\numberwithin{theorem}{section}
\crefname{assumption}{Assumption}{Assumptions}
\crefname{remark}{Remark}{Remarks}
\crefname{example}{Example}{Examples}
\title{Parallel subspace correction methods for semicoercive and nearly semicoercive convex optimization with applications to nonlinear PDEs\thanks{Submitted to arXiv.
\funding{Young-Ju Lee's work was supported by NSF-DMS 2208499.}
}}
\author{
Young-Ju Lee\thanks{Department of Mathematics, Texas State University, San Marcos, TX 78666, USA
  (\email{yjlee@txstate.edu}).}
\and
Jongho Park\thanks{Applied Mathematics and Computational Sciences Program, Computer, Electrical and Mathematical Science and Engineering Division, King Abdullah University of Science and Technology~(KAUST), Thuwal 23955, Saudi Arabia
 (\email{jongho.park@kaust.edu.sa}).}
}
\begin{document}

\maketitle
\begin{abstract}
We present new convergence analyses for parallel subspace correction methods for unconstrained semicoercive and nearly semicoercive convex optimization problems, generalizing the theory of singular and nearly singular linear problems to a class of nonlinear problems.
Our results demonstrate that the elegant theoretical framework developed for singular and nearly singular linear problems can be extended to unconstrained semicoercive and nearly semicoercive convex optimization problems.
For semicoercive problems, we show that the convergence rate can be estimated in terms of a seminorm stable decomposition over the subspaces and the kernel of the problem, aligning with the theory for singular linear problems.
For nearly semicoercive problems, we establish a parameter-independent convergence rate, assuming the kernel of the semicoercive part can be decomposed into a sum of local kernels, which aligns with the theory for nearly singular problems.
To demonstrate the applicability of our results, we provide convergence analyses of two-level additive Schwarz methods for solving certain nonlinear partial differential equations with Neumann boundary conditions, within the proposed abstract framework.
\end{abstract}

\begin{keywords}
Semicoercive problems, Nearly semicoercive problems, Subspace correction methods, Convex optimization, Nonlinear PDEs
\end{keywords}

\begin{AMS}
65J20, 
65N20, 
65N55, 
90C22, 
90C25 
\end{AMS}

\section{Introduction}
\label{Sec:Introduction}
Many important linear problems arise in science and engineering as either singular or nearly singular. These problems can be characterized as systems, which have a nontrivial null space or near null space and they appear in various applications, which include finite element discretizations of the Poisson equation with pure Neumann boundary conditions, and/or variational problems of $H(\operatorname{div})$ and $H(\operatorname{curl})$~\cite{AFW:2000,BL:2005}. An important class of nearly singular problems can also be found at nearly incompressible linear elastic equations~\cite{LWC:2009}. Nearly singular problems also occur when solving indefinite systems arising from mixed finite element discretizations of the Navier--Stokes equations~\cite{FMW:2019}, as well as in more complex systems such as non-Newtonian fluids~\cite{LX:2006} and fluid--structure interaction problems~\cite{XY:2015}. In particular, the nearly incompressible linear elasticity problem arises as a subproblem to be solved \cite{LWC:2009},
when the augmented Lagrangian Uzawa method is employed~\cite{LWXZ:2007}.  

Due to the significance of singular and nearly singular linear problems in scientific computing, as highlighted by the numerous examples discussed above, there has been extensive research on numerical methods for solving these problems.
The theory of basic iterative methods for singular problems was first introduced in~\cite{Keller:1965}, with more refined results presented in later works such as~\cite{Cao:2008,LWXZ:2006,WLXZ:2008}.
In addition, subspace correction methods~\cite{Xu:1992,XZ:2002}, which offer a general framework for a variety of iterative methods, ranging from basic methods to advanced ones like domain decomposition and multigrid methods, were rigorously analyzed for singular linear problems in~\cite{LWXZ:2008,WLXZ:2008} and for nearly singular linear problems in~\cite{LWXZ:2007,WZ:2014}. Building on the theory of subspace correction methods, several applications have been developed for the specific examples discussed above; see, for instance,~\cite{FMW:2019,LWC:2009,XY:2015}.

A natural generalization of the concept of singularity in linear problems to convex optimization problems is semicoercivity~\cite{AET:2001,GG:2002}.
Intuitively, a convex functional is said to be semicoercive if it is flat along some subspace and increases to infinity in the other directions; the rigorous definition of semicoercivity can be found in~\cite{AET:2001}, as well as in \cref{Sec:Seminorm}.
In the special case of quadratic functionals, semicoercivity is equivalent to the singularity of the corresponding linear problem.
As a result, semicoercive problems, like singular ones, arise frequently in various nonlinear applications.
Consequently, there has been some research on efficient numerical solvers for particular semicoercive problems, such as~\cite{DFH:2023,DHS:2007}.

The goal of this paper is to extend the well-established theory of subspace correction methods for singular and nearly singular linear problems~\cite{LWXZ:2007,LWXZ:2008,WLXZ:2008,WZ:2014} to semicoercive and nearly semicoercive convex optimization problems.
This extension is motivated by recent developments in the convergence theory of subspace correction methods for convex optimization.
The framework of subspace correction methods for convex optimization was originally introduced in the foundational works~\cite{TE:1998,TX:2002}, which developed an abstract convergence theory and applied it to problems such as the $s$-Laplacian.
Other notable early results include~\cite{BK:2012,BTW:2003,Carstensen:1997}, while more refined and general convergence theories have been developed recently in~\cite{Park:2020,Park:2022a,PX:2024}.
These theories have also been successfully applied to a variety of nonlinear partial differential equations (PDEs) and variational inequalities~\cite{BF:2024,LP:2024,Park:2024b,Park:2024a}.
Despite these developments, there has been no prior work addressing subspace correction methods for semicoercive or nearly semicoercive problems. All existing theoretical analyses assume coercivity or stronger conditions such as uniform convexity.

In this paper, we present new convergence analyses for subspace correction methods for semicoercive and nearly semicoercive convex optimization problems in Banach spaces.
Roughly speaking, the main results of this paper combine the theories for singular and nearly singular problems~\cite{LWXZ:2007,LWXZ:2008,WZ:2014} with those for convex optimization problems~\cite{Park:2020,PX:2024} for subspace correction methods.
For semicoercive problems, we prove that the convergence rate of a subspace correction method can be estimated in terms of a seminorm stable decomposition over the subspaces and the kernel of the problem.
This aligns with the theory for singular linear problems established in~\cite{LWXZ:2008,WLXZ:2008}.
The analysis is analogous to that for coercive problems, except that special care must be taken with the kernel of the problem.
For nearly semicoercive problems, in the same spirit as in~\cite{LWXZ:2007}, we obtain a parameter-independent convergence rate estimate for subspace correction methods, under the assumption that the kernel of the semicoercive part, i.e., the near null space, can be decomposed into a sum of local near null spaces.
We note that the analysis of nearly singular linear problems in~\cite{LWXZ:2007} relies heavily on orthogonality in Hilbert spaces.
However, such orthogonality does not directly apply in our setting, as we are dealing with nonlinear problems posed in Banach spaces.
Therefore, we carefully extend the theory of nearly singular linear problems to nearly semicoercive convex optimization problems by employing nonlinear orthogonal decompositions in Banach spaces, which were introduced in~\cite{Alber:2000,Alber:2005}.
As examples of applications of the convergence theory developed in this paper, we provide convergence analyses of two-level additive Schwarz methods for solving a Neumann boundary value problem involving the $s$-Laplacian~\cite{LP:2024,Loisel:2020} and a related problem.

The rest of this paper is organized as follows.
In \cref{Sec:Seminorm}, we review the notion of semicoercivity and provide a characterization in terms of seminorms.
In \cref{Sec:SC}, we review subspace correction methods for convex optimization.
In \cref{Sec:Semicoercive}, we present a convergence analysis of subspace correction methods for semicoercive convex optimization.
In \cref{Sec:Nearly}, we present a convergence analysis for subspace correction methods for nearly semicoercive convex optimization.
In \cref{Sec:Applications}, we present several applications of the proposed convergence theory to domain decomposition methods for nonlinear PDEs.
In \cref{Sec:Conclusion}, we conclude the paper with remarks.

\section{Semicoercive functionals}
\label{Sec:Seminorm}
In this section, we present the notion of semicoercive functionals~\cite{AET:2001,GG:2002} and their characterization in terms of seminorms.
Additionally, we explore the relation between semicoercive convex functionals and semidefinite linear problems~\cite{LWXZ:2006,WLXZ:2008}.

We first recall the definition of semicoercive functionals introduced in~\cite{AET:2001}.
Let $V$ be a reflexive Banach space equipped with a norm $\| \cdot \|$. 
A proper functional $F \colon V \rightarrow \overline{\mathbb{R}}$ is said to be \textit{semicoercive} if there exists a closed subspace $\mathcal{N}$ of $V$ such that
\begin{equation}
\label{Def1:semicoercive}
F(v) = F(v + \phi)
\quad \forall v \in V, \text{ } \phi \in \mathcal{N},
\end{equation}
and the quotient functional $\bar{F} \colon V / \mathcal{N} \rightarrow \overline{\mathbb{R}}$ defined by
\begin{equation*}
\bar{F} (v + \mathcal{N}) = F (v),
\quad v \in V,
\end{equation*}
is coercive in the sense that
\begin{equation}
\label{Def2:semicoercive}
\bar{F} (v + \mathcal{N}) \rightarrow \infty 
\quad \text{ as } \| v + \mathcal{N} \|_{V/\mathcal{N}} \rightarrow \infty,
\end{equation}
where $\| \cdot \|_{V/\mathcal{N}}$ denotes the quotient norm given by
\begin{equation*}
\| v + \mathcal{N} \|_{V / \mathcal{N}} = \inf_{\phi \in \mathcal{N}} \| v + \phi \|,
\quad v \in V.
\end{equation*}
If such a subspace $\mathcal{N}$ exists, then it can be easily shown that $\mathcal{N}$ is unique.
We refer to this subspace as the \textit{kernel} of $F$, denoted by $\mathcal{N} = \ker F$. 

As described above, semicoercivity is defined in terms of a quotient space.
However, when developing the convergence theory for subspace correction methods and its applications, it is more convenient to work with seminorms rather than quotient spaces.
In \cref{Lem:seminorm}, we present a condition under which a seminorm can be characterized as a quotient norm.
Given a seminorm $| \cdot |$ on $V$, we define the \textit{kernel} $\ker | \cdot |$ of $| \cdot |$ as~(cf.~\cite{GG:2002})
\begin{equation*}
\ker | \cdot | = \left\{ v \in V : | v | = 0 \right\}.
\end{equation*}

\begin{lemma}
\label{Lem:seminorm}
Let $V$ be a Banach space.
A seminorm $| \cdot |$ on $V$ is equivalent to the quotient norm $\| \cdot + \mathcal{N} \|_{V / \mathcal{N}}$ for some closed subspace $\mathcal{N}$ of $V$ if and only if it satisfies the following:
\begin{enumerate}[(i)]
\item $| \cdot |$ is continuous on $V$.
\item There exists a positive constant $C$ such that
\begin{equation}
\label{Lem1:seminorm}
\inf_{\phi \in \ker |\cdot|} \| v + \phi \| \leq C | v |
\quad \forall v \in V.
\end{equation}
\end{enumerate}
\end{lemma}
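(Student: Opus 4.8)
The plan is to prove the two implications separately, the guiding observation being that if $|\cdot|$ is equivalent to $\|\cdot + \mathcal{N}\|_{V/\mathcal{N}}$ for \emph{some} closed subspace $\mathcal{N}$, then necessarily $\mathcal{N} = \ker|\cdot|$ (take kernels of both sides of the equivalence and use that $\mathcal{N}$ is closed). Thus the subspace is forced, and the whole statement reduces to comparing $|\cdot|$ with the single candidate quotient norm attached to $\ker|\cdot|$.

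For the forward direction, assume $c|v| \le \|v+\mathcal{N}\|_{V/\mathcal{N}} \le C|v|$ for all $v \in V$ with $c,C>0$. As just noted, $\ker|\cdot| = \mathcal{N}$. The quotient norm is $1$-Lipschitz with respect to $\|\cdot\|$ and hence continuous; combining $|v| \le c^{-1}\|v+\mathcal{N}\|_{V/\mathcal{N}}$ with the reverse triangle inequality for the seminorm shows $|\cdot|$ is Lipschitz, hence continuous, which is (i). Condition (ii) is then merely the right-hand inequality rewritten: $\inf_{\phi \in \ker|\cdot|} \|v+\phi\| = \|v+\mathcal{N}\|_{V/\mathcal{N}} \le C|v|$.

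For the converse, assume (i) and (ii) and set $\mathcal{N} := \ker|\cdot|$. Since $|\cdot|$ is a seminorm, $\mathcal{N}$ is a linear subspace, and by (i) it is closed, being the preimage of $\{0\}$ under a continuous map. With this choice $\|v+\mathcal{N}\|_{V/\mathcal{N}} = \inf_{\phi \in \ker|\cdot|}\|v+\phi\|$, so (ii) is exactly the bound $\|v+\mathcal{N}\|_{V/\mathcal{N}} \le C|v|$. For the reverse bound I would first extract from continuity of $|\cdot|$ at the origin a constant $K>0$ with $|v| \le K\|v\|$ for all $v$: choose $\delta>0$ so that $\|w\| \le \delta$ implies $|w| \le 1$, and rescale using positive homogeneity of the seminorm. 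Then, because $|\phi| = 0$ for every $\phi \in \mathcal{N}$, we get $|v| = |v+\phi| \le K\|v+\phi\|$ for all $\phi \in \mathcal{N}$; taking the infimum over $\phi \in \mathcal{N}$ yields $|v| \le K\|v+\mathcal{N}\|_{V/\mathcal{N}}$. Together with the bound from (ii), this establishes the equivalence.

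The only step that is more than pure bookkeeping is the passage from continuity of $|\cdot|$ to the global estimate $|v| \le K\|v\|$, which relies on positive homogeneity; everything else follows directly from the definition of the quotient norm and the identification $\mathcal{N} = \ker|\cdot|$. I do not expect any real obstacle here — in particular, neither reflexivity nor completeness of $V$ is used in this lemma.
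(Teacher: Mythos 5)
Your proof is correct and follows essentially the same route as the paper's: identify $\mathcal{N}$ as $\ker|\cdot|$, use (ii) for one inequality, and derive the other from continuity of the seminorm combined with $|v|=|v+\phi|$ for $\phi\in\ker|\cdot|$. You simply spell out the details the paper leaves implicit (that $\ker|\cdot|$ is closed by (i), that continuity plus homogeneity yields a global bound $|v|\le K\|v\|$, and that $\mathcal{N}$ is forced to equal $\ker|\cdot|$ in the forward direction).
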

\begin{proof}
Suppose that we have a seminorm $| \cdot |$ on $V$ that is equivalent to the quotient norm $\| \cdot + \mathcal{N} \|_{V/ \mathcal{N}}$ for some closed subspace $\mathcal{N}$ of $V$.
It follows directly that $\mathcal{N} = \ker | \cdot | $, making both~(i) and~(ii) straightforward.

Conversely, let $| \cdot |$ be a seminorm on $V$ that satisfies (i) and~(ii).
We set $\mathcal{N} = \ker | \cdot |$.
Due to the continuity of $| \cdot |$, we have
\begin{equation}
\label{Lem2:seminorm}
|v | = | v + \phi | \leq C' \| v + \phi \|
\end{equation}
for any $\phi \in \mathcal{N}$, where $C'$ is a positive constant.
By combining~\eqref{Lem1:seminorm} and~\eqref{Lem2:seminorm}, we deduce that $| \cdot |$ is equivalent to $\| \cdot + \mathcal{N} \|_{V / \mathcal{N}}$.
\end{proof}

\begin{remark}
\label{Rem:BH}
The condition~\eqref{Lem1:seminorm} is satisfied by many seminorms commonly encountered in PDEs.
For instance, the Bramble--Hilbert lemma~\cite{BH:1970} ensures that if $V = W^{m,p} (\Omega)$, where $m \in \mathbb{Z}_{> 0}$, $p \in [1, \infty]$, and the domain $\Omega \subset \mathbb{R}^d$ satisfies certain geometric conditions~\cite{DL:2004,DS:1980}, then~\eqref{Lem1:seminorm} holds for the Sobolev seminorm $| \cdot |_{W^{m,p} (\Omega)}$.
\end{remark}

Thanks to \cref{Lem:seminorm}, we are able to characterize semicoercivity in terms of seminorms; see \cref{Prop:semicoercive}.

\begin{proposition}
\label{Prop:semicoercive}
Let $V$ be a reflexive Banach space. A proper functional $F \colon V \rightarrow \overline{\mathbb{R}}$ is semicoercive if and only if there exists a continuous seminorm $| \cdot |$ on $V$ that satisfies~\eqref{Lem1:seminorm} and the following:
\begin{enumerate}[(i)]
\item $F(v) = F(v + \phi)$ for any $v \in V$ and $\phi \in \ker | \cdot |$.
\item $F(v) \rightarrow \infty$ as $| v | \rightarrow \infty$.
\end{enumerate}
\end{proposition}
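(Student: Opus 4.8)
The plan is to prove the two implications separately, using \cref{Lem:seminorm} in each case as the bridge between seminorms and quotient norms.

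For the forward direction, I would start from a semicoercive $F$ with kernel $\mathcal{N} = \ker F$ and take $|v| := \|v + \mathcal{N}\|_{V/\mathcal{N}}$. First I would check that this is a continuous seminorm on $V$: subadditivity and absolute homogeneity are inherited from the quotient norm, and continuity follows from the bound $|v| \le \|v\|$. Next I would identify $\ker |\cdot|$ with $\mathcal{N}$; here the closedness of $\mathcal{N}$ is exactly what guarantees that $\|v + \mathcal{N}\|_{V/\mathcal{N}} = 0$ forces $v \in \mathcal{N}$. Then \eqref{Lem1:seminorm} holds with $C = 1$ (in fact with equality), condition~(i) is precisely the invariance \eqref{Def1:semicoercive}, and condition~(ii) is a restatement of the coercivity \eqref{Def2:semicoercive} of $\bar{F}$ via the identities $F(v) = \bar{F}(v + \mathcal{N})$ and $|v| = \|v + \mathcal{N}\|_{V/\mathcal{N}}$.

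For the converse, I would take a continuous seminorm $|\cdot|$ satisfying \eqref{Lem1:seminorm}, (i) and~(ii), and set $\mathcal{N} := \ker |\cdot|$. Continuity of $|\cdot|$ makes $\mathcal{N}$ a closed subspace of $V$. Property~(i) says that $F$ is constant on the cosets of $\mathcal{N}$, so the quotient functional $\bar{F}$ on $V / \mathcal{N}$ is well defined with $F(v) = \bar{F}(v + \mathcal{N})$, and \eqref{Def1:semicoercive} is immediate. It remains to show that $\bar{F}$ is coercive. By \cref{Lem:seminorm}, continuity of $|\cdot|$ together with \eqref{Lem1:seminorm} implies that $|\cdot|$ is equivalent to $\|\cdot + \mathcal{N}\|_{V/\mathcal{N}}$; hence $|v| \to \infty$ if and only if $\|v + \mathcal{N}\|_{V/\mathcal{N}} \to \infty$. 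Combining this equivalence with~(ii) yields $\bar{F}(v + \mathcal{N}) \to \infty$ as $\|v + \mathcal{N}\|_{V/\mathcal{N}} \to \infty$, i.e., $\bar{F}$ is coercive, so $F$ is semicoercive; by the uniqueness noted after \eqref{Def2:semicoercive} we moreover get $\ker F = \mathcal{N}$.

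I do not expect a serious obstacle, as the argument is largely bookkeeping once \cref{Lem:seminorm} is invoked. The one point requiring care is the translation of the two ``tends to infinity'' conditions: I must use the equivalence of $|\cdot|$ and the quotient norm in both directions (each bounded by a constant multiple of the other) so that the divergence conditions are genuinely interchangeable, and in the forward direction I must check that $\ker |\cdot|$ is exactly $\mathcal{N}$ rather than a strictly larger subspace, which is where the closedness of $\ker F$ enters.
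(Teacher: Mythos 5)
Your argument is correct and follows the same route as the paper: in the forward direction you take $|v| = \|v+\mathcal{N}\|_{V/\mathcal{N}} = \inf_{\phi\in\ker F}\|v+\phi\|$ and verify the listed properties, and in the converse you set $\mathcal{N}=\ker|\cdot|$ and invoke \cref{Lem:seminorm} to transfer the coercivity condition between $|\cdot|$ and the quotient norm. You simply fill in the verifications (closedness of $\ker|\cdot|$, identification $\ker|\cdot|=\mathcal{N}$, equivalence of the two divergence conditions) that the paper's terse proof leaves to the reader.
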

\begin{proof}
If we have a semicoercive functional $F$, then the seminorm $| \cdot |$ defined as
\begin{equation*}
    |v| = \inf_{\phi \in \ker F} \| v + \phi \|,
    \quad v \in V,
\end{equation*}
is continuous and satisfies~\eqref{Lem1:seminorm}, (i) and~(ii).
Conversely, if we have a continuous seminorm $| \cdot |$ on $V$ that satisfies~\eqref{Lem1:seminorm}, (i), and~(ii), then we can readily deduce, using \cref{Lem:seminorm}, that $F$ is semicoercive with the kernel $\ker F = \ker | \cdot |$.
\end{proof}

We conclude this section by demonstrating that minimizing semicoercive and convex energy functionals generalizes solving semidefinite linear problems.
Let $H$ be a Hilbert space equipped with an inner product $( \cdot, \cdot )$.
We consider the following semidefinite linear problem:
\begin{equation}
\label{semidefinite}
A u = f,
\end{equation}
where $A \colon H \rightarrow H$ is a continuous, symmetric and positive semidefinite linear operator satisfying the condition~(cf.~\cite[Equation~(2.3)]{LWXZ:2008})
\begin{equation*}
(A v, v) \geq \mu \| v + \operatorname{ker} A \|_{V  / \operatorname{ker} A}^2, \quad v \in V,
\end{equation*}
for some $\mu > 0$, and $f \in \operatorname{ran} A$.
It is straightforward to verify that $u \in H$ solves~\eqref{semidefinite} if and only if it minimizes the following quadratic energy functional:
\begin{equation}
\label{semidefinite_energy}
F(v) = \frac{1}{2} ( Av, v ) - ( f, v ),
\quad v \in H.
\end{equation}
That is, the semidefinite linear problem~\eqref{semidefinite} is equivalent to the minimization problem given by~\eqref{semidefinite_energy}.
Clearly, the energy functional $F$ in~\eqref{semidefinite_energy} is convex and semicoercive with the kernel $\ker F = \ker A$.
Hence, we conclude that the semidefinite linear problem~\eqref{semidefinite} is a special case of semicoercive convex optimization.

\section{Subspace correction methods for convex optimization}
\label{Sec:SC}
In this section, we briefly summarize subspace correction methods for convex optimization, which have been extensively studied in the literature, e.g.,~\cite{Park:2020,PX:2024,TE:1998,TX:2002}.
For simplicity, we focus on the case of exact local problems only; the case of inexact local problems~\cite{Park:2020,PX:2024} will be considered in \cref{App:Local}.

We consider the following abstract convex optimization problem on a reflexive Banach space $V$:
\begin{equation}
\label{model}
    \min_{v \in V} F(v),
\end{equation}
where $F \colon V \rightarrow \mathbb{R}$ is a G\^{a}teaux differentiable and convex functional.
We can readily verify that the problem~\eqref{model} admits a solution~(may not be unique) $u \in V$ if $F$ is semicoercive.

We assume that the solution space $V$ of~\eqref{model} admits a space decomposition
\begin{equation}
\label{space_decomp}
    V = \sum_{j=1}^N V_j,
\end{equation}
where each $V_j$, $j \in [N] = \{ 1, 2, \dots, N \}$, is a closed subspace of $V$.
One important property of the space decomposition~\eqref{space_decomp} is the stable decomposition property.
Namely, we have
\begin{equation}
\label{norm_stable}
\sup_{\| w \| = 1} \inf_{\sum_{j=1}^N w_j = w} \left( \sum_{j=1}^N \| w_j \|^q \right)^{\frac{1}{q}} < \infty,
\end{equation}
for any $q \in [1, \infty)$, where $w$ and $w_j$ are taken from $V$ and $V_j$, respectively.
This property follows directly from the open mapping theorem~(see~\cite[Equation~(2.15)]{XZ:2002} and~\cite[Equation~(2)]{PX:2024}).

Meanwhile, the convexity of the energy functional $F$ in~\eqref{model} implies the following inequality, known as the strengthened convexity condition~\cite[Assumption~4.2]{Park:2020}, holds for some $\tau > 0$:
\begin{equation}
\label{convexity}
(1 - \tau N) F(v) + \tau \sum_{j=1}^N F(v+ w_j) \geq F \left( v + \tau \sum_{j=1}^N w_j \right), \text{ } v \in V, \text{ } w_j \in V_j.
\end{equation}
A positive constant $\tau_0$ is defined as the maximum $\tau$ that satisfies the strengthened convexity condition, i.e.,
\begin{equation}
\label{tau_0}
\tau_0 = \max \left\{ \tau > 0 : \text{The inequality~\eqref{convexity} holds} \right\}.
\end{equation}
Then it is clear that~\eqref{convexity} holds for every $\tau > 0$ less than or equal to $\tau_0$.  
While we have a trivial estimate $\tau_0 \geq 1/N$, in many applications, better estimates for $\tau_0$, which are often independent of $N$, can be obtained using a coloring argument~\cite{Park:2020,TX:2002}.
One may refer to~\cite[Section~4.1]{Park:2020} for a discussion on the relation between~\eqref{convexity} and strengthened Cauchy--Schwarz inequalities, which plays a crucial role in the analysis of multilevel methods for linear problems~\cite{TW:2005,Xu:1992}.

Subspace correction methods involve local problems defined in the subspaces $\{ V_j \}_{j=1}^N$.
Given $v \in V$, the optimal residual in the subspace $V_j$, which the energy functional $F$, is obtained by solving the minimization problem
\begin{equation}
\label{local_exact}
\min_{w_j \in V_j} \left\{ F_j (w_j; v) := F(v + w_j) \right\}.
\end{equation}
The parallel subspace correction method, also known as the additive Schwarz method in the literature on domain decomposition methods, for solving~\eqref{model}  with the local problem~\eqref{local_exact} is presented in \cref{Alg:PSC}.
The upper bound $\tau_0$ for the step size $\tau$ was given in~\eqref{tau_0}.
We note that \cref{Alg:PSC} was originally introduced in~\cite{TE:1998,TX:2002} and 
has since been extensively studied; see~\cite{Park:2020} and the references therein.

\begin{algorithm}
\caption{Parallel subspace correction method}
\begin{algorithmic}[]
\label{Alg:PSC}
\STATE Given the step size $\tau \in (0, \tau_0]$:
\STATE Choose $u^{(0)} \in V$.
\FOR{$n=0,1,2,\dots$}
    \FOR{$j \in [N]$ \textbf{in parallel}}
        \STATE $\displaystyle
            w_j^{(n+1)} \in \operatornamewithlimits{\arg\min}_{w_j \in V_j} F_j (w_j; u^{(n)})
        $
    \ENDFOR
    \STATE $\displaystyle
    u^{(n+1)} = u^{(n)} + \tau \sum_{j=1}^N w_j^{(n+1)}
    $
\ENDFOR
\end{algorithmic}
\end{algorithm}

\begin{remark}
\label{Rem:acceleration}
Incorporating acceleration schemes designed for first-order methods for convex optimization~(see, e.g.,~\cite{Nesterov:2013}) into \cref{Alg:PSC} yields accelerated variants of the parallel subspace correction method~\cite{Park:2021,Park:2022a}.
These accelerated methods typically exhibit faster convergence compared to the unaccelerated version, with only a marginal increase in computational cost per iteration.
However, a detailed discussion of these accelerated methods is beyond the scope of this paper.
\end{remark}

Another type of subspace correction method, known as the successive subspace correction method (also referred to as the multiplicative Schwarz method), involves solving the local problems sequentially.
In this paper, however, we focus on the parallel subspace correction method.



\section{Convergence analysis for semicoercive problems}
\label{Sec:Semicoercive}
In this section, we present a new convergence theory for subspace correction methods for semicoercive convex optimization.
Throughout this section, we assume that the energy functional $F$ in the problem~\eqref{model} is semicoercive with respect to a seminorm $| \cdot |$ in the sense of \cref{Prop:semicoercive}, and denote $\mathcal{N} = \ker F = \ker | \cdot |$.
To this end, we derive convergence rate estimates for the parallel subspace correction in terms of a seminorm stable decomposition over the subspaces $\{ V_j \}$ and the kernel $\mathcal{N}$. 
These results align with the sharp theory of singular linear problems established in~\cite{LWXZ:2008,WLXZ:2008}.

\begin{remark}
\label{Rem:local_kernel}
Even if the energy functional $F$ is semicoercive with a nontrivial kernel, the kernel of each local problem in subspace correction methods may still be trivial, i.e., $V_j \cap \ker F = \{ 0 \}$.
\end{remark}

\subsection{Descent property}
We first show that the parallel subspace correction method for solving the semicoercive problem~\eqref{model} achieves a certain descent property on the energy.
This descent property will play a central role in analyzing the convergence rate.

Let $V^*$ be the topological dual space of the reflexive Banach space $V$.
Given $v \in V$ and $w_j \in V_j$, we denote the Bregman distance associated with $F$ between $v + w_j$ and $v$ by $d_j (w_j; v)$, i.e.,
\begin{equation}
\label{d_j}
d_j (w_j; v) = F_j (w_j; v) - F(v) - \langle F'(v), w_j\rangle,
\quad v\in V, \text{ } w_j \in V_j,
\end{equation}
where $F'(v) \in V^*$ is the G\^{a}teaux derivative of $F$ at $v$, and $\langle \cdot, \cdot \rangle$ represents the duality pairing on $V$.

In \cref{Lem:ASM}, we present a generalized additive Schwarz lemma~\cite{LP:2024,Park:2020,PX:2024} for semicoercive problems, which states that the parallel subspace correction method can be viewed as a gradient descent method endowed with a specific nonlinear metric-like functional.

\begin{lemma}[generalized additive Schwarz lemma]
\label{Lem:ASM}
Let $V$ be a reflexive Banach space, and let $F \colon V \rightarrow \mathbb{R}$ be a G\^{a}teaux differentiable, convex, and semicoercive functional with the kernel $\mathcal{N}$.
For $v \in V$, we have
\begin{equation}
\label{Lem1:ASM}
\hat{w} :=  \sum_{j=1}^N \hat{w}_j \in \operatornamewithlimits{\arg\min}_{w \in V} \left\{ \langle F'(v) ,w \rangle + \inf_{\phi \in \mathcal{N}} \inf_{\sum_{j=1}^N w_j = w + \phi} \sum_{j=1}^N d_j (w_j; v) \right\}, 
\end{equation}
where $\hat{w}_j \in V_j$, $j \in [N]$, is given by
\begin{equation}
\label{hat_w}
\hat{w}_j \in \operatornamewithlimits{\arg\min}_{w_j \in V_j} F(w_j; v)
= \operatornamewithlimits{\arg\min}_{w_j \in V_j} \left\{ \langle F'(v), w_j \rangle + d_j (w_j; v) \right\}.
\end{equation}
Moreover, we have
\begin{equation}
\label{Lem2:ASM}
\inf_{\phi \in \mathcal{N}} \inf_{\sum_{j=1}^N w_j = \hat{w} + \phi} \sum_{j=1}^N d_j (w_j; v)
= \sum_{j=1}^N d_j(\hat{w}_j; v).
\end{equation}
\end{lemma}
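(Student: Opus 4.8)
The plan is to verify the optimality condition for the minimization problem on the right-hand side of~\eqref{Lem1:ASM} directly, exploiting that the objective splits into a linear term plus a sum of Bregman distances, and that each $\hat{w}_j$ is by~\eqref{hat_w} the minimizer of the $j$-th local problem. First I would record the elementary fact that, since $F$ is semicoercive with kernel $\mathcal{N}$, we have $F(v+\phi)=F(v)$ for all $\phi\in\mathcal{N}$, and consequently $\langle F'(v),\phi\rangle = 0$ for every $\phi\in\mathcal{N}$ and every $v\in V$ (differentiate $t\mapsto F(v+t\phi)$). This means the linear functional $w\mapsto\langle F'(v),w\rangle$ descends to the quotient $V/\mathcal{N}$, so that adding an element of $\mathcal{N}$ to the argument $w$ does not change the linear term; only the inner double-infimum is affected by the $\phi$-shift. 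This observation is what makes the kernel $\mathcal{N}$ ``free'' in~\eqref{Lem1:ASM}, exactly as in the singular linear theory.

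Next I would prove the identity~\eqref{Lem2:ASM}, since it is the crux and~\eqref{Lem1:ASM} will follow from it. For the ``$\leq$'' direction, take $\phi=0$ and the decomposition $w_j=\hat{w}_j$, which is admissible for $\sum_j w_j = \hat{w}$; this gives $\inf\inf\sum_j d_j(w_j;v)\le\sum_j d_j(\hat{w}_j;v)$. For the reverse inequality, I would argue that for \emph{any} admissible decomposition $\sum_{j=1}^N w_j = \hat{w}+\phi$ with $\phi\in\mathcal{N}$, one has $\sum_j d_j(w_j;v)\ge\sum_j d_j(\hat{w}_j;v)$. The key is that each $\hat{w}_j$ minimizes $w_j\mapsto F(v+w_j) = \langle F'(v),w_j\rangle + d_j(w_j;v)$ over $V_j$, so by convexity and the first-order optimality condition, $\langle F'(v+\hat{w}_j), z_j\rangle \ge 0$ — more usefully, $d_j(w_j;v)-d_j(\hat{w}_j;v) \ge \langle F'(v),\hat{w}_j - w_j\rangle + (\text{something nonnegative})$ via the Bregman-distance form of optimality. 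Summing over $j$, the terms $\sum_j\langle F'(v),\hat{w}_j-w_j\rangle = \langle F'(v),\hat{w} - (\hat{w}+\phi)\rangle = -\langle F'(v),\phi\rangle = 0$ by the kernel observation above, so the cross terms cancel and we are left with $\sum_j d_j(w_j;v)\ge\sum_j d_j(\hat{w}_j;v)$, as desired. This establishes~\eqref{Lem2:ASM}.

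Finally, for~\eqref{Lem1:ASM} itself, I would show that for every $w\in V$ the value of the objective at $w$ is at least its value at $\hat{w}$. Fix $w\in V$ and let $w_j\in V_j$ with $\sum_j w_j = w+\phi$ be near-optimal for the inner infima. Using that $\langle F'(v),\cdot\rangle$ kills $\mathcal{N}$, the objective at $w$ equals $\langle F'(v),\sum_j w_j\rangle + \sum_j d_j(w_j;v) = \sum_j\big(\langle F'(v),w_j\rangle + d_j(w_j;v)\big) = \sum_j F(v+w_j) \ge \sum_j F(v+\hat{w}_j)$ by the defining minimality of $\hat{w}_j$ in~\eqref{hat_w}; and by the same computation run backwards together with~\eqref{Lem2:ASM}, $\sum_j F(v+\hat{w}_j)$ equals the objective evaluated at $\hat{w}$. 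Hence $\hat{w}$ is a minimizer. I would then note that the near-optimal decomposition exists because the local minimizers $\hat{w}_j$ furnish an admissible choice, so the infima are actually attained.

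The main obstacle I anticipate is the reverse inequality in~\eqref{Lem2:ASM}: one must carefully use the first-order optimality of each local minimizer $\hat{w}_j$ in Bregman form and confirm that the leftover nonnegative remainder is genuinely nonnegative under mere convexity and G\^ateaux differentiability (no strict convexity), and that the cancellation of the $F'(v)$-cross-terms really only needs $\langle F'(v),\phi\rangle=0$. Everything else is bookkeeping that parallels the coercive generalized additive Schwarz lemma of~\cite{Park:2020,PX:2024}, with $\mathcal{N}$ carried along harmlessly.
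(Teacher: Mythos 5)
Your proposal is correct and is, at its core, the same argument as the paper's: local minimality of each $\hat{w}_j$, linearity of $\langle F'(v),\cdot\rangle$, and the identity $\langle F'(v),\phi\rangle=0$ for $\phi\in\mathcal{N}$ are the three ingredients in both. The paper organizes the argument as a single chain
\[
\langle F'(v),\hat w\rangle + d(\hat w;v)\;\le\;\sum_j\bigl(\langle F'(v),\hat w_j\rangle + d_j(\hat w_j;v)\bigr)\;\le\;\langle F'(v),w\rangle + d(w;v),
\]
where the first inequality is the admissibility of $(\hat w_j)_j$ with $\phi=0$, the second is local optimality plus the linear cancellation, and then sets $w=\hat w$ to obtain \eqref{Lem2:ASM}; you instead prove \eqref{Lem2:ASM} first as a standalone fact and deduce \eqref{Lem1:ASM} from it, which introduces a mild redundancy since the same optimality inequality gets invoked twice. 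A small point: the ``something nonnegative'' you anticipate in the reverse inequality of \eqref{Lem2:ASM} is not actually needed---the scalar minimality $\langle F'(v),\hat w_j\rangle + d_j(\hat w_j;v)\le\langle F'(v),w_j\rangle + d_j(w_j;v)$ already gives $d_j(w_j;v)-d_j(\hat w_j;v)\ge\langle F'(v),\hat w_j-w_j\rangle$ directly, without passing through the first-order condition at $v+\hat w_j$ or decomposing into a Bregman remainder (and since $V_j$ is a subspace, that first-order condition is an equality, not an inequality, anyway). The ``obstacle'' you flag therefore does not exist, and your remark about the infima being attained because $(\hat w_j)_j$ is admissible only applies to $d(\hat w;v)$, not to $d(w;v)$ for general $w$; a standard $\varepsilon$-optimal decomposition (or, as in the paper, simply passing to the infimum after the chain of inequalities) closes that step cleanly.
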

\begin{proof}
Throughout the proof, we write
\begin{equation*}
d(w; v) = \inf_{\phi \in \mathcal{N}} \inf_{\sum_{j=1}^N w_j = w + \phi} d_j (w_j; v),
\quad w \in V.
\end{equation*}
We take any $w \in V$.
For any $\phi \in \mathcal{N}$ and $w_j \in V_j$, $j \in [N]$, such that $w = \sum_{j=1}^N w_j + \phi$, we get
\begin{equation}
\label{Lem3:ASM}
\begin{split}
\langle F'(v), \hat{w} \rangle + d(\hat{w}; v)
&\leq \sum_{j=1}^N \left( \langle F'(v), \hat{w}_j \rangle + d_j (\hat{w}_j; v) \right) \\
&\stackrel{\eqref{hat_w}}\leq \sum_{j=1}^N \left( \langle F'(v), w_j \rangle + d_j (w_j; v) \right) \\
&= \langle F'(v), w \rangle + \sum_{j=1}^N d_j (w_j; v).
\end{split}
\end{equation}
where the first inequality holds because $\hat{w} = 0 + \sum_{j=1}^N \hat{w}_j \in \mathcal{N} + \sum_{j=1}^N V_j$, and the last equality follows from $\langle F'(v), \phi \rangle = 0$.
By minimizing the last line of~\eqref{Lem3:ASM} over all $(w_j)_{j=1}^N$ and $\phi$, we obtain
\begin{equation}
\label{Lem4:ASM}
\langle F'(v), \hat{w} \rangle + d( \hat{w}; v)
\leq \sum_{j=1}^N \left( \langle F'(v), \hat{w}_j \rangle + d_j (\hat{w}_j; v) \right)
\leq \langle F'(v), w \rangle + d(w; v),
\end{equation}
which implies~\eqref{Lem1:ASM}.
Then, setting $w = \hat{w}$ in~\eqref{Lem4:ASM} yields~\eqref{Lem2:ASM}.
\end{proof}

Using \cref{Lem:ASM}, we can establish a descent property of the parallel subspace correction method for semicoercive problems, as presented in \cref{Lem:descent}.
We note that a corresponding result for coercive problems can be found in~\cite[Theorem~1]{PX:2024}.

\begin{lemma}
\label{Lem:descent}
Let $V$ be a reflexive Banach space, and let $F \colon V \rightarrow \mathbb{R}$ be a G\^{a}teaux differentiable, convex, and semicoercive functional with the kernel $\mathcal{N}$.
In \cref{Alg:PSC}, we have
\begin{equation*}
\resizebox{\hsize}{!}{%
$\displaystyle
F(u^{(n+1)}) \leq F(u^{(n)}) + \tau \min_{w \in V} \left\{ \langle F'(u^{(n)}), w \rangle + \inf_{\phi \in \mathcal{N}} \inf_{\sum_{j=1}^N w_j = w + \phi} \sum_{j=1}^N d_j (w_j; u^{(n)}) \right\},
\text{ } n \geq 0.
$
}
\end{equation*}
\end{lemma}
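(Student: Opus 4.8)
The plan is to exploit the convexity of $F$ together with the characterization of the parallel subspace correction update provided by \cref{Lem:ASM}. The starting point is the strengthened convexity condition~\eqref{convexity}, which, applied at $v = u^{(n)}$ with the minimizers $w_j = \hat{w}_j := w_j^{(n+1)}$ from \cref{Alg:PSC}, gives
\begin{equation*}
F(u^{(n+1)}) = F\!\left( u^{(n)} + \tau \sum_{j=1}^N \hat{w}_j \right) \leq (1 - \tau N) F(u^{(n)}) + \tau \sum_{j=1}^N F(u^{(n)} + \hat{w}_j).
\end{equation*}
Rewriting the right-hand side, this is $F(u^{(n)}) + \tau \sum_{j=1}^N \bigl( F(u^{(n)} + \hat{w}_j) - F(u^{(n)}) \bigr)$. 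Now I would insert the definition~\eqref{d_j} of the Bregman distance, namely $F(u^{(n)} + \hat{w}_j) - F(u^{(n)}) = \langle F'(u^{(n)}), \hat{w}_j \rangle + d_j(\hat{w}_j; u^{(n)})$, so that the bracketed sum becomes $\sum_{j=1}^N \bigl( \langle F'(u^{(n)}), \hat{w}_j \rangle + d_j(\hat{w}_j; u^{(n)}) \bigr)$.

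The remaining step is purely an appeal to \cref{Lem:ASM}. By~\eqref{Lem2:ASM}, with $v = u^{(n)}$ and $\hat{w} = \sum_{j=1}^N \hat{w}_j$, we have
\begin{equation*}
\sum_{j=1}^N d_j(\hat{w}_j; u^{(n)}) = \inf_{\phi \in \mathcal{N}} \inf_{\sum_{j=1}^N w_j = \hat{w} + \phi} \sum_{j=1}^N d_j(w_j; u^{(n)}),
\end{equation*}
and since $\langle F'(u^{(n)}), \phi \rangle = 0$ for all $\phi \in \mathcal{N}$ (this is exactly the property~(i) of \cref{Prop:semicoercive} differentiated, or equivalently follows from~\eqref{Def1:semicoercive}), the quantity $\sum_{j=1}^N \langle F'(u^{(n)}), \hat{w}_j \rangle + \sum_{j=1}^N d_j(\hat{w}_j; u^{(n)})$ equals $\langle F'(u^{(n)}), \hat{w} \rangle + d(\hat{w}; u^{(n)})$ in the notation of the proof of \cref{Lem:ASM}. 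By~\eqref{Lem1:ASM}, $\hat{w}$ minimizes $w \mapsto \langle F'(u^{(n)}), w \rangle + d(w; u^{(n)})$ over all $w \in V$, so this last expression equals $\min_{w \in V} \bigl\{ \langle F'(u^{(n)}), w \rangle + \inf_{\phi \in \mathcal{N}} \inf_{\sum_j w_j = w + \phi} \sum_j d_j(w_j; u^{(n)}) \bigr\}$. Substituting back yields the claimed inequality.

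I do not anticipate a serious obstacle here; the lemma is essentially a bookkeeping combination of~\eqref{convexity}, the definition of the Bregman distance, and the two conclusions of \cref{Lem:ASM}. The one point requiring a little care is the legitimacy of using $\tau \le \tau_0$ in~\eqref{convexity} — this is guaranteed by the definition of $\tau_0$ in~\eqref{tau_0} and the observation immediately following it — and the identity $\langle F'(v), \phi \rangle = 0$ on $\mathcal{N}$, which is already invoked in the proof of \cref{Lem:ASM} and so may be cited freely. If one wanted to be fully self-contained one could also note that each $F(u^{(n)} + \hat w_j) - F(u^{(n)}) \le 0$ since $\hat w_j$ is a minimizer of $F_j(\cdot; u^{(n)})$ and $0 \in V_j$, which makes the descent interpretation transparent, but this is not needed for the stated inequality.
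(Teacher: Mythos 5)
Your proof is correct and takes essentially the same route as the paper: apply the strengthened convexity condition~\eqref{convexity} at $v = u^{(n)}$ with the local minimizers $w_j^{(n+1)}$, unpack each $F(u^{(n)} + w_j^{(n+1)}) - F(u^{(n)})$ via the Bregman distance definition~\eqref{d_j}, and then use both conclusions of \cref{Lem:ASM} to identify $\sum_j \bigl( \langle F'(u^{(n)}), w_j^{(n+1)} \rangle + d_j(w_j^{(n+1)}; u^{(n)}) \bigr)$ with the claimed minimum over $w \in V$.
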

\begin{proof}
Take any $n \geq 0$.
Since the strengthened convexity condition~\eqref{convexity} implies
\begin{equation}
\label{Lem1:descent}
F(u^{(n+1)}) \leq (1 - \tau N) F(u^{(n)}) + \tau \sum_{j=1}^N F(u^{(n)} + w_j^{(n+1)}),
\end{equation}
it suffices to estimate the term $\sum_{j=1}^N F(u^{(n)} + w_j^{(n+1)})$.
It follows that
\begin{multline}
\label{Lem2:descent}
\sum_{j=1}^N F(u^{(n)} + w_j^{(n+1)})
= N F(u^{(n)}) + \sum_{j=1}^N \left( \langle F'(u^{(n)}), w_j^{(n+1)} \rangle + d_j (w_j^{(n+1)}; u^{(n)}) \right) \\
= N F(u^{(n)}) + \min_{w \in V } \left\{ \langle F'(u^{(n)}), w \rangle + \inf_{\phi \in \mathcal{N}} \inf_{\sum_{j=1}^N w_j = w + \phi} \sum_{j=1}^N d_j (w_j; u^{(n)}) \right\},
\end{multline}
where the last inequality is due to \cref{Lem:ASM}.
Combining~\eqref{Lem1:descent} and~\eqref{Lem2:descent} yields the desired result.
\end{proof}

A straightforward consequence of \cref{Lem:descent} is that the energy sequence generated by the parallel subspace correction method is decreasing; see \cref{Cor:descent}.

\begin{corollary}
\label{Cor:descent}
Let $V$ be a reflexive Banach space, and let $F \colon V \rightarrow \mathbb{R}$ be a G\^{a}teaux differentiable, convex, and semicoercive functional.
In \cref{Alg:PSC}, the energy sequence $\{ F(u^{(n)} ) \}$ is decreasing.
\end{corollary}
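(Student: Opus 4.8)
The plan is to derive Corollary~\ref{Cor:descent} directly from the descent estimate in Lemma~\ref{Lem:descent}. Fix $n \geq 0$ and apply Lemma~\ref{Lem:descent} to obtain
\begin{equation*}
F(u^{(n+1)}) \leq F(u^{(n)}) + \tau \min_{w \in V} \left\{ \langle F'(u^{(n)}), w \rangle + \inf_{\phi \in \mathcal{N}} \inf_{\sum_{j=1}^N w_j = w + \phi} \sum_{j=1}^N d_j (w_j; u^{(n)}) \right\}.
\end{equation*}
Since $\tau \in (0, \tau_0]$ is positive, it suffices to show that the minimum appearing on the right-hand side is nonpositive.

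To see this, I would simply evaluate the objective at the admissible choice $w = 0$, taking $\phi = 0$ and $w_j = 0$ for every $j \in [N]$. With this choice, $\langle F'(u^{(n)}), 0 \rangle = 0$, and by the definition~\eqref{d_j} of the Bregman distance each term satisfies $d_j(0; u^{(n)}) = F_j(0; u^{(n)}) - F(u^{(n)}) - \langle F'(u^{(n)}), 0 \rangle = F(u^{(n)}) - F(u^{(n)}) = 0$. Hence the objective attains the value $0$ at this point, so its infimum over $w \in V$ is at most $0$; that is,
\begin{equation*}
\min_{w \in V} \left\{ \langle F'(u^{(n)}), w \rangle + \inf_{\phi \in \mathcal{N}} \inf_{\sum_{j=1}^N w_j = w + \phi} \sum_{j=1}^N d_j (w_j; u^{(n)}) \right\} \leq 0.
\end{equation*}
Multiplying by $\tau > 0$ and substituting into the displayed inequality yields $F(u^{(n+1)}) \leq F(u^{(n)})$. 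Since $n \geq 0$ was arbitrary, the energy sequence $\{ F(u^{(n)}) \}$ is decreasing.

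There is essentially no obstacle here; the only point requiring a modicum of care is the interpretation of the ``$\min$'' in Lemma~\ref{Lem:descent} — one should note that the value is attained (at $\hat{w}$, by Lemma~\ref{Lem:ASM}), so the bound ``$\leq 0$'' via the feasible point $w = 0$ is legitimate, and that the convexity and G\^{a}teaux differentiability of $F$ guarantee $d_j(w_j; v) \geq 0$, which is consistent with (though not needed for) the argument. The whole proof is a one-line consequence of Lemma~\ref{Lem:descent} together with $d_j(0; \cdot) = 0$.
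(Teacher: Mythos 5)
Your argument is correct and is exactly the ``straightforward consequence'' the paper has in mind: plug in $w = 0$, $\phi = 0$, $w_j = 0$ to see that the minimum in Lemma~\ref{Lem:descent} is at most $0$, and the monotone decrease of $\{F(u^{(n)})\}$ follows immediately. The paper does not spell out a separate proof for Corollary~\ref{Cor:descent}, and your one-line derivation matches the intended reasoning.
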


\subsection{Convergence rate analysis}
Now, we derive convergence rate estimates for the parallel subspace correction method for solving the semicoercive problem~\eqref{model} based on the descent property presented in \cref{Lem:descent}.

A key ingredient in the convergence rate analysis is a stable decomposition property of the seminorm $| \cdot |$, similar to~\eqref{norm_stable}.
To establish this property, we assume that the seminorm satisfies the conditions stated in~\cref{Lem:seminorm}, as well as the local condition described in \cref{Ass:seminorm_local}.
It is worth noting that a similar assumption was made in the theory of singular linear problems; see~\cite[Equation~(A1)]{LWXZ:2008}.

\begin{assumption}
\label{Ass:seminorm_local}
For each $j \in [N]$, there exists a positive constant $C_j$ such that
\begin{equation*}
\inf_{\phi_j \in V_j \cap \ker | \cdot |} \| v_j + \phi_j \| \leq C_j | v_j |
\quad \forall v_j \in V_j.
\end{equation*}
\end{assumption}

\begin{remark}
\label{Rem:seminorm_local}
As discussed in~\cite[Example~A.1]{LWXZ:2008}, \cref{Ass:seminorm_local} does not follow from~\eqref{Lem1:seminorm} in general, and needs to be added as an additional assumption.
In the same spirit as \cref{Rem:BH}, this condition is satisfied by many seminorms commonly encountered in PDEs.
\end{remark}

In \cref{Lem:seminorm_stable}, we show that, under \cref{Ass:seminorm_local}, the stable decomposition property holds with respect to the seminorm $| \cdot |$ as well if the kernel of the seminorm is included as an additional subspace.

\begin{lemma}
\label{Lem:seminorm_stable}
Let $| \cdot |$ be a continuous seminorm on a Banach space $V$ that satisfies~\eqref{Lem1:seminorm}.
In addition, suppose that \cref{Ass:seminorm_local} holds.
Then we have
\begin{equation*}
\sup_{| w | = 1} \inf_{\phi \in \ker | \cdot |} \inf_{\sum_{j=1}^N w_j = w + \phi} \left( \sum_{j=1}^N \| w_j \|^q \right)^{\frac{1}{q}} < \infty,
\end{equation*}
\end{lemma}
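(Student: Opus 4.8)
The plan is to reduce the seminorm-stable decomposition to the norm-stable decomposition~\eqref{norm_stable} applied within the kernel-enriched space decomposition, using \cref{Lem:seminorm} and \cref{Ass:seminorm_local} to pass between norm and seminorm quantities. First I would invoke \cref{Lem:seminorm}: since $|\cdot|$ is continuous and satisfies~\eqref{Lem1:seminorm}, it is equivalent to the quotient norm $\|\cdot + \mathcal{N}\|_{V/\mathcal{N}}$ where $\mathcal{N} = \ker|\cdot|$. In particular, given $w \in V$ with $|w| = 1$, there exists $\phi_0 \in \mathcal{N}$ with $\|w + \phi_0\| \leq C|w| = C$ for the constant $C$ from~\eqref{Lem1:seminorm}. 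Set $\tilde{w} = w + \phi_0$, so $\tilde{w} \in V = \sum_{j=1}^N V_j$ and $\|\tilde{w}\| \leq C$.

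Next I would apply the norm-stable decomposition~\eqref{norm_stable} to $\tilde{w}$: there exist $\tilde{w}_j \in V_j$ with $\sum_{j=1}^N \tilde{w}_j = \tilde{w}$ and $\left(\sum_{j=1}^N \|\tilde{w}_j\|^q\right)^{1/q} \leq C_{\mathrm{stab}} \|\tilde{w}\| \leq C_{\mathrm{stab}} C$, where $C_{\mathrm{stab}}$ is the (finite) supremum in~\eqref{norm_stable}. This already gives a decomposition of $\tilde{w} = w + \phi_0$, i.e., of $w$ modulo $\mathcal{N}$, with controlled $\|\cdot\|^q$-sum, which is essentially the claim — taking $\phi = \phi_0$ and $w_j = \tilde{w}_j$, the inner double-infimum in the statement is bounded by $C_{\mathrm{stab}} C$, uniformly over all $w$ with $|w| = 1$. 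So the supremum is finite.

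The subtle point — and where \cref{Ass:seminorm_local} is actually needed — is whether the statement is meant with the stronger reading that each local piece $w_j$ should be controlled by its seminorm $|w_j|$, or that the decomposition must interact well with the local kernels $V_j \cap \mathcal{N}$; as \cref{Rem:seminorm_local} emphasizes, \cref{Ass:seminorm_local} is an extra hypothesis that does not follow from~\eqref{Lem1:seminorm}. If the intended statement additionally requires, say, $\sum_j |w_j|^q \lesssim |w|^q$ together with the norm bound, then after obtaining $\tilde{w}_j$ as above I would further adjust each $\tilde{w}_j$ by an element $\psi_j \in V_j \cap \mathcal{N}$: by \cref{Ass:seminorm_local} choose $\psi_j$ with $\|\tilde{w}_j + \psi_j\| \leq C_j |\tilde{w}_j| \leq C_j C' \|\tilde{w}_j\|$ (the last step by continuity of $|\cdot|$, cf.~\eqref{Lem2:seminorm}), set $w_j = \tilde{w}_j + \psi_j$ and absorb $\phi = \phi_0 - \sum_j \psi_j \in \mathcal{N}$; then $\sum_j w_j = w + \phi$, $|w_j| = |\tilde{w}_j|$, and the $\|\cdot\|^q$-sum of the $w_j$ is controlled by $\max_j(C_j C') \cdot C_{\mathrm{stab}} C$. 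Either way the supremum is finite.

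The main obstacle I anticipate is purely bookkeeping: making sure the adjustment by local kernel elements $\psi_j \in V_j \cap \mathcal{N}$ is consistent with the global constraint $\sum_j w_j = w + \phi$ for some $\phi \in \mathcal{N}$ — this works precisely because $\sum_j \psi_j \in \mathcal{N}$, so the correction can be moved into $\phi$. No orthogonality or Hilbert structure is needed here; the argument is entirely a combination of the two stability estimates~\eqref{norm_stable} and~\eqref{Lem1:seminorm} with \cref{Ass:seminorm_local}. I would keep the write-up short, just chaining these inequalities.
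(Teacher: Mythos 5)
Your argument is correct, and it takes a genuinely different and simpler route than the paper's. You lift $w$ to a small-norm representative $w + \phi_0$ using~\eqref{Lem1:seminorm} and then decompose it via the norm-stable property~\eqref{norm_stable}, so the whole proof runs directly inside $V$. The paper instead takes a near-minimizer of $\sum_j |w_j|^q$ over kernel-enriched decompositions $\sum_j w_j = w + \phi$, translates each $\tilde w_j$ by an element of $V_j \cap \mathcal{N}$ so that, via \cref{Ass:seminorm_local}, $\|\tilde w_j\|$ is comparable to $|\tilde w_j|$, and thereby reduces the claim to a \emph{seminorm} stable decomposition, which it proves by applying the open mapping theorem to $V/\mathcal{N} = \sum_j V_j/\mathcal{N}$. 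In that quotient-space route, \cref{Ass:seminorm_local} is exactly what is needed to make each $V_j/\mathcal{N}$ a closed (hence Banach) subspace of $V/\mathcal{N}$ so the open mapping theorem applies. You are right to notice that your argument never invokes \cref{Ass:seminorm_local}; however, your guess that it might be needed for a stronger ``$\sum_j |w_j|^q \lesssim |w|^q$'' reading is not the right explanation, since that bound would also follow from your decomposition by continuity of $|\cdot|$. The trade-off is that the paper's quotient-based argument expresses the decomposition constant in terms of seminorm quotient quantities, mirroring the singular linear theory it generalizes, whereas yours is shorter but carries the full-norm stable decomposition constant from~\eqref{norm_stable}. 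One small technical correction: the infimum in~\eqref{Lem1:seminorm} need not be attained, so you should take $\phi_0$ with, say, $\|w + \phi_0\| \leq 2C|w|$ rather than claiming $\|w + \phi_0\| \leq C|w|$; this does not affect the conclusion.
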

for any $q \in [1, \infty)$, where $w$ and $w_j$ are taken from $V$ and $V_j$, respectively.
\begin{proof}
We write $\mathcal{N} = \operatorname{ker} | \cdot |$.
Take any $w \in V$ and $\epsilon > 0$.
By~\eqref{Lem1:seminorm}, there exists $\tilde{\phi} \in \mathcal{N}$ such that
\begin{equation}
\label{Lem1:seminorm_stable}
\| w + \tilde{\phi} \|
\leq \inf_{\phi \in \mathcal{N}} \| w + \phi \| + \epsilon
\leq C | w | + \epsilon.
\end{equation}
Thanks to~\eqref{norm_stable}, there exist $\tilde{w}_j \in V_j$, $j \in [N]$, such that
\begin{equation}
\label{Lem2:seminorm_stable}
\sum_{j=1}^N \tilde{w}_j = w + \tilde{\phi}, \quad
\left( \sum_{j=1}^N \| \tilde{w}_j \|^q \right)^{\frac{1}{q}} \leq C' \| w + \tilde{\phi} \| + \epsilon,
\end{equation}
for some constant $C' > 0$.
Combining~\eqref{Lem1:seminorm_stable} and~\eqref{Lem2:seminorm_stable}, we obtain
\begin{equation*}
\inf_{\phi \in \mathcal{N}} \inf_{\sum_{j=1}^N w_j = w + \phi} \left( \sum_{j=1}^N \| w_j \|^q \right)^{\frac{1}{q}}
\leq \left( \sum_{j=1}^N \| \tilde{w}_j \|^q \right)^{\frac{1}{q}}
\leq C C' | w | + (C' + 1) \epsilon.
\end{equation*}
Since $w$ and $\epsilon$ are arbitrary, we obtain the desired result.
\end{proof}

Meanwhile, from \cref{Lem:descent}, we observe that the convergence behavior of the parallel subspace correction method depends on the infimum of the sum of the local functionals $d_j (w_j; v)$, $j \in [N]$.
To establish an upper bound for this infimum, we require a smoothness assumption on each $d_j (w_j; v)$, which ensures that $d_j (w_j; v)$ can be bounded above by a power of the norm $\| w_j \|^q$; see \cref{Ass:smooth}.
We note that, throughout this paper, we adopt the convention $0/0 = 0$ for arguments of $\sup$ and $0/0 = \infty$ for arguments of $\inf$.

\begin{assumption}[local smoothness]
\label{Ass:smooth}
For some $q > 1$, each $d_j (w_j; v)$, $j \in [N]$, satisfies the following:
for any $| \cdot |$-bounded convex subset $K \subset V$ and $\| \cdot \|$-bounded convex subset $K_j \subset V_j$ satisfying $0 \in K_j$, we have
\begin{equation*}
\sup_{v \in K, w_j \in K_j} \frac{d_j (w_j; v)}{\| w_j \|^q} < \infty.
\end{equation*}
\end{assumption}

In the case of exact local problems~\eqref{local_exact}, an easy-to-check sufficient condition for \cref{Ass:smooth} is the weak smoothness~\cite{Park:2022b} of the energy functional $F$, which is valid in many applications involving nonlinear PDEs~(see, e.g.,~\cite{CHW:2020,LP:2024,TX:2002}).
We summarize this result in \cref{Prop:smooth}.
In what follows, given $v, w \in V$, we denote the Bregman distance associated with $F$ between $v+w$ and $v$ by $d_F(w; v)$, i.e.,
\begin{equation}
\label{d_F}
d_F(w; v) = F(v + w) - F(v) - \langle F'(v), w \rangle,
\quad v, w \in V.
\end{equation}

\begin{proposition}
\label{Prop:smooth}
Let $V$ be a reflexive Banach space, and let $F \colon V \rightarrow \mathbb{R}$ be a G\^{a}teaux differentiable, convex, and semicoercive functional with respect to a seminorm $| \cdot |$ in the sense of \cref{Prop:semicoercive}.
Suppose that each $d_j (w_j; v)$, $j \in [N]$, is given by~\eqref{local_exact} and~\eqref{d_j}.
For some $q > 1$, if $F$ is locally $q$-weakly smooth, i.e., if
\begin{equation}
\label{Prop1:smooth}
\sup_{v, v+w \in K}
\frac{d_F(w; v)}{\| w \|^q} < \infty,
\end{equation}
for any $\| \cdot \|$-bounded convex subset $K$ of $V$, then \cref{Ass:smooth} holds.
\end{proposition}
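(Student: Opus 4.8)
The plan is to reduce \cref{Ass:smooth} to the local $q$-weak smoothness~\eqref{Prop1:smooth} by exploiting the invariance of the Bregman distance along the kernel $\mathcal{N} = \ker F = \ker | \cdot |$. I would begin with two elementary observations. Since the local problems are exact, combining~\eqref{local_exact} and~\eqref{d_j} shows that $d_j(w_j; v) = F(v + w_j) - F(v) - \langle F'(v), w_j \rangle = d_F(w_j; v)$ for every $w_j \in V_j$, so it is enough to bound $d_F(w_j; v) / \| w_j \|^q$. Next, differentiating the identity $F(\cdot + \phi) = F(\cdot)$ (condition~(i) in \cref{Prop:semicoercive}) in the G\^{a}teaux sense gives $F'(v + \phi) = F'(v)$ for all $v \in V$ and $\phi \in \mathcal{N}$; combined with $F(v + \phi + w) = F(v + w)$, this yields $d_F(w; v + \phi) = d_F(w; v)$, i.e., the Bregman distance is unchanged when its base point is translated by an element of $\mathcal{N}$.

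Then, given a $| \cdot |$-bounded convex set $K \subset V$ and a $\| \cdot \|$-bounded convex set $K_j \subset V_j$ with $0 \in K_j$, put $M = \sup_{v \in K} | v | < \infty$ and $R_j = \sup_{w_j \in K_j} \| w_j \| < \infty$. By~\eqref{Lem1:seminorm}, for each $v \in K$ I can choose $\phi_v \in \mathcal{N}$ with $\| v + \phi_v \| \leq C | v | + 1 \leq C M + 1 =: R$, and set $\tilde{v} = v + \phi_v$. Both $\tilde{v}$ and $\tilde{v} + w_j$ then lie in the closed ball $K' = \{ u \in V : \| u \| \leq R + R_j \}$, which is a $\| \cdot \|$-bounded convex subset of $V$. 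Applying~\eqref{Prop1:smooth} to $K'$ produces a finite constant $L$ with $d_F(w; u) \leq L \| w \|^q$ whenever $u, u + w \in K'$; in particular $d_F(w_j; \tilde{v}) \leq L \| w_j \|^q$. The invariance from the first paragraph gives $d_j(w_j; v) = d_F(w_j; v) = d_F(w_j; \tilde{v}) \leq L \| w_j \|^q$, whence $\sup_{v \in K,\, w_j \in K_j} d_j(w_j; v) / \| w_j \|^q \leq L < \infty$ (the term $w_j = 0$ being harmless under the convention $0/0 = 0$), which is precisely the assertion of \cref{Ass:smooth}.

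The only genuinely delicate point is this reduction from $| \cdot |$-boundedness to $\| \cdot \|$-boundedness: a $| \cdot |$-bounded set need not be norm-bounded, so~\eqref{Prop1:smooth} cannot be applied to $K$ directly. The resolution is that the per-point correction $v \mapsto v + \phi_v$ need not be linear or preserve the convexity of $K$---which is acceptable, since~\eqref{Prop1:smooth} is invoked on the fixed ball $K'$ rather than on the image of $K$. One must only take care that the radius $R$ is chosen uniformly in $v \in K$ (hence the role of $M < \infty$) and that the same exponent $q$ appears in the hypothesis and in the conclusion.
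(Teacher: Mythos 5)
Your proof is correct and rests on the same key idea as the paper's: the Bregman distance $d_F$ is invariant when the base point is shifted by a kernel element~(since both $F(\cdot+\phi)=F(\cdot)$ and, by G\^{a}teaux differentiating, $F'(\cdot+\phi)=F'(\cdot)$ for $\phi\in\mathcal{N}$), which lets you trade the $|\cdot|$-bounded set $K$ for a $\|\cdot\|$-bounded one before invoking~\eqref{Prop1:smooth}. The paper organizes this as a two-step reduction — it first upgrades~\eqref{Prop1:smooth} to the seminorm-local version~\eqref{Prop2:smooth} by translating \emph{both} $v$ and $w$ through the kernel and then applies that to the $|\cdot|$-bounded set $K+K_j$ — whereas you notice that $w_j\in K_j$ is already $\|\cdot\|$-bounded, so only the base point $v$ needs translating into a fixed norm ball $K'$. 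This is a modest streamlining of the same route, and it incidentally makes the choice of the common $\|\cdot\|$-bounded convex set on which~\eqref{Prop1:smooth} is invoked completely explicit.
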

\begin{proof}
Throughout this proof, let $C$ denote a general positive constant.
First, we prove that the local smoothness with respect to the norm $\| \cdot \|$ implies the local smoothness with respect to the seminorm $| \cdot |$ as well, i.e.,
\begin{equation}
\label{Prop2:smooth}
\sup_{v, v+w \in K} \frac{d_F(w; v)}{| w |^q} < \infty,
\end{equation}
for any $| \cdot |$-bounded convex subset $K$ of $V$.
Suppose that~\eqref{Prop1:smooth} holds, and take any $| \cdot |$-bounded convex subset $K$ of $V$.
We define the set $\widehat{K}$ as the convex hull of the following set:
\begin{equation*}
 \left\{ v + \phi :  v \in K, \text{ } \phi = \operatornamewithlimits{\arg\min}_{\psi \in \ker | \cdot | } \| v + \psi \| \right\}.
\end{equation*}
Then~\eqref{Lem1:seminorm} implies that $\widehat{K}$ is $\| \cdot \|$-bounded.
Hence, for any $v, v+w \in K$, we have
\begin{equation*}
d_F(w; v)
= d_F(\hat{w}; \hat{v})
\stackrel{\eqref{Prop1:smooth}}{\leq} C \| \hat{w} \|^q
\stackrel{\eqref{Lem1:seminorm}}{\leq} C | \hat{w} |^q
= C | w |^q,
\end{equation*}
where $\hat{v} \in \widehat{K}$ is defined as
\begin{equation*}
\hat{v} = v + \phi,
\quad 
\phi = \operatornamewithlimits{\arg\min}_{\psi \in \ker | \cdot | } \| v + \psi \|,
\end{equation*}
and $\hat{w} \in \widehat{K}$ is defined similarly.
This implies that~\eqref{Prop2:smooth} holds.

Now, it is enough to show that~\eqref{Prop2:smooth} implies \cref{Ass:smooth}.
Suppose that~\eqref{Prop2:smooth} holds.
Take any $j \in [N]$, $| \cdot |$-bounded convex subset $K$ of $V$, and $\| \cdot \|$-bounded convex subset $K_j$ of $V_j$ such that $0 \in K_j$.
It is straightforward to verify that $K + K_j$ is $| \cdot |$-bounded.
Moreover, if $v \in K$ and $w_j \in K_j$, then we have $v, v + w_j \in K + K_j$.
Applying~\eqref{Prop2:smooth} with the set $K + K_j$ yields the desired result.
\end{proof}

In \cref{Lem:stable}, we combine \cref{Lem:seminorm_stable} and \cref{Ass:smooth} to show that the infimum of the sum of the local functionals $d_j (w_j; v)$ appeared in \cref{Lem:descent} can be bounded above a power of the seminorm $| w |^q$.

\begin{lemma}[stable decomposition]
\label{Lem:stable}
Let $V$ be a reflexive Banach space, and let $F \colon V \rightarrow \mathbb{R}$ be a G\^{a}teaux differentiable, convex, and semicoercive functional with respect to a seminorm $| \cdot |$ in the sense of \cref{Prop:semicoercive}.
Suppose that \cref{Ass:seminorm_local,Ass:smooth} hold.
For any $| \cdot |$-bounded convex subset $K \subset V$, the following holds:
\begin{equation}
\label{C_K}
C_K := q \sup_{v, v+w \in K} \inf_{\phi \in \ker | \cdot |} \inf_{\sum_{j=1}^N w_j = w + \phi} \frac{\sum_{j=1}^N d_j (w_j; v)}{| w |^q} < \infty.
\end{equation}
\end{lemma}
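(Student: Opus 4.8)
The plan is to combine the seminorm-stable decomposition from \cref{Lem:seminorm_stable} with the local smoothness from \cref{Ass:smooth}, taking care that the two statements are compatible in the sense of the sets over which they quantify. First I would fix a $| \cdot |$-bounded convex subset $K \subset V$ and take arbitrary $v, v+w \in K$. The quantity $| w |$ is bounded by some constant $R$ depending only on $K$ (since $K$ is $| \cdot |$-bounded, the difference $w = (v+w) - v$ has $| w | \leq 2 \sup_{u \in K} |u|$). If $|w| = 0$ then, by property (i) of \cref{Prop:semicoercive}, $w \in \mathcal{N} = \ker | \cdot |$, so choosing $\phi = -w$ and $w_j = 0$ for all $j$ gives $\sum_j d_j(w_j; v) = 0$ and the quotient in~\eqref{C_K} is $0$ by the convention $0/0 = 0$; hence we may assume $|w| > 0$.

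Next, applying \cref{Lem:seminorm_stable} to the normalized element $w / |w|$, there is a constant $C_0 < \infty$ (independent of $w$) and a decomposition $w + \phi = \sum_{j=1}^N w_j$ with $\phi \in \mathcal{N}$, $w_j \in V_j$, and $\left( \sum_{j=1}^N \| w_j \|^q \right)^{1/q} \leq C_0 |w|$. In particular each $\| w_j \| \leq C_0 |w| \leq C_0 R$, so all the $w_j$ lie in a common $\| \cdot \|$-bounded convex subset $K_j$ of $V_j$ containing $0$ — e.g.\ the closed ball of radius $C_0 R$ in $V_j$. This is exactly the setup required by \cref{Ass:smooth}, so there is a constant $M_j < \infty$ with $d_j(w_j; v) \leq M_j \| w_j \|^q$ for all $v \in K$ and the relevant $w_j$. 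Summing, $\sum_{j=1}^N d_j(w_j; v) \leq \left( \max_j M_j \right) \sum_{j=1}^N \| w_j \|^q \leq \left( \max_j M_j \right) C_0^q | w |^q$. Dividing by $|w|^q$ and taking the supremum over $v, v+w \in K$ and the infimum over decompositions gives the bound $C_K \leq q \left( \max_j M_j \right) C_0^q < \infty$, as claimed.

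The only subtlety — and the step that requires the most care rather than the most work — is ensuring that \cref{Ass:smooth} is actually invoked on sets that do not depend on the particular $w$: the $\| \cdot \|$-boundedness of the $w_j$ must be uniform over all $v, v+w \in K$, which is why it matters that the constant $C_0$ from \cref{Lem:seminorm_stable} is independent of $w$ and that $K$ is $| \cdot |$-bounded (forcing $|w| \leq R$ uniformly). Likewise one should observe that $v \in K$ ranges over a $| \cdot |$-bounded convex set, which is precisely the hypothesis format of \cref{Ass:smooth}; no further reduction is needed there. Everything else is routine bookkeeping with the $0/0$ conventions stated before \cref{Ass:smooth}.
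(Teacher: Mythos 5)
Your proof is correct and takes essentially the same route as the paper's: bound $|w|$ by a constant depending on $K$, invoke \cref{Lem:seminorm_stable} to produce a decomposition $w + \phi = \sum_j w_j$ with $\bigl( \sum_j \|w_j\|^q \bigr)^{1/q} \lesssim |w|$, note that the $w_j$ then lie in fixed $\|\cdot\|$-balls, apply \cref{Ass:smooth} on $K$ and those balls, and sum. The only cosmetic differences are that the paper takes the (near-)optimal decomposition explicitly before invoking \cref{Lem:seminorm_stable}, whereas you invoke the lemma directly on $w/|w|$ — these are interchangeable — and that you spell out the $|w| = 0$ case via the $0/0 = 0$ convention, which the paper leaves implicit.
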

\begin{proof}
We take any $| \cdot |$-bounded convex subset $K$ of $V$, and write
\begin{equation*}
M_K := \sup_{v \in K} |v| < \infty.
\end{equation*}
Choose any $v, v+w \in K$.
We first observe that
\begin{equation}
\label{Lem1:stable}
|w | \leq |v| + |v + w| \leq 2M_K.
\end{equation}
We write $\mathcal{N} = \operatorname{ker} | \cdot |$.
Let $( ( \tilde{w}_j )_{j=1}^N, \tilde{\phi}) \in \prod_{j=1}^N V_j \times \mathcal{N}$ be a minimizer of $\sum_{j=1}^N \| w_j \|^q$ over $w_j \in V_j$ and $\phi \in \mathcal{N}$ such that $\sum_{j=1}^N w_j = w + \phi$.
By \cref{Lem:seminorm_stable}, we have
\begin{equation}
\label{Lem2:stable}
\| \tilde{w}_j \|
\leq \left( \sum_{j=1}^N \| \tilde{w}_j \|^q \right)^{\frac{1}{q}}
\leq C | w |
\stackrel{\eqref{Lem1:stable}}{\leq} 2 C M_K,
\end{equation}
for some $C > 0$.
If we set $K_j = \{ w_j \in V_j : \| w_j \| \leq 2 C M_K \}$ for each $j \in [N]$ in \cref{Ass:smooth}, then we obtain
\begin{equation}
\label{Lem3:stable}
L_j := q \sup_{\bar{v} \in K, w_j \in K_j} \frac{d_j (w_j; \bar{v})}{\| w_j \|^q} < \infty.
\end{equation}
It follows that
\begin{multline*}
\inf_{\phi \in \mathcal{N}} \inf_{\sum_{j=1}^N w_j = w + \phi} \sum_{j=1}^N d_j (w_j; v)
\leq \sum_{j=1}^{N} d_j (\tilde{w}_j; v) \\
\stackrel{\eqref{Lem3:stable}}\leq \sum_{j=1}^N \frac{L_j}{q} \| \tilde{w}_j \|^q
\stackrel{\eqref{Lem2:stable}}{\leq} \frac{C^q}{q} \left( \max_{j \in [N]} L_j \right) | w |^q.
\end{multline*}
Consequently, we get
\begin{equation*}
\inf_{\phi \in \mathcal{N}} \inf_{\sum_{j=1}^N w_j = w + \phi} \frac{\sum_{j=1}^N d_j (w_j; v)}{| w|^q} \leq \frac{C^q}{q} \max_{j \in [N]} L_j.
\end{equation*}
Note that the right-hand side is independent of both $v$ and $w$.
Hence, we conclude that
\begin{equation}
\label{Lem4:stable}
C_K = q \sup_{v, v+w \in K} \inf_{\phi \in \mathcal{N}} \inf_{\sum_{j=1}^N w_j = w + \phi} \frac{\sum_{j=1}^N d_j (w_j; v)}{| w|^q}
\leq C^q \max_{j \in [N]} L_j < \infty,
\end{equation}
which completes the proof.
\end{proof}

Given the initial iterate $u^{(0)} \in V$ of \cref{Alg:PSC}, we define
\begin{subequations}
\label{KR_0}
\begin{align}
\label{K_0}
K_0 &= \{ v \in V : F(v) \leq F(u^{(0)}) \}, \\
\label{R_0}
R_0 &= \sup_{v \in K_0} | v - u |.
\end{align}
\end{subequations}
The convexity and semicoercivity of $F$ implies that $K_0$ is $| \cdot |$-bounded and convex, and that $R_0 < \infty$.
Moreover, \cref{Cor:descent} implies that the sequence $\{ u^{(n)} \}$ generated by \cref{Alg:PSC} is contained in $K_0$.
Consequently, thanks to \cref{Lem:stable}, we have
\begin{equation*}
\inf_{\phi \in \mathcal{N}} \inf_{\sum_{j=1}^N w_j = w + \phi} \sum_{j=1}^N d_j (w_j; u^{(n)})
\leq \frac{C_{K_0}}{q} | w |^q,
\end{equation*}
for any $w \in V$ such that $u^{(n)} + w \in K$ and $n \geq 0$, where $C_{K_0}$ was given in~\eqref{C_K}.
Using a similar argument as in~\cite{Park:2020,Park:2022a}, we are able to derive the following convergence theorem for the parallel subspace correction method for solving~\eqref{model}.
A proof of \cref{Thm:conv} can be found in \cref{App:Proofs}.

\begin{theorem}
\label{Thm:conv}
Let $V$ be a reflexive Banach space, and let $F \colon V \rightarrow \mathbb{R}$ be a G\^{a}teaux differentiable, convex, and semicoercive functional with respect to a seminorm $| \cdot |$ in the sense of \cref{Prop:semicoercive}.
Suppose that \cref{Ass:smooth} holds.
In \cref{Alg:PSC}, let $\zeta_n = F(u^{(n)}) - F(u)$ for $n \geq 0$.
If $\zeta_0 > C_{K_0} R_0^q$, then we have
\begin{equation*}
\zeta_1 \leq \left(1 - \tau \left( 1 - \frac{1}{q} \right) \right) \zeta_0,
\end{equation*}
where $C_{K_0}$ and $R_0$ were given in~\eqref{C_K} and~\eqref{KR_0}.
Otherwise, we have
\begin{equation*}
\zeta_n \leq \frac{C}{\left( n + (C/\zeta_0)^{1/\beta} \right)^{\beta}},
\quad n \geq 0,
\end{equation*}
where
\begin{equation*}
\beta = q-1, \quad
C = \left(\frac{q}{\tau}\right)^{q-1} C_{K_0} R_0^q.
\end{equation*}
\end{theorem}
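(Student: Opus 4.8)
The plan is to extract from \cref{Lem:descent} a scalar recurrence for $\zeta_n = F(u^{(n)}) - F(u)$ and then solve it. Let $u$ be the minimizer in the statement. First I would record two facts about the sublevel set $K_0$ in \eqref{K_0}: by \cref{Cor:descent} the whole sequence $\{u^{(n)}\}$ lies in $K_0$, and since $F(u) \le F(u^{(0)})$ also $u \in K_0$, so by convexity of $K_0$ the point $u^{(n)} + t(u - u^{(n)}) = (1-t)u^{(n)} + tu$ remains in $K_0$ for every $t \in [0,1]$. Inserting the test element $w = t(u - u^{(n)})$ into \cref{Lem:descent}, bounding the linear term by convexity of $F$, $\langle F'(u^{(n)}), t(u - u^{(n)}) \rangle \le t(F(u) - F(u^{(n)})) = -t\zeta_n$, and bounding the inner infimum via \cref{Lem:stable} applied on $K = K_0$ (legitimate since $u^{(n)}$ and $u^{(n)} + w$ both lie in $K_0$), $\inf_{\phi \in \mathcal{N}} \inf_{\sum_{j=1}^N w_j = w + \phi} \sum_{j=1}^N d_j(w_j; u^{(n)}) \le \tfrac{C_{K_0}}{q}|w|^q = \tfrac{C_{K_0}}{q}t^q |u - u^{(n)}|^q \le \tfrac{C_{K_0} R_0^q}{q}t^q$ (using $|u - u^{(n)}| \le R_0$ since $u^{(n)} \in K_0$), I would obtain, for every $n \ge 0$,
\begin{equation}
\label{Proposal:rec1}
\zeta_{n+1} \le \zeta_n + \tau \min_{t \in [0,1]} g_n(t),
\qquad g_n(t) := -t\zeta_n + \frac{C_{K_0} R_0^q}{q} t^q .
\end{equation}

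The next step is to minimize $g_n$ over $[0,1]$; its stationary point is $t^\star = \big( \zeta_n / (C_{K_0} R_0^q) \big)^{1/(q-1)}$. If $\zeta_0 > C_{K_0} R_0^q$, then at $n = 0$ we have $t^\star > 1$, $g_0$ is decreasing on $[0,1]$, so $\min_{[0,1]} g_0 = g_0(1) = -\zeta_0 + \tfrac{C_{K_0} R_0^q}{q} \le -(1 - \tfrac1q)\zeta_0$, and \eqref{Proposal:rec1} yields $\zeta_1 \le (1 - \tau(1 - \tfrac1q))\zeta_0$, the first assertion. If instead $\zeta_0 \le C_{K_0} R_0^q$, then the monotonicity of $\{\zeta_n\}$ from \cref{Cor:descent} keeps $\zeta_n \le C_{K_0} R_0^q$ for all $n$, hence $t^\star \in (0,1]$ and the minimum of $g_n$ on $[0,1]$ is attained at $t^\star$; using $(t^\star)^{q-1} = \zeta_n / (C_{K_0} R_0^q)$ a short computation gives $g_n(t^\star) = -(1 - \tfrac1q)\zeta_n^{q/(q-1)} (C_{K_0} R_0^q)^{-1/(q-1)}$, so that
\begin{equation}
\label{Proposal:rec2}
\zeta_{n+1} \le \zeta_n - \tau\Big(1 - \tfrac1q\Big) (C_{K_0} R_0^q)^{-1/(q-1)} \zeta_n^{q/(q-1)},
\qquad n \ge 0 .
\end{equation}
(The degenerate situations $\zeta_n = 0$ or $C_{K_0} R_0^q = 0$ force $\zeta_{n+1} = 0$ and make the claimed estimate trivial, so below I assume $\zeta_n > 0$.)

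Finally I would resolve \eqref{Proposal:rec2}. Set $\beta = q - 1$ and $s_n := (C/\zeta_n)^{1/\beta}$ with $C = (q/\tau)^{\beta} C_{K_0} R_0^q$; the claimed bound is precisely $s_n \ge n + s_0$, which I would prove by induction, the base case $n = 0$ being trivial. For the inductive step, let $x := \tau(1 - \tfrac1q)(C_{K_0} R_0^q)^{-1/\beta} \zeta_n^{1/\beta}$, so that \eqref{Proposal:rec2} reads $\zeta_{n+1} \le (1 - x)\zeta_n$, with $x \in [0,1)$ (if $x \ge 1$ then $\zeta_{n+1} = 0$ and we are done). The convexity inequality $(1 - x)^{-1/\beta} \ge 1 + x/\beta$ then gives
\begin{align*}
s_{n+1} - s_n
&= C^{1/\beta} \big( \zeta_{n+1}^{-1/\beta} - \zeta_n^{-1/\beta} \big)
\ge C^{1/\beta} \zeta_n^{-1/\beta} \big( (1 - x)^{-1/\beta} - 1 \big) \\
&\ge \frac{C^{1/\beta}}{\beta} \zeta_n^{-1/\beta} x
= \frac{\tau(1 - \tfrac1q)}{\beta} \Big( \frac{C}{C_{K_0} R_0^q} \Big)^{1/\beta} .
\end{align*}
Since $\big( C/(C_{K_0} R_0^q) \big)^{1/\beta} = q/\tau$ and $1 - \tfrac1q = \beta/q$, the right-hand side equals $1$, hence $s_{n+1} \ge s_n + 1 \ge (n+1) + s_0$. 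Unwinding $s_n \ge n + s_0$ yields $\zeta_n \le C/(n + (C/\zeta_0)^{1/\beta})^{\beta}$, which is the second assertion.

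I expect the work to be mostly careful bookkeeping rather than conceptual. Two points deserve attention: (a) justifying that \cref{Lem:stable} may be used with a single fixed set $K_0$ uniformly in $n$, which rests on \cref{Cor:descent} and on $K_0$ being $|\cdot|$-bounded and convex with $u \in K_0$; and (b) the precise arithmetic at the end, where the choice $C = (q/\tau)^{q-1} C_{K_0} R_0^q$ is exactly what turns the telescoped increment into $1$ and thereby produces the decay exponent $\beta = q - 1$. The genuinely substantive estimate — converting local smoothness and the seminorm stable decomposition into a bound on $\sum_{j=1}^N d_j$ — has already been absorbed into \cref{Lem:stable}.
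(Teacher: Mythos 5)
Your proof is correct and follows essentially the same route as the paper's: substitute $w = t(u - u^{(n)})$ into \cref{Lem:descent}, bound the Bregman-distance infimum via \cref{Lem:stable} on the sublevel set $K_0$ (which contains all iterates by \cref{Cor:descent}), minimize the resulting scalar function over $t \in [0,1]$ (the paper packages this as \cref{Lem:minimum}), and resolve the recurrence. The only cosmetic difference is that the paper closes by citing the black-box recurrence lemma (\cref{Lem:recurrence}), whereas you prove the telescoping bound $s_{n+1} \geq s_n + 1$ for $s_n = (C/\zeta_n)^{1/\beta}$ inline via the Bernoulli-type inequality $(1-x)^{-1/\beta} \geq 1 + x/\beta$; this is precisely the standard proof of that cited lemma, so the two arguments coincide.
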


In the same spirit as the convergence theory for coercive problems~\cite{Park:2020,Park:2022a,PX:2024}, we are able to obtain an improved convergence rate of the parallel subspace correction method if we additionally assume that the energy functional $F$ is sharp~\cite{RD:2020} around a minimizer.
We formally summarize this sharpness assumption in \cref{Ass:sharp}.

\begin{assumption}[sharpness]
\label{Ass:sharp}
For some $p > 1$, the function $F$ satisfies the following: for any $| \cdot |$-bounded convex subset $K \subset V$ satisfying $u \in K$, we have
\begin{equation}
\label{mu_K}
\mu_K := p \inf_{v \in K} \frac{F(v) - F(u)}{| v - u |^p} > 0.
\end{equation}
\end{assumption}

In \cref{Thm:conv_sharp}, we provide convergence rate estimates for the parallel subspace correction method under the additional assumption described in \cref{Ass:sharp}.
A proof of \cref{Thm:conv_sharp} is given in \cref{App:Proofs}.

\begin{theorem}
\label{Thm:conv_sharp}
Let $V$ be a reflexive Banach space, and let $F \colon V \rightarrow \mathbb{R}$ be a G\^{a}teaux differentiable, convex, and semicoercive functional with respect to a seminorm $| \cdot |$ in the sense of \cref{Prop:semicoercive}.
Suppose that \cref{Ass:smooth,Ass:sharp} hold.
In \cref{Alg:PSC}, let $\zeta_n = F(u^{(n)}) - F(u)$ for $n \geq 0$.
Then we have the following:
\begin{enumerate}[(a)]
\item In the case $p = q$, we have
\begin{equation*}
\zeta_n \leq \left( 1- \tau \left( 1 - \frac{1}{q} \right) \min \left\{ 1, \frac{\mu_{K_0}}{q C_{K_0}} \right\}^{\frac{1}{q-1}} \right)^n \zeta_0,
\quad n \geq 0,
\end{equation*}
where $C_{K_0}$ and $\mu_{K_0}$ were given in~\eqref{C_K}, \eqref{K_0}, and~\eqref{mu_K}.

\item In the case $p > q$, if $\zeta_0 > \left( \frac{p}{\mu_{K_0}} \right)^{\frac{q}{p-q}} C_{K_0}^{\frac{p}{p-q}}$, then we have
\begin{equation*}
\zeta_1 \leq \left(1 - \tau \left( 1 - \frac{1}{q} \right) \right) \zeta_0.
\end{equation*}
Otherwise, we have
\begin{equation*}
\zeta_n \leq \frac{C}{\left( n + (C/\zeta_0)^{1/\beta} \right)^{\beta}},
\quad n \geq 0,
\end{equation*}
where
\begin{equation*}
\beta = \frac{p(q-1)}{p-q}, \quad
C = \left( \frac{pq}{(p-q) \tau} \right)^{\frac{p(q-1)}{p-q}} \left( \frac{p}{\mu_{K_0}} \right)^{\frac{q}{p-q}} C_{K_0}^{\frac{p}{p-q}}.
\end{equation*}
\end{enumerate}
\end{theorem}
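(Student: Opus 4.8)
The plan is to combine the descent estimate from \cref{Lem:descent} with the stable decomposition bound from \cref{Lem:stable} and the sharpness lower bound from \cref{Ass:sharp}, following the template used for coercive problems in~\cite{Park:2020,Park:2022a}. First I would record that, by \cref{Cor:descent}, the whole sequence $\{u^{(n)}\}$ stays in the sublevel set $K_0$ defined in~\eqref{K_0}, which is $|\cdot|$-bounded and convex with $u\in K_0$; hence both $C_{K_0}$ from~\eqref{C_K} and $\mu_{K_0}$ from~\eqref{mu_K} are finite and positive, and the estimates of \cref{Lem:stable} and \cref{Ass:sharp} apply along the iteration. The key one-step inequality is obtained by taking $w = t(u - u^{(n)})$ for $t\in[0,1]$ in \cref{Lem:descent}: convexity gives $\langle F'(u^{(n)}), u-u^{(n)}\rangle \le F(u)-F(u^{(n)}) = -\zeta_n$, so $\langle F'(u^{(n)}), w\rangle \le -t\zeta_n$, while \cref{Lem:stable} bounds the infimum term by $\frac{C_{K_0}}{q}|w|^q = \frac{C_{K_0}}{q}t^q|u-u^{(n)}|^p\cdot|u-u^{(n)}|^{q-p}$; here I substitute $|u-u^{(n)}|^p \le \frac{p}{\mu_{K_0}}\zeta_n$ from \cref{Ass:sharp}. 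This yields
\begin{equation*}
\zeta_{n+1} \le \zeta_n + \tau\min_{t\in[0,1]}\left(-t\zeta_n + \frac{C_{K_0}}{q}\,t^q\cdot\frac{p}{\mu_{K_0}}\,\zeta_n\,|u-u^{(n)}|^{q-p}\right),
\end{equation*}
and one then bounds $|u-u^{(n)}|^{q-p}$ appropriately depending on the sign of $q-p$.

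For part (a), $p=q$, the $|u-u^{(n)}|^{q-p}$ factor disappears, so the bracket becomes $\zeta_n\big(-t + \frac{pC_{K_0}}{q\mu_{K_0}}t^q\big)$; minimizing the scalar function $g(t) = -t + a t^q$ with $a = \tfrac{pC_{K_0}}{q\mu_{K_0}} = \tfrac{C_{K_0}}{\mu_{K_0}}$ over $[0,1]$ gives, at the unconstrained minimizer $t^* = (1/(qa))^{1/(q-1)}$ truncated to $1$, a value of order $-(1-\tfrac1q)\min\{1,(qa)^{-1}\}^{1/(q-1)}$; plugging back produces the geometric rate $\zeta_{n+1}\le\big(1-\tau(1-\tfrac1q)\min\{1,\tfrac{\mu_{K_0}}{qC_{K_0}}\}^{1/(q-1)}\big)\zeta_n$, which iterates to the claimed bound. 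For part (b), $p>q$, so $|u-u^{(n)}|^{q-p}$ is a \emph{negative} power; one uses $|u-u^{(n)}|^p\le\frac{p}{\mu_{K_0}}\zeta_n$ to get $|u-u^{(n)}|^{q-p}\ge\big(\frac{p}{\mu_{K_0}}\zeta_n\big)^{(q-p)/p}$, i.e. the penalty coefficient scales like $\zeta_n^{(q-p)/p}$, a \emph{decreasing} function of $\zeta_n$. Splitting into the two regimes governed by the threshold $\zeta_0 > \big(\frac{p}{\mu_{K_0}}\big)^{q/(p-q)}C_{K_0}^{p/(p-q)}$: when $\zeta_0$ is above the threshold the unconstrained minimizer $t^*$ exceeds $1$, so $t=1$ is optimal and yields the single geometric drop $\zeta_1\le(1-\tau(1-\tfrac1q))\zeta_0$; below the threshold, $t^*\in(0,1)$ and substituting it gives a recursion of the form $\zeta_{n+1}\le\zeta_n - c\,\zeta_n^{1+1/\beta}$ with $\beta = \frac{p(q-1)}{p-q}$ and $c$ built from $\tau,p,q,\mu_{K_0},C_{K_0}$. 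A standard lemma on such sequences (as in~\cite{Park:2020,Park:2022a}, and already invoked for \cref{Thm:conv}) then converts this into $\zeta_n\le C/(n+(C/\zeta_0)^{1/\beta})^{\beta}$ with the stated $C$; I would either cite that lemma from \cref{App:Proofs} or reproduce its short induction.

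The main obstacle is bookkeeping the constants through the truncated scalar minimization so that they match the exact expressions in the statement — in particular keeping track of the factors $p$, $q$, $p-q$, and the exponents $q/(p-q)$ and $p/(p-q)$ when passing from the per-step recursion $\zeta_{n+1}\le\zeta_n - c\zeta_n^{1+1/\beta}$ to the closed-form decay, and verifying that the threshold $\big(\frac{p}{\mu_{K_0}}\big)^{q/(p-q)}C_{K_0}^{p/(p-q)}$ is exactly the value of $\zeta_0$ at which the unconstrained minimizer $t^*$ hits $1$. None of this is conceptually hard, but it is where errors creep in; everything else (membership in $K_0$, applicability of \cref{Lem:stable} and \cref{Ass:sharp}, the choice $w=t(u-u^{(n)})$, the use of convexity for the gradient term) is routine and parallels the coercive case. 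Since the excerpt says the proof is deferred to \cref{App:Proofs}, in the main text I would simply state the two ingredients (one-step inequality plus the scalar minimization) and point to the appendix for the constant-tracking details.
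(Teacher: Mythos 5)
Your overall strategy matches the paper's: combine \cref{Lem:descent} with the stable decomposition bound, substitute $w = t(u - u^{(n)})$, invoke \cref{Ass:sharp}, minimize the resulting scalar function over $t\in[0,1]$, case-split on $p=q$ versus $p>q$, and feed the recursion into the standard lemma (\cref{Lem:recurrence} in \cref{App:Proofs}). Part (a) as you describe it is fine.

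However, there is a genuine direction error in your handling of part (b). You factor $|u-u^{(n)}|^{q} = |u-u^{(n)}|^{p}\cdot|u-u^{(n)}|^{q-p}$, substitute $|u-u^{(n)}|^{p}\le\frac{p}{\mu_{K_0}}\zeta_n$, and are left needing to control $|u-u^{(n)}|^{q-p}$ with $q-p<0$. To upper-bound the penalty coefficient you need an \emph{upper} bound on $|u-u^{(n)}|^{q-p}$, which requires a \emph{lower} bound on $|u-u^{(n)}|$. But \cref{Ass:sharp} only gives $|u-u^{(n)}|^{p}\le\frac{p}{\mu_{K_0}}\zeta_n$, an upper bound, and raising it to the negative power $(q-p)/p$ flips the inequality, yielding only
\begin{equation*}
|u-u^{(n)}|^{q-p}\ \ge\ \Bigl(\tfrac{p}{\mu_{K_0}}\zeta_n\Bigr)^{\frac{q-p}{p}},
\end{equation*}
which is the wrong direction and cannot close the argument. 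The paper sidesteps this entirely by not factoring: since $q/p>0$, the monotone map $x\mapsto x^{q/p}$ applied to $|u-u^{(n)}|^{p}\le\frac{p}{\mu_{K_0}}\zeta_n$ gives directly
\begin{equation*}
|u-u^{(n)}|^{q}\ \le\ \Bigl(\tfrac{p}{\mu_{K_0}}\zeta_n\Bigr)^{\frac{q}{p}},
\end{equation*}
after which the scalar minimization (via \cref{Lem:minimum}) and the threshold identification proceed exactly as you describe. So the fix is simple — drop the factorization and apply the sharpness bound as a single power — but as written your route for $p>q$ does not go through.
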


\begin{remark}
\label{Rem:coercive}
By setting $\mathcal{N} = \{ 0 \}$ in \cref{Thm:conv,Thm:conv_sharp}, we recover the convergence results for the parallel subspace correction method for coercive problems~\cite{Park:2020,PX:2024}.
\end{remark}

\section{Convergence analysis for nearly semicoercive problems}
\label{Sec:Nearly}
In~\cite{LWXZ:2007,WZ:2014}, it was proven that subspace correction methods for nearly singular linear systems on Hilbert spaces achieve convergence rate estimates that are independent of the nearly singular behavior of the systems, assuming appropriate conditions on the space decompositions.
Namely, if the near null space can be decomposed into a sum of local near null spaces, parameter-independent estimates for the convergence rates can be established~\cite[Theorems~4.2 and~4.3]{LWXZ:2007}.
In this section, we extend these results to nearly semicoercive convex optimization problems in Banach spaces, thereby enabling applications to a broader class of nonlinear problems.

\subsection{Orthogonal decompositions of Banach spaces}
A key tool used in the convergence analysis of nearly singular linear systems on Hilbert spaces in~\cite{LWXZ:2007,WZ:2014} is the fact that the kernel of the singular part admits an orthogonal complement; see~\cite[Lemma~4.5]{LWXZ:2007}.
Unfortunately, in general Banach spaces, not every closed subspace has a complement~\cite{LT:1971}, and moreover, there is no inherent orthogonality structure.
Therefore, to analyze nearly coercive problems in Banach spaces, we must employ alternative tools.

For this purpose, we present decomposition results for Banach spaces based on a generalized notion of orthogonality, introduced in~\cite{Alber:2000,Alber:2005}.
To achieve this, we require a stronger assumption on Banach spaces than just reflexivity.
Specifically, let $V$ be a uniformly convex and uniformly smooth Banach space, which ensures that $V$ is reflexive~\cite[Theorems~II.2.9 and~II.2.15]{Cioranescu:1990}.
It is also worth noting that many Sobolev spaces associated with nonlinear PDEs are uniformly convex and uniformly smooth~\cite{Adams:1975}.

The normalized duality mapping $J \colon V \rightrightarrows V^*$ on $V$ is defined as
\begin{equation*}
J (v) = \{ v^* \in V^* : \langle v^*, v \rangle = \| v^* \| \| v \|, \text{ } \|v^* \| = \|v\| \},
\quad v \in V.
\end{equation*}
Note that the uniform convexity and uniform smoothness of $V$ ensure that $J$ is bijective~\cite[Proposition~II.3.6]{Cioranescu:1990}, allowing us to identify $J$ as a mapping $J \colon V \rightarrow V^*$.
Similarly, we define $J^* \colon V^* \rightrightarrows V$ as the normalized duality mapping on $V^*$:
\begin{equation*}
    J^* (v^*) = \{ v \in V : \langle v^*, v \rangle = \| v^* \| \| v \|, \text{ } \| v \| = \| v^* \| \},
    \quad v^* \in V^*.
\end{equation*}
Then, we have $J^* = J^{-1}$.

Given a closed subspace $\mathcal{M}$ of $V$, we denote its polar set as $\mathcal{M}^{\circ}$:
\begin{equation*}
\mathcal{M}^{\circ} = \{ v^* \in V^* : \langle v^*, v \rangle = 0 \text{ for all } v \in \mathcal{M} \}.
\end{equation*}
The following proposition provides an orthogonal decomposition of $V$ into $\mathcal{M}$ and its complement, which is a nonlinear manifold in general~\cite{Alber:2005}, involving $\mathcal{M}^{\circ}$ and the normalized duality mapping $J^*$.
While this result appeared previously in~\cite[Theorem~2.13]{Alber:2005}, we include the proof of this result for the sake of completeness.

\begin{proposition}
\label{Prop:orthogonal}
Let $V$ be a uniformly convex and uniformly smooth Banach space, and let $\mathcal{M}$ be a closed subspace of $V$.
Then, $V$ admits a decomposition $V = \mathcal{M} + J^* \mathcal{M}^{\circ}$, which satisfies the following:
\begin{enumerate}[(i)]
\item Each element $v \in V$ has a unique decomposition $v = \phi + \xi$ with $\phi \in \mathcal{M}$ and $\xi \in J^* \mathcal{M}^{\circ}$.
Moreover, we have
\begin{equation}
\label{Prop1:orthogonal}
\phi = \operatornamewithlimits{\arg\min}_{w \in \mathcal{M}} \|v - w \|.
\end{equation}
\item $\langle J \xi, \phi \rangle = 0$ for any $\phi \in \mathcal{M}$ and $\xi \in J^* \mathcal{M}^{\circ}$.
\item $\mathcal{M} \cap J^* \mathcal{M}^{\circ} = \{ 0 \}$.
\end{enumerate}
\end{proposition}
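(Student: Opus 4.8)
The plan is to use a variational/projection argument. For $v \in V$, define $\phi$ by the minimization problem $\phi = \operatornamewithlimits{arg\,min}_{w \in \mathcal{M}} \| v - w \|$. First I would argue that this minimizer exists and is unique: since $V$ is reflexive and $\mathcal{M}$ is closed and convex, existence follows from the weak lower semicontinuity of the norm together with the boundedness of minimizing sequences; uniqueness follows from strict convexity of $V$, which is implied by uniform convexity. So $\phi$ is well-defined, and I would set $\xi = v - \phi$. The substance of the proof is then to show $\xi \in J^* \mathcal{M}^{\circ}$, equivalently $J \xi \in \mathcal{M}^{\circ}$, which simultaneously gives the decomposition $V = \mathcal{M} + J^* \mathcal{M}^{\circ}$ and claim (ii) in the form $\langle J\xi, \phi' \rangle = 0$ for all $\phi' \in \mathcal{M}$.

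The key step is the first-order optimality condition for the metric projection. For any $\psi \in \mathcal{M}$ and $t \in \mathbb{R}$, the function $t \mapsto \frac{1}{2}\| v - \phi - t\psi \|^2 = \frac{1}{2}\| \xi - t\psi\|^2$ is minimized at $t = 0$. Differentiating (the norm squared is Gâteaux differentiable on a uniformly smooth space, with derivative given by the duality mapping $J$, cf.\ \cite[Proposition~II.3.6]{Cioranescu:1990}), the stationarity condition yields $\langle J\xi, \psi \rangle = 0$. Since $\psi \in \mathcal{M}$ was arbitrary and $\mathcal{M}$ is a subspace (so both $\psi$ and $-\psi$ are admissible), this gives $\langle J\xi, \psi \rangle = 0$ for all $\psi \in \mathcal{M}$, i.e.\ $J\xi \in \mathcal{M}^{\circ}$, hence $\xi \in J^*\mathcal{M}^{\circ}$ using $J^* = J^{-1}$. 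This proves existence of the decomposition $v = \phi + \xi$ with the stated property \eqref{Prop1:orthogonal}, and also establishes (ii).

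For the uniqueness part of (i) and for (iii), I would argue as follows. Suppose $v = \phi_1 + \xi_1 = \phi_2 + \xi_2$ with $\phi_i \in \mathcal{M}$ and $\xi_i \in J^*\mathcal{M}^{\circ}$; it suffices to treat (iii), i.e.\ $\phi \in \mathcal{M} \cap J^*\mathcal{M}^{\circ}$ implies $\phi = 0$, since then $\phi_1 - \phi_2 = \xi_2 - \xi_1 \in \mathcal{M} \cap J^*\mathcal{M}^{\circ}$ forces $\phi_1 = \phi_2$ and $\xi_1 = \xi_2$. If $\phi \in \mathcal{M}$ and $\phi \in J^*\mathcal{M}^{\circ}$, then $J\phi \in \mathcal{M}^{\circ}$, so $\langle J\phi, \phi \rangle = 0$; but by definition of the normalized duality mapping $\langle J\phi, \phi \rangle = \|\phi\|^2$, hence $\phi = 0$. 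This proves (iii), which in turn closes the uniqueness in (i).

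I expect the main obstacle to be the careful justification of the differentiation step in the optimality condition — namely that $\|\cdot\|^2$ is Gâteaux differentiable with derivative $2J$ on a uniformly smooth space, and that the single-valuedness of $J$ (hence of the derivative) is exactly what uniform smoothness buys us. Everything else (existence via reflexivity, uniqueness via strict convexity, the pairing identity $\langle J\phi,\phi\rangle = \|\phi\|^2$) is standard once that smoothness fact is in place. Since the paper already invokes \cite{Cioranescu:1990} for the bijectivity of $J$, I would lean on the same reference for the differentiability of the norm.
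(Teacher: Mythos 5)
Your proof is correct and follows essentially the same path as the paper's: define $\phi$ as the metric projection of $v$ onto $\mathcal{M}$, use the first-order optimality condition to obtain $J(v-\phi)\in\mathcal{M}^{\circ}$ (hence the decomposition and (ii)), and prove (iii) from $\langle J\phi,\phi\rangle=0\Rightarrow\phi=0$, with uniqueness in (i) reduced to (iii). The only cosmetic difference is in (iii): the paper invokes strict monotonicity of $J$, while you go straight to the defining identity $\langle J\phi,\phi\rangle=\|\phi\|^2$, which is a touch more elementary but equivalent.
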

\begin{proof}
The validity of~(ii) follows directly from the definition of the polar set $\mathcal{M}^{\circ}$.
To prove~(iii), let $v \in \mathcal{M} \cap J^* \mathcal{M}^{\circ}$.
Since $v \in \mathcal{M}$ and $J v = (J^*)^{-1}v \in \mathcal{M}^{\circ}$, we have $\langle J v, v \rangle = 0$.
This implies $v = 0$ because of the strict monotonicity of $J$~\cite[Theorem~II.1.8]{Cioranescu:1990}.

Finally, we prove~(i) using an argument similar as in~\cite{Alber:2000}.
Take any $v \in V$, and we define $\phi \in \mathcal{M}$ as in~\eqref{Prop1:orthogonal}.
The optimality condition of $\phi$ reads as
\begin{equation*}
\langle J (v - \phi), w \rangle = 0
\quad \forall w \in \mathcal{M},
\end{equation*}
which is equivalent to $J (v - \phi) \in \mathcal{M}^{\circ}$.
Thus, we have $\xi := v - \phi \in J^* \mathcal{M}^{\circ}$, leading to the desired decomposition $v = \phi + \xi \in \mathcal{M} + J^* \mathcal{M}^{\circ}$.
The uniqueness of this decomposition follows directly from~(iii).
\end{proof}

As a direct consequence of \cref{Prop:orthogonal}, we obtain the following corollary, which will play an important role in the convergence analysis of nearly semicoercive problems.

\begin{corollary}
\label{Cor:orthogonal}
Let $V$ be a uniformly convex and uniformly smooth Banach space, and let $\mathcal{M}$ be a closed subspace of $V$.
Then we have the following:
\begin{enumerate}[(a)]
\item For any $q \geq 1$, there exists a positive constant $C_q$, depending only on $q$, such that
\begin{equation}
\label{C_q}
\| \phi \|^q + \| \xi \|^q \leq C_q \| \phi + \xi \|^q,
\quad \phi \in \mathcal{M}, \text{ } \xi \in J^* \mathcal{M}^{\circ}.
\end{equation}
\item For any $\xi \in J^* \mathcal{M}^{\circ}$, we have
\begin{equation*}
\| \xi \| = \min_{w \in \mathcal{M}} \| \xi + w \|.
\end{equation*}
\end{enumerate}
\end{corollary}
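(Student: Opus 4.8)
The plan is to derive both parts directly from Proposition 4.6, using the uniqueness of the orthogonal decomposition together with standard facts about uniformly convex Banach spaces.

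For part (a), I would first observe that by Proposition 4.6(i), every $v = \phi + \xi$ with $\phi \in \mathcal{M}$, $\xi \in J^* \mathcal{M}^{\circ}$ is the unique such decomposition, and that $\phi$ is the best approximation of $v$ from $\mathcal{M}$. In particular $\|\xi\| = \|v - \phi\| = \operatorname{dist}(v, \mathcal{M}) \le \|v - 0\| = \|v\| = \|\phi + \xi\|$, which gives $\|\xi\| \le \|\phi + \xi\|$. From this and the triangle inequality, $\|\phi\| \le \|\phi + \xi\| + \|\xi\| \le 2\|\phi + \xi\|$. Combining, $\|\phi\|^q + \|\xi\|^q \le (2^q + 1)\|\phi+\xi\|^q$, so~\eqref{C_q} holds with $C_q = 2^q + 1$. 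I would note that this argument does not even use uniform convexity beyond what is needed for Proposition 4.6 to make sense; the key point is simply that the $\mathcal{M}$-component arises from a metric projection, so the $J^*\mathcal{M}^{\circ}$-component cannot be larger than the original vector.

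For part (b), let $\xi \in J^* \mathcal{M}^{\circ}$ and take any $w \in \mathcal{M}$. Then $\xi + w$ has the orthogonal decomposition $\xi + w = w + \xi$ with $w \in \mathcal{M}$ and $\xi \in J^*\mathcal{M}^{\circ}$; by the uniqueness in Proposition 4.6(i), the $\mathcal{M}$-best-approximation of $\xi + w$ is exactly $w$, hence by~\eqref{Prop1:orthogonal} the $J^*\mathcal{M}^{\circ}$-component $\xi$ satisfies $\|\xi\| = \|(\xi+w) - w\| = \operatorname{dist}(\xi + w, \mathcal{M}) \le \|\xi + w\|$. Since $w \in \mathcal{M}$ was arbitrary and equality holds at $w = 0$, we conclude $\|\xi\| = \min_{w \in \mathcal{M}} \|\xi + w\|$, with the minimum attained at $w = 0$.

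There is no real obstacle here: the corollary is essentially a repackaging of the best-approximation characterization~\eqref{Prop1:orthogonal} from the proposition. The only mild care needed is to make sure the constant $C_q$ in part (a) is genuinely independent of $\mathcal{M}$ and of the particular $\phi, \xi$ — which it is, since $2^q + 1$ depends on $q$ alone — and to invoke the uniqueness of the decomposition in part (b) so that the metric projection of $\xi + w$ onto $\mathcal{M}$ is correctly identified as $w$.
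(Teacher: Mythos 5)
Your proof is correct and follows essentially the same route as the paper: both rest on the single observation from Proposition~\ref{Prop:orthogonal} that the $\mathcal{M}$-component is a metric projection (so $\|\xi\| \leq \|\phi+\xi\|$ because $0 \in \mathcal{M}$), then derive (a) by the triangle inequality with $C_q = 2^q+1$ and (b) by taking the infimum. The paper simply establishes the key inequality once, reads off (b) from the arbitrariness of $\phi$, and then derives (a); your version reverses the order and spells out the uniqueness step in (b) more explicitly, but there is no substantive difference.
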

\begin{proof}
Take any $\phi \in \mathcal{M}$ and $\xi \in J^* \mathcal{M}^{\circ}$.
By \cref{Prop:orthogonal}, we have $\phi = \operatornamewithlimits{\arg\min}_{w \in \mathcal{M}} \| \phi + \xi - w \|$, which implies $ \| \xi \| \leq \| \phi + \xi \|$.
Since $\phi$ is arbitrary, we deduce that~(b) holds.
To show~(a), we observe that $\| \xi \| \leq \| \phi + \xi \|$ and that $\| \phi \| \leq \| \phi + \xi \|  + \| \xi \| \leq 2 \| \phi + \xi \|$.
Hence, we get $\| \phi \|^q + \| \xi \|^q \leq (2^q + 1 ) \| \phi + \xi \|^q$, which completes the proof.
\end{proof}

\begin{remark}
\label{Rem:Hilbert}
If $V$ is a Hilbert space, then \cref{Prop:orthogonal} holds naturally when we identify the topological dual space $V^*$ with $V$, and the duality pairing with the inner product.
Moreover, in this setting, we have the Pythagorean property, which is a special case of \cref{Cor:orthogonal}; see~\cite[Corollary~5.4]{Brezis:2011}.
\end{remark}

\subsection{Parameter-independent estimates}
Now, we consider the model nearly semicoercive convex optimization of the form
\begin{equation}
\label{model_nearly}
\min_{v \in V} \left\{ F(v) := F_0 (v) + \epsilon F_1 (v) \right\},
\end{equation}
where $V$ is a uniformly convex and uniformly smooth Banach space, $F_0 \colon V \rightarrow \mathbb{R}$ and $F_1 \colon V \rightarrow \mathbb{R}$ are G\^{a}teaux differentiable and convex functionals, and $\epsilon > 0$.
We further assume that $F_0$ is semicoercive with respect to a seminorm $| \cdot |$ in the sense of \cref{Prop:semicoercive}, and that $F_1$ is coercive.

In subspace correction methods for solving~\eqref{model_nearly} under the space decomposition~\eqref{space_decomp}, the local energy functionals $F_j$, $j \in [N]$, must be specified.
We assume that each $F_j$ is given by
\begin{equation*}
F_j (w_j; v) = F_{0,j} (w_j; v) + \epsilon F_{1,j} (w_j; v),
\quad v \in V, \text{ } w_j \in V_j,
\end{equation*}
where $F_{0,j}$ and $F_{1,j}$ are specified in the case of exact local problems as follows:
\begin{equation}
\label{local_exact_nearly}
\begin{aligned}
F_{0,j} (w_j; v) &= F_0 (v + w_j), \\
F_{1,j} (w_j; v) &= F_1 (v + w_j),
\end{aligned}
\quad v \in V, \text{ } w_j \in V_j.
\end{equation}
The general case of inexact local problems~\cite{Park:2020,PX:2024} will be considered in \cref{App:Local}.

Similar as in~\eqref{d_j}, we define
\begin{equation*}
\begin{aligned}
d_{0,j} (w_j; v) &= F_{0,j} (w_j; v) - F_0 (v) - \langle F_0' (v), w_j \rangle, \\
d_{1,j} (w_j; v) &= F_{1,j} (w_j; v) - F_1 (v) - \langle F_1' (v), w_j \rangle,
\end{aligned}
\quad v \in V, \text{ } w_j \in V_j.
\end{equation*}
Then we have
\begin{equation*}
d_j (w_j; v) = d_{0,j} (w_j; v) + \epsilon d_{1,j} (w_j; v),
\quad v \in V, \text{ } w_j \in V_j,
\end{equation*}
where $d_j(w_j; v)$ was given in~\eqref{d_j}.

To guarantee the convergence of the parallel subspace correction method for solving~\eqref{model_nearly}, we require a smoothness assumption on each $d_{0,j} (w_j; v)$ and $d_{1,j} (w_j; v)$, analogous to \cref{Ass:smooth}.
This assumption is formally stated in \cref{Ass:smooth_nearly}.

\begin{assumption}[local smoothness]
\label{Ass:smooth_nearly}
For some $q > 1$, each $d_{0,j} (w_j; v)$ and $d_{1,j} (w_j; v)$, $j \in [N]$, satisfy the following:
for any $\| \cdot \|$-bounded convex subset $K \subset V$ and $\| \cdot \|$-bounded convex subset $K_j \subset V_j$ satisfying $0 \in K_j$, we have
\begin{equation*}
    \sup_{v \in K, w_j \in K_j} \frac{d_{0,j} (w_j; v)}{\| w_j \|^q} < \infty
    \quad \text{and} \quad
    \sup_{v \in K, w_j \in K_j} \frac{d_{1,j} (w_j; v)}{\| w_j \|^q} < \infty.
\end{equation*}
\end{assumption}

For $q > 1$, we define an $\epsilon$-dependent norm $\| \cdot \|_{\epsilon, q}$ on $V$ as
\begin{equation*}
\| v \|_{\epsilon, q} := (| v |^q + \epsilon \| v \|^q)^{\frac{1}{q}},
\quad v \in V.
\end{equation*}
It is straightforward to verify that the norm $\| \cdot \|_{\epsilon, q}$ is equivalent to the original norm $\| \cdot \|$.

Under the smoothness assumption stated in \cref{Ass:smooth_nearly}, \cref{Thm:conv,Thm:conv_sharp}~(see also~\cite{Park:2020,Park:2022a,PX:2024}) indicate that a key factor determining the convergence rate of the parallel subspace correction method~(\cref{Alg:PSC}) for solving~\eqref{model_nearly} is the following constant:
\begin{equation}
\label{C_K_nearly}
C_{K_0} := q \sup_{v, v+w \in K_0} \inf_{\sum_{j=1}^N w_j = w} \frac{\sum_{j=1}^N d_{j} (w_j; v)}{\| w \|_{\epsilon, q}^q},
\end{equation}
where the set $K_0$ was given in~\eqref{K_0}, and each $w_j$ belongs to $V_j$.

\begin{remark}
\label{Rem:K_0_epsilon}
Since the energy functional $F$ in~\eqref{model_nearly} depends on $\epsilon$, the set $K_0$ defined in~\eqref{K_0} also implicitly depends on $\epsilon$. 
Throughout this paper, by an $\epsilon$-independent estimate, we mean an estimate that is independent of $\epsilon$ except for its potential dependence on $K_0$.
We remark that in many applications, such as linear problems, the estimates involving $K_0$ presented in this paper hold uniformly over all bounded convex subsets $K \subset V$.
\end{remark}

In order to derive an $\epsilon$-independent upper bound for $C_{K_0}$, we need to impose additional assumptions on the space decomposition and the local problems.
The first of these assumptions, summarized in \cref{Ass:kernel}, requires that the kernel of the semicoercive functional $F_0$ in~\eqref{model_nearly} can be decomposed into a sum of local kernels~(cf.~\cite[equation~(A1)]{LWXZ:2007}).

\begin{assumption}[kernel decomposition]
\label{Ass:kernel}
The kernel $\mathcal{N} = \ker F_0$ of the semicoercive functional $F_0$ in~\eqref{model_nearly}  admits a decomposition $\mathcal{N} = \sum_{j=1}^N (V_j \cap \mathcal{N})$.
\end{assumption}

The second assumption, summarized in \cref{Ass:triangle}, is that the functional $d_{1,j} (\cdot; v)$ satisfies a triangle inequality-like property.

\begin{assumption}[triangle inequality-like property]
\label{Ass:triangle}
For any bounded convex subset $K \subset V$, there exists a positive constant $C_{K, \mathrm{tri}}$ such that
\begin{equation}
\label{C_tri}
d_{1,j} (v_j + w_j; v) \leq C_{K, \mathrm{tri}} \left( d_{1,j} (v_j; v) + d_{1,j} (w_j; v) \right),
\quad v \in K, \text{ } v_j,w_j \in V_j, \text{ } j \in [N].
\end{equation}
\end{assumption}

In the case of exact local problems~\eqref{local_exact_nearly}, a straightforward sufficient condition for \cref{Ass:triangle} is that the Bregman distance $d_{F_1}$ associated with the functional $F_1$~(cf.~\eqref{d_F}) satisfies the following triangle-inequality-like property:
\begin{equation*}
d_{F_1} (w_1 + w_2; v) \leq C_{K, \mathrm{tri}} \left( d_{F_1} (w_1; v) + d_{F_1} (w_2; v) \right),
\quad v \in K, \text{ } w_1, w_2 \in V,
\end{equation*}
for some positive constant $C_{K, \mathrm{tri}}$.
This property is indeed satisfied by a broad class of convex functionals, making it a practical and verifiable condition.
We provide a detailed discussion of this property in \cref{App:Triangle}.

Now, we are ready to present the main result of this section, \cref{Thm:nearly}, which provides an $\epsilon$-independent convergence rate estimate of the parallel subspace correction method for solving~\eqref{model_nearly}.
We note that \cref{Thm:nearly} generalizes the existing result~\cite[Theorem~3.1]{WZ:2014} for nearly singular linear problems to the context of nearly semicoercive convex optimization problems.

\begin{theorem}
\label{Thm:nearly}
Let $V$ be a uniformly convex and uniformly smooth Banach space.
Suppose that \cref{Ass:kernel,Ass:triangle,Ass:smooth_nearly} hold.
Then the constant $C_{K_0}$ given in~\eqref{C_K_nearly} has an upper bound independent of $\epsilon$.
More precisely, we have
\begin{multline*}
C_{K_0} \leq q C_q \sup_{\substack{v \in K_0, \phi \in \mathcal{N}, \xi \in J^* \mathcal{N}^{\circ}, \\ v+ \phi+\xi \in K_0}} \Bigg[ C_q C_{K_0, \mathrm{tri}} \inf_{\sum_{j=1}^N \phi_j = \phi} \frac{\sum_{j=1}^N d_{1,j} (\phi_j; v)}{\| \phi \|^q} \\
+ \inf_{\sum_{j=1}^N \xi_j = \xi}  \left( \frac{\sum_{j=1}^N d_{0,j} (\xi_j; v)}{| \xi |^q} + C_q C_{K_0, \mathrm{tri}} \frac{\sum_{j=1}^N d_{1,j} (\xi_j; v)}{\| \xi \|^q } \right) \Bigg] < \infty,
\end{multline*}
where $C_{q}$ and $C_{K_0, \mathrm{tri}}$ were given in~\eqref{C_q} and~\eqref{C_tri}, respectively, and each $\phi_j$ and $\xi_j$ are taken from $V_j \cap \mathcal{N}$ and $V_j$, respectively.
\end{theorem}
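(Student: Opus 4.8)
The plan is to bound $C_{K_0}$ by splitting each admissible increment $w$ with $v, v+w \in K_0$ into a component in the kernel $\mathcal{N}$ and a component transverse to it, using the nonlinear orthogonal decomposition of \cref{Prop:orthogonal} applied with $\mathcal{M} = \mathcal{N}$. First I would fix $v, v+w \in K_0$ and write $w = \phi + \xi$ with $\phi \in \mathcal{N}$ and $\xi \in J^*\mathcal{N}^{\circ}$, the decomposition being unique by \cref{Prop:orthogonal}(i). The key quantitative inputs are then: (a) $\| \phi \|^q + \|\xi\|^q \le C_q \|w\|^q$ from \cref{Cor:orthogonal}(a); (b) $|\xi| = |w|$ because $\phi \in \mathcal{N} = \ker|\cdot|$ and $|\cdot|$ is invariant under adding kernel elements, together with $|w| \le |v| + |v+w|$, so the $|\cdot|$-part of $\|w\|_{\epsilon,q}^q$ is entirely carried by $\xi$; and (c) both $v+\phi$ and $v+\phi+\xi = v+w$ lie in a controlled bounded set, so the supremum defining $C_{K_0}$ can be taken over $v \in K_0$, $\phi \in \mathcal{N}$, $\xi \in J^*\mathcal{N}^{\circ}$ with $v + \phi + \xi \in K_0$ (one must also note $\phi$ itself is bounded: $\|\phi\| \le 2\|w\|$ with $\|w\|$ bounded on $K_0$ since $K_0$ is bounded — this uses that $F = F_0 + \epsilon F_1$ with $F_1$ coercive, which should be invoked to make $K_0$ norm-bounded; here I would be careful since for fixed $\epsilon$ this holds but the bound could depend on $\epsilon$, so I would instead track boundedness of the $\phi$-component via the orthogonal decomposition directly from boundedness of $w$ in $\|\cdot\|_{\epsilon,q}$, which is what actually appears).

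Next I would construct a decomposition of $w$ over the subspaces. Using \cref{Ass:kernel}, decompose $\phi = \sum_{j=1}^N \phi_j$ with $\phi_j \in V_j \cap \mathcal{N}$; independently decompose $\xi = \sum_{j=1}^N \xi_j$ with $\xi_j \in V_j$ (an arbitrary space decomposition of $\xi$, whose stability is governed by \eqref{norm_stable}). Then $w_j := \phi_j + \xi_j \in V_j$ and $\sum_j w_j = \phi + \xi = w$, so this is admissible in the infimum defining $C_{K_0}$. Now estimate $\sum_j d_j(w_j; v) = \sum_j d_{0,j}(w_j;v) + \epsilon \sum_j d_{1,j}(w_j;v)$. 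For the $d_{0,j}$ terms: since $\phi_j \in \mathcal{N} = \ker F_0$, adding $\phi_j$ does not change the $F_0$-energy, so (in the exact-local-problem case via \eqref{local_exact_nearly}, and more generally by the defining structure) $d_{0,j}(w_j;v) = d_{0,j}(\xi_j;v)$ — this is the crucial cancellation that kills the kernel direction in the singular part. For the $d_{1,j}$ terms: apply \cref{Ass:triangle} to split $d_{1,j}(\phi_j + \xi_j; v) \le C_{K_0,\mathrm{tri}}(d_{1,j}(\phi_j;v) + d_{1,j}(\xi_j;v))$. This gives
\[
\sum_{j} d_j(w_j;v) \le \sum_j d_{0,j}(\xi_j;v) + \epsilon C_{K_0,\mathrm{tri}} \sum_j d_{1,j}(\phi_j;v) + \epsilon C_{K_0,\mathrm{tri}} \sum_j d_{1,j}(\xi_j;v).
\]

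Then I would divide by $\|w\|_{\epsilon,q}^q = |w|^q + \epsilon\|w\|^q$ and bound each group by choosing an \emph{optimal} sub-decomposition. For the first group, since $|w| = |\xi|$, bound $\sum_j d_{0,j}(\xi_j;v) / \|w\|_{\epsilon,q}^q \le \sum_j d_{0,j}(\xi_j;v)/|\xi|^q$ (dropping the $\epsilon\|w\|^q$ in the denominator), then minimize over decompositions $\sum_j \xi_j = \xi$. For the $\phi$-group, $\epsilon / \|w\|_{\epsilon,q}^q \le \epsilon/(\epsilon\|w\|^q) = 1/\|w\|^q \le C_q/\|\phi\|^q$ by \cref{Cor:orthogonal}(a) applied as $\|\phi\|^q \le C_q\|w\|^q$ — wait, that inequality goes the wrong way; instead I use $\|\phi\|^q + \|\xi\|^q \le C_q\|w\|^q$ hence $\|w\|^q \ge (\|\phi\|^q+\|\xi\|^q)/C_q$, giving $\epsilon/\|w\|_{\epsilon,q}^q \le C_q/(\|\phi\|^q + \|\xi\|^q) \le C_q/\|\phi\|^q$, and similarly $\le C_q/\|\xi\|^q$ for the $\xi$-group — so each of the $d_{1,j}$ groups gets a factor $C_q C_{K_0,\mathrm{tri}}$ times an infimum over the appropriate sub-decomposition. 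Assembling, multiplying by $q$, and noting the overall $C_q$ picked up from $\|w\|^q \le$ needing also to compare to individual $\|\phi\|^q, \|\xi\|^q$, yields exactly the displayed bound. Finiteness of the resulting supremum then follows by invoking \cref{Ass:smooth_nearly}: each of $\sum_j d_{0,j}(\xi_j;v)/|\xi|^q$, $\sum_j d_{1,j}(\phi_j;v)/\|\phi\|^q$, $\sum_j d_{1,j}(\xi_j;v)/\|\xi\|^q$ is bounded on the relevant bounded set, using the stability of the chosen decompositions (the kernel decomposition of \cref{Ass:kernel} is stable by the open mapping theorem applied to $\mathcal{N} = \sum_j(V_j\cap\mathcal{N})$, exactly as in \cref{Lem:seminorm_stable}, and the $\xi$-decomposition is stable by \eqref{norm_stable}) to control each $\|\phi_j\|, \|\xi_j\|$. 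The main obstacle I anticipate is the bookkeeping around \emph{which} denominators get dropped and the careful tracking of the constants $C_q$ from \cref{Cor:orthogonal} so that the final inequality matches the stated one exactly; a secondary subtlety is ensuring the identity $d_{0,j}(w_j;v) = d_{0,j}(\xi_j;v)$ genuinely holds — it rests on $F_0$ being invariant under $\mathcal{N}$ and the linearity of $\langle F_0'(v), \cdot\rangle$ on the kernel direction ($\langle F_0'(v),\phi_j\rangle = 0$ since $\phi_j \in \ker F_0$), which is fine for exact local problems and must be checked to be part of the inexact-problem hypotheses deferred to \cref{App:Local}.
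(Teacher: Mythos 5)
Your argument follows the paper's proof essentially step for step: the nonlinear orthogonal decomposition $w = \phi + \xi$ from \cref{Prop:orthogonal}, the kernel decomposition of $\phi$ via \cref{Ass:kernel}, the cancellation $d_{0,j}(\phi_j + \xi_j; v) = d_{0,j}(\xi_j; v)$, the triangle-inequality bound on $d_{1,j}$ from \cref{Ass:triangle}, the denominator lower bound $\|\phi+\xi\|_{\epsilon,q}^q \geq |\xi|^q + C_q^{-1}\epsilon(\|\phi\|^q + \|\xi\|^q)$ via \cref{Cor:orthogonal}, and finiteness via boundedness of $\phi,\xi$ together with \cref{Ass:smooth_nearly}. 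The mid-proof self-correction about which direction the $C_q$ inequality goes lands on the same estimate the paper uses, so the proposal is correct and matches the paper's approach.
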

\begin{proof}
Invoking \cref{Prop:orthogonal}, we have
\begin{equation}
\label{Thm1:nearly}
C_{K_0} = q \sup_{\substack{v \in K_0, \phi \in \mathcal{N}, \xi \in J^* \mathcal{N}^{\circ}, \\ v+ \phi+\xi \in K_0}} \inf_{\sum_{j=1}^N \phi_j = \phi, \sum_{j=1}^N \xi_j = \xi}  \frac{\sum_{j=1}^N d_j (\phi_j + \xi_j; v)}{\| \phi + \xi \|_{\epsilon, q}^q}.
\end{equation}
To estimate the right-hand side of~\eqref{Thm1:nearly}, we choose any $v \in K_0$, $\phi \in \mathcal{N}$, and $\xi \in J^* \mathcal{N}^{\circ}$ such that $v + \phi + \xi \in K_0$.
For simplicity, we assume that $\phi \neq 0$ and $\xi \neq 0$; the cases where either $\phi = 0$ or $\xi = 0$ are straightforward.
Thanks to~\eqref{space_decomp} and~\cref{Ass:kernel}, we can decompose $\phi$ and $\xi$ as $\phi = \sum_{j=1}^N \phi_j$ and $\xi = \sum_{j=1}^N \xi_j$, where each $\phi_j$ belongs to $V_j \cap \mathcal{N}$ and each $\xi_j$ belongs to $V_j$.

We first deduce an upper bound for $\sum_{j=1}^N d_j (\phi_j + \xi_j; v)$~(cf.~\cite[Lemma~4.5]{LWXZ:2007}):
\begin{equation}
\label{Thm2:nearly}
\begin{split}
\sum_{j=1}^N d_j (\phi_j + \xi_j; v)
&= \sum_{j=1}^N d_{0,j} (\xi_j; v) + \epsilon \sum_{j=1}^N d_{1,j} (\phi_j + \xi_j; v) \\
&\leq \sum_{j=1}^N d_{0,j} (\xi_j ; v) + C_{K_0, \mathrm{tri}} \epsilon \sum_{j=1}^N \left( d_{1,j} (\phi_j; v) + d_{1,j} (\xi_j; v) \right),
\end{split}
\end{equation}
where the inequality is due to \cref{Ass:triangle}.
Next, by invoking \cref{Cor:orthogonal}(a), we obtain a lower bound for $\| \phi + \xi \|_{\epsilon, q}^q$ as follows:
\begin{equation}
\label{Thm3:nearly}
\| \phi + \xi \|_{\epsilon, q}^q
= | \xi |^q + \epsilon \| \phi + \xi \|^q
\geq | \xi |^q + C_q^{-1} \epsilon \left( \| \phi \|^q + \| \xi \|^q \right).
\end{equation}
Combining~\eqref{Thm2:nearly} and~\eqref{Thm3:nearly} yields
\begin{equation}
\label{Thm4:nearly}
\frac{\sum_{j=1}^N d_j (\phi_j + \xi_j; v)}{\| \phi + \xi \|_{\epsilon, q}^q}
\leq \frac{\sum_{j=1}^N d_{0,j} (\xi_j; v)}{ | \xi |^q}
+ C_q C_{K_0, \mathrm{tri}} \sum_{j=1}^N \left( \frac{d_{1,j} (\phi_j; v)}{\| \phi \|^q} + \frac{d_{1,j} (\xi_j; w)}{\| \xi \|^q} \right).
\end{equation}
Since the decompositions $\phi = \sum_{j=1}^N \phi_j$ and $\xi = \sum_{j=1}^N \xi_j$ were arbitrarily chosen, by invoking~\eqref{Thm1:nearly} and~\eqref{Thm4:nearly}, we obtain
\begin{multline}
\label{Thm5:nearly}
C_{K_0} \leq q \sup_{\substack{v \in K_0, \phi \in \mathcal{N}, \xi \in J^* \mathcal{N}^{\circ}, \\ v+ \phi+\xi \in K_0}} \Bigg[ C_q C_{K_0, \mathrm{tri}} \inf_{\sum_{j=1}^N \phi_j = \phi} \frac{\sum_{j=1}^N d_{1,j} (\phi_j; v)}{\| \phi \|^q} \\ + \inf_{\sum_{j=1}^N \xi_j = \xi}  \left( \frac{\sum_{j=1}^N d_{0,j} (\xi_j; v)}{| \xi |^q} + C_q C_{K_0, \mathrm{tri}} \frac{\sum_{j=1}^N d_{1,j} (\xi_j; v)}{\| \xi \|^q } \right) \Bigg].
\end{multline}

It remains to show that the right-hand side of~\eqref{Thm5:nearly} is finite.
We write
\begin{equation*}
M_{K_0} := \sup_{v \in K_0} \| v \| < \infty.
\end{equation*}
Then, for any $v \in K_0$, $\phi \in \mathcal{N}$, and $\xi \in J^* \mathcal{N}^{\circ}$ such that $v + \phi + \xi \in K_0$, we have
\begin{equation*}
\| \xi \| \stackrel{\text{(i)}}{\leq} \| \phi + \xi \|
\leq  \| v \| + \| v + \phi + \xi \| \leq 2 M_{K_0}
\end{equation*}
and
\begin{equation*}
\| \phi \| \leq \| v + \phi + \xi \| + \| v \| + \| \xi \| \leq 3 M_{K_0},
\end{equation*}
where (i) is due to \cref{Cor:orthogonal}(b).
In addition, thanks to~\eqref{Lem1:seminorm} and \cref{Cor:orthogonal}(b), we have
\begin{equation*}
| \xi |^q \geq C \inf_{w \in \mathcal{N}} \| \xi + w \|^q = C \| \xi \|^q.
\end{equation*}
for some positive constant $C$.
With these bounds, the finiteness of the right-hand side of~\eqref{Thm5:nearly} follows directly from a similar argument as in the proof of \cref{Lem:stable}.
\end{proof}

As presented in \cref{Thm:conv_sharp}, an enhanced convergence rate estimate for the parallel subspace correction method can be achieved if we have an additional assumption that the energy functional is sharp.
In the following, we present a relevant result for nearly semicoercive problems.
First, we present in \cref{Ass:uniform}, which introduces an appropriate sharpness assumption for the nearly semicoercive problem~\eqref{model_nearly}, where $d_{F_0}$ and $d_{F_1}$ are defined in the same manner as in~\eqref{d_F}.
It is readily observed that \cref{Ass:uniform} provides a sufficient condition to guarantee that the energy functional $F$ in~\eqref{model_nearly} satisfies \cref{Ass:sharp}.

\begin{assumption}[uniform convexity]
\label{Ass:uniform}
For some $p > 1$, we have the following:
\begin{enumerate}[(a)]
\item For any $| \cdot |$-bounded convex subset $K$ of $V$, we have
\begin{equation*}
    \mu_{0,K} := p \inf_{v, v+w \in K} \frac{d_{F_0} (w; v)}{| w |^p} > 0.
\end{equation*}
\item For any $\| \cdot \|$-bounded convex subset $K$ of $V$, we have
\begin{equation*}
    \mu_{1,K} := p \inf_{v, v+w \in K} \frac{d_{F_0} (w; v) + d_{F_1} (w; v)}{\| w \|^p} > 0.
\end{equation*}
\end{enumerate} 
\end{assumption}

Under \cref{Ass:smooth_nearly,Ass:uniform}, \cref{Thm:conv_sharp} indicates that the constant $\mu_{K_0}$ given below is a critical factor in determining the convergence rate of the subspace correction method, alongside $C_{K_0}$:
\begin{equation}
\label{mu_K_nearly}
\mu_{K_0} := p \inf_{v \in K_0} \frac{F(v) - F(u)}{\| v - u \|_{\epsilon, q}^p},
\end{equation}
where the set $K_0$ was given in~\eqref{K_0}.
The following theorem states that $\mu_{K_0}$ has a lower bound independent of $\epsilon$ if $\epsilon$ is small enough.

\begin{theorem}
\label{Thm:nearly_sharp}
Let $V$ be a uniformly convex and uniformly smooth Banach space.
Suppose that \cref{Ass:smooth_nearly,Ass:uniform} hold with $p \geq q$.
If $\epsilon \in (0, 1/2]$, then the constant $\mu_{K_0}$ given in~\eqref{mu_K_nearly} has a lower bound independent of $\epsilon$.
\end{theorem}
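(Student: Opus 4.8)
The plan is to bound below the ratio $(F(v)-F(u))/\|v-u\|_{\epsilon,q}^p$ for $v \in K_0$ by a quantity that does not degenerate as $\epsilon \to 0$, by exploiting the orthogonal decomposition of the error $v-u$ relative to the kernel $\mathcal{N} = \ker F_0$. First I would fix $v \in K_0$ and, invoking \cref{Prop:orthogonal} applied to the subspace $\mathcal{M} = \mathcal{N}$, write $v - u = \phi + \xi$ with $\phi \in \mathcal{N}$ and $\xi \in J^*\mathcal{N}^\circ$. The denominator is controlled from above using \cref{Cor:orthogonal}(a) together with the equivalence $| \cdot | \sim \| \cdot + \mathcal{N}\|_{V/\mathcal{N}}$ and \cref{Cor:orthogonal}(b): since $|\xi| \leq C'\|\xi\|$ and $|\phi| = 0$, we get $\|v-u\|_{\epsilon,q}^q = |\xi|^q + \epsilon\|\phi+\xi\|^q \lesssim \|\xi\|^q + \epsilon(\|\phi\|^q + \|\xi\|^q) \lesssim \|\xi\|^q + \epsilon\|\phi\|^q$, hence (raising to the power $p/q \geq 1$ and using boundedness of $K_0$ so that all these quantities are bounded) $\|v-u\|_{\epsilon,q}^p \lesssim \|\xi\|^p + \epsilon\|\phi\|^p$, where the implied constants depend only on $p,q$, the embedding constant, and $\mathrm{diam}(K_0)$, not on $\epsilon$.

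For the numerator, the key point is that $F_0$ is flat along $\mathcal{N}$, so $d_{F_0}$ only sees the $\xi$-component, while $F_1$ (being coercive and, by \cref{Ass:uniform}(b), $p$-uniformly convex in $\| \cdot \|$) supplies positivity in the $\phi$-direction. Concretely, since $F$ is convex and differentiable with $u$ a minimizer, $F(v) - F(u) \geq d_F(v-u; u) = d_{F_0}(v-u;u) + \epsilon\, d_{F_1}(v-u;u)$. For the first term, $d_{F_0}(\phi+\xi;u) = d_{F_0}(\xi;u)$ because $F_0$ and $F_0'$ are unchanged under translation by $\mathcal{N}$ (so $\langle F_0'(u),\phi\rangle = 0$ and $F_0(u+\phi+\xi) = F_0(u+\xi)$), and then \cref{Ass:uniform}(a) gives $d_{F_0}(\xi;u) \geq (\mu_{0,K_0}/p)\,|\xi|^p$. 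For the second term, \cref{Ass:uniform}(b) gives $\epsilon\, d_{F_1}(\phi+\xi;u) \geq \epsilon(\mu_{1,K_0}/p)\,\|\phi+\xi\|^p \geq \epsilon (\mu_{1,K_0}/p) C_p^{-1}\,\|\phi\|^p$ via \cref{Cor:orthogonal}(a). Combining, $F(v)-F(u) \geq c_1 |\xi|^p + \epsilon\, c_2 \|\phi\|^p$ with $c_1,c_2 > 0$ independent of $\epsilon$. Finally, since $|\xi|^p \geq C''\|\xi\|^p$ (again from \eqref{Lem1:seminorm} and \cref{Cor:orthogonal}(b)), both numerator and denominator are comparable to $\|\xi\|^p + \epsilon\|\phi\|^p$, and the ratio is bounded below by $\min\{c_1 C'', \epsilon c_2\}/(\text{const} \cdot \max\{1,\epsilon\})$ times a constant; since $\epsilon \in (0,1)$, the $\epsilon$-factors in numerator and denominator cancel in the $\phi$-part and the $\epsilon$-free $\xi$-part dominates otherwise, yielding $\mu_{K_0} \geq c > 0$ with $c$ independent of $\epsilon$. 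One then takes the infimum over $v \in K_0$; finiteness of the relevant suprema/diameters is guaranteed by the semicoercivity of $F$ (which makes $K_0$ bounded), exactly as in the proof of \cref{Lem:stable}.

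The main obstacle I anticipate is bookkeeping the interaction between the two different "sizes" — $|\xi|$ controlling the $F_0$-contribution and $\|\phi\|$ (weighted by $\epsilon$) controlling the $F_1$-contribution — and verifying that after raising $q$-th powers to $p$-th powers (legitimate since $p \geq q$ and all quantities are bounded on $K_0$) the numerator and the denominator remain comparable \emph{uniformly} in $\epsilon \in (0,1)$. In particular one must be careful that the cross terms in expanding $\|\phi+\xi\|^p$ do not spoil the two-sided bounds; this is handled cleanly by \cref{Cor:orthogonal}(a), which is precisely the nonlinear substitute for the Pythagorean identity. A secondary point is to confirm that $\|v-u\|_{\epsilon,q} \to 0$ is not an issue at the minimizer, i.e.\ that the infimum in \eqref{mu_K_nearly} is genuinely positive and not merely nonnegative — but this follows from the strict positivity in \cref{Ass:uniform} applied on the bounded set $K_0$, exactly as \cref{Ass:sharp} is derived from \cref{Ass:uniform}.
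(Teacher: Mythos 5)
Your proposal is correct in spirit but takes a substantially more circuitous route than the paper, and in doing so you create technical loose ends that the paper's argument simply avoids. The paper never invokes the orthogonal decomposition $v-u=\phi+\xi$ for this theorem; that machinery is needed for estimating $C_{K_0}$ in \cref{Thm:nearly}, but not for $\mu_{K_0}$. The paper's proof works with $w=v-u$ directly: bound the denominator by
\begin{equation*}
\| w \|_{\epsilon,q}^p = \bigl(|w|^q + \epsilon\|w\|^q\bigr)^{p/q} \le 2^{p/q-1}\bigl(|w|^p + \epsilon^{p/q}\|w\|^p\bigr) \le 2^{p/q-1}\bigl(|w|^p + \epsilon\|w\|^p\bigr),
\end{equation*}
using the elementary convexity inequality and $\epsilon^{p/q}\le\epsilon$ (valid since $\epsilon\in(0,1)$ and $p\ge q$), and bound the numerator from below by $F(v)-F(u)\ge d_{F_0}(w;u)+\epsilon\, d_{F_1}(w;u)\ge (\mu_{0,K_0}/p)|w|^p + \epsilon(\mu_{1,K_0}/p)\|w\|^p$, applying \cref{Ass:uniform} directly with the \emph{same} increment $w$ in both parts. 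The lower bound $\min\{\mu_{0,K_0},\mu_{1,K_0}\}/(p\,2^{p/q-1})$ follows by comparing coefficients in the linear combinations $a|w|^p+\epsilon b\|w\|^p$ in numerator and denominator, since the $\epsilon$-weight on $\|w\|^p$ is the same on both sides.

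Your decomposition $w=\phi+\xi$ is harmless but unnecessary here, because $|w|=|\phi+\xi|=|\xi|$ anyway (so splitting gains nothing in the $F_0$ term) and the denominator already contains $\|w\|^p$, so there is no need to pass through $\|\phi\|^p$ via \cref{Cor:orthogonal}(a). Two consequences of the extra detour deserve flagging. First, when you apply \cref{Ass:uniform}(a) to the pair $(u,\,u+\xi)$ rather than $(u,\,v)$, the point $u+\xi$ need not lie in $K_0$, so the constant you get is $\mu_{0,K}$ for some slightly larger $|\cdot|$-bounded convex set $K$, not $\mu_{0,K_0}$; this is still $\epsilon$-independent, but it is an unmotivated enlargement that the direct argument avoids. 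Second, your concluding expression ``$\min\{c_1 C'',\epsilon c_2\}/(\text{const}\cdot\max\{1,\epsilon\})$'' is not what the estimate gives: since the $\epsilon$'s multiply the same $\|\phi\|^p$ term in numerator and denominator, they cancel and the correct bound is $\min\{c_1 C'', c_2\}/C_3$ with no $\epsilon$ appearing. You do say in words that the $\epsilon$-factors cancel, so the intent is right, but the displayed quantity is $\epsilon$-dependent as written and would not establish the theorem. The short lesson is that \cref{Ass:uniform} is already tailored so that a single application to $w=v-u$, combined with the elementary power inequality on $\|\cdot\|_{\epsilon,q}$, gives the result; no nonlinear projection is needed.
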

\begin{proof}
Take any $v \in K_0 \setminus \{ u \}$, and let $w = v - u$.
We derive an upper bound for $\| w \|_{\epsilon, q}^p$ as follows:
\begin{equation}
\label{Prop1:sharp}
\| w \|_{\epsilon, q}^p = (|w|^q + \epsilon \| w \|^q )^{\frac{p}{q}}
\leq 2^{\frac{p}{q} - 1} (|w|^p + \epsilon^{\frac{p}{q}} \| w \|^p)
\leq 2^{\frac{p}{q} - 1} (|w|^p + \epsilon \|w \|^p),
\end{equation}
where the first inequality follows from the elementary inequality
\begin{equation*}
(a + b)^s \leq 2^{s-1} (a^s + b^s),
\quad a,b \geq 0, \text{ } s \geq 1,
\end{equation*}
and the second inequality is because of $\epsilon < 1$.
Note that the set $K_0$ is both $| \cdot |$-bounded and $\| \cdot \|$-bounded.
Hence, it follows that
\begin{multline*}
\mu_{K_0} \stackrel{\eqref{Prop1:sharp}}{\geq} \frac{d_{F_0} (w; u) + \epsilon d_{F_1} (w; u)}{2^{\frac{p}{q}-1} ( |w |^p + \epsilon \|w \|^p)}
\stackrel{\text{(i)}}{\geq} \frac{d_{F_0} (w; u) + \epsilon (d_{F_0}(w; u) + d_{F_1} (w; u))}{2^{\frac{p}{q}} ( |w |^p + \epsilon \|w \|^p)} \\
\stackrel{\text{(ii)}}{\geq} \frac{\mu_{0,K_0} |w|^p + \epsilon \mu_{1,K_0} \| w \|^p}{p 2^{\frac{p}{q}} ( |w |^p + \epsilon \|w \|^p)}
\geq \frac{\min \{ \mu_{0,K_0}, \mu_{1,K_0} \}}{p 2^{\frac{p}{q}}},
\end{multline*}
which completes the proof, where (i) is because of $1 - \epsilon \geq 1/2$ and (ii) is due to \cref{Ass:uniform}.
\end{proof}




\section{Applications}
\label{Sec:Applications}
In this section, we present several applications of the proposed convergence theory for subspace correction methods.
We analyze the convergence of two-level domain decomposition methods for solving some nonlinear PDEs with associated energy functionals that are either semicoercive or nearly semicoercive.

Let $\Omega$ be a bounded polyhedral domain in $\mathbb{R}^d$, and let $\mathcal{T}_h$ be a quasi-uniform triangulation of $\Omega$ with $h$ the characteristic element diameter.
We denote by $S_h (\Omega)$ the continuous and piecewise linear finite element space defined on $\mathcal{T}_h$:
\begin{equation*}
S_h (\Omega) = \left\{ v \in C (\Omega) : v |_T \in \mathbb{P}_1 (T) \text{ for all } T \in \mathcal{T}_h \right\}.
\end{equation*}

Assume that we also have a quasi-uniform triangulation $\mathcal{T}_H$, with $\mathcal{T}_h$ a refinement of $\mathcal{T}_H$.
We define the coarse finite element space $\mathcal{S}_H (\Omega)$ similarly to $S_h (\Omega)$:
\begin{equation*}
S_H (\Omega) = \left\{ v \in C (\Omega) : v |_T \in \mathbb{P}_1 (T) \text{ for all } T \in \mathcal{T}_H \right\}.
\end{equation*}
Since $S_H (\Omega) \subset S_h (\Omega)$, the natural embedding operator $I_0 \colon S_H (\Omega) \rightarrow S_h (\Omega)$ is well-defined.

Let $\{ \Omega_j \}_{j=1}^N$ be a quasi-uniform overlapping domain decomposition of $\Omega$, where each subdomain $\Omega_j$ is a union of $\mathcal{T}_h$-elements and has diameter of order $H$.
The overlap width among the subdomains is measured by a parameter $\delta$.
For each $j \in [N]$, we define the local finite element space $S_h (\Omega_j)$ as follows:
\begin{equation*}
S_h (\Omega_j) = \left\{ v \in C (\Omega_j) : v|_T \in \mathbb{P}_1 (T) \text{ for all } T \in \mathcal{T}_h |_{\Omega_j} ,
\text{ } v = 0 \text{ on } \partial \Omega_j \setminus \partial \Omega \right\}.
\end{equation*}
The operator $I_j \colon S_h (\Omega_j) \rightarrow S_h (\Omega)$ is then defined as the extension-by-zero operator.

In what follows, the notation $A \lesssim B$ means that there exists a constant $c > 0$ independent of $h$, $H$, and $\delta$, such that $A \leq c B$.

\subsection{A semicoercive problem}
The $s$-Laplacian equation is an important nonlinear PDE; see~\cite{Loisel:2020}, where an efficient numerical method was proposed, as well as the references therein.
In this paper, we consider the corresponding Neumann boundary value problem:
\begin{align*}
- \nabla \cdot \left( |\nabla u|^{s-2} \nabla u \right) = f \quad &\textrm{ in } \Omega, \\
\frac{\partial u}{\partial \nu} = 0 \quad &\textrm{ on } \partial \Omega,
\end{align*}
where $s > 1$, $f \in (W^{1,s}(\Omega))^*$, and $\nu$ is the unit outer normal to $\partial \Omega$.
For this problem to be solvable, it is required that $f$ satisfies the compatibility condition $\langle f, 1 \rangle = 0$.
This equation is well-known to have the following variational formulation:
\begin{equation*}
\min_{v \in W^{1,s} (\Omega)} \left\{ \frac{1}{s} \int_{\Omega} |\nabla v|^s \,dx - \left< f, v \right> \right\}.
\end{equation*}
To solve this variational problem numerically, we consider the following finite element discretization defined on $S_h (\Omega)$:
\begin{equation}
\label{sLap_FEM}
\min_{v \in S_h (\Omega)} \left\{ \frac{1}{s} \int_{\Omega} |\nabla v|^s \,dx - \left< f, v \right> \right\}.
\end{equation}
Error estimates for~\eqref{sLap_FEM} can be found in, e.g.,~\cite{BL:1993}.
We observe that~\eqref{sLap_FEM} is a specific instance of~\eqref{model}.
Namely, we obtain~\eqref{sLap_FEM}  by setting
\begin{equation*}
V = S_h (\Omega),
\quad
F(v) = \frac{1}{s} \int_{\Omega} | \nabla v |^s \,dx - \langle f, v \rangle.
\end{equation*}
in~\eqref{model}.
It is straightforward to verify that $F$ is semicoercive with respect to the $W^{1,s} (\Omega)$-seminorm, whose kernel is $\mathcal{N} = \operatorname{span} \{ 1 \}$.

In the following, we analyze a two-level additive Schwarz method for solving the Neumann boundary value problem~\eqref{sLap_FEM}.
More precisely, we prove that the two-level additive Schwarz method is scalable in the sense that the dependence of its convergence rate on the geometric parameters $h$, $H$, and $\delta$ is only through the ratio $H / \delta$~\cite{TW:2005}.
Note that the case of the Dirichlet boundary condition was considered in several existing works, e.g.,~\cite{LP:2024,TX:2002}.
We define the subspaces $\{ V_j \}_{j=0}^N$ of $V = S_h (\Omega)$ as follows:
\begin{equation}
\label{two_level}
V_0 = I_0 S_H (\Omega),
\quad V_j = I_j S_h (\Omega_j),
\quad j \in [N],
\end{equation}
so that we have the two-level space decomposition
\begin{equation*}
V = V_0 + \sum_{j=1}^N V_j.
\end{equation*}
If we employ this two-level space decomposition and the exact local problems~\eqref{local_exact} in the parallel subspace correction method presented in \cref{Alg:PSC}, then we obtain the two-level additive Schwarz method.

Thanks to \cref{Thm:conv_sharp}, it suffices to verify \cref{Ass:smooth,Ass:sharp} and estimate the constants $\tau_0$, $\mu_{K_0}$, and $C_{K_0}$ given in~\eqref{tau_0},~\eqref{K_0}, and~\eqref{C_K}, respectively, to estimate the convergence rate of \cref{Alg:PSC}.
The strengthened convexity parameter $\tau_0$ has a lower bound $\tau_0 \geq \frac{1}{5}$, due to a usual coloring argument~\cite[Section~5.1]{Park:2020}.
Proceeding as in~\cite[Section~6.1]{Park:2020}, we can verify that \cref{Ass:smooth,Ass:sharp} hold with $p = \max \{s, 2\}$, $q = \min \{s, 2 \}$, and $\mu_{K_0} \gtrsim 1$~(cf.~\cite[Equations~(6.6) and~(6.7)]{Park:2020}).

To estimate $C_{K_0}$, we examine the dependence of constants appearing in the semicoercive analysis in \cref{Sec:Semicoercive} on $H/\delta$.
The Poincar\'{e} inequality implies that~\eqref{Lem1:seminorm} can be written as
\begin{equation}
\label{sLap_analysis1}
\inf_{\phi \in \mathcal{N}} \| v + \phi \|_{W^{1,s} (\Omega)} \lesssim  | v |_{W^{1,s} (\Omega)},
\quad v \in V.
\end{equation}
Moreover, by~\cite[Lemma~4.1]{TX:2002},~\eqref{norm_stable} takes the form
\begin{equation}
\label{sLap_analysis2}
\sup_{\| w \|_{W^{1,s} (\Omega)} = 1} \inf_{w = \sum_{j=1}^N w_j} \left( \sum_{j=1}^N \| w_j \|_{W^{1,s} (\Omega)}^q \right)^{\frac{1}{q}} \lesssim C_{H/\delta},
\end{equation}
where $C_{H/\delta}$ denotes a generic constant that depends on the geometric parameters only through $H/\delta$.
Combining~\eqref{sLap_analysis1} and~\eqref{sLap_analysis2}, and following the proof of \cref{Lem:seminorm_stable}, we obtain
\begin{equation}
\label{sLap_analysis3}
\sup_{| w |_{W^{1,s} (\Omega)} = 1} \inf_{\phi \in \mathcal{N}} \inf_{\sum_{j=1}^N w_j = w + \phi} \left( \sum_{j=1}^N \| w_j \|^q \right)^{\frac{1}{q}} \lesssim C_{H/\delta}.
\end{equation}
In the proof of \cref{Lem:stable}, \eqref{sLap_analysis3} implies that the constant $C$ in~\eqref{Lem2:stable}, and hence each $L_j$ in~\eqref{Lem3:stable}, depends on the geometry only through $H/\delta$.
Consequently, \eqref{Lem4:stable} implies that $C_{K_0}$ also admits an upper bound whose geometric dependence is only on $H/\delta$.
In conclusion, by \cref{Thm:conv_sharp}, the two-level additive Schwarz method for solving~\eqref{sLap_FEM} satisfies the following convergence estimate:
\begin{equation*}
F(u^{(n)}) - F(u) \lesssim \frac{C_{H/\delta}}{n^{\frac{p(q-1)}{p - q}}}, \quad n \geq 0.
\end{equation*}

\begin{remark}
\label{Rem:TV}
An interesting extreme case of the problem discussed here arises when $s = 1$, corresponding to total variation minimization, which is important in mathematical imaging~\cite{LP:2020}.
In this case, the convergence theory presented in this paper is not directly applicable since the energy functional in~\eqref{sLap_FEM} becomes nondifferentiable.
Indeed, a counterexample demonstrating nonconvergence of a subspace correction method is given in~\cite{LN:2017}.
To address this issue, one approach is to regularize the problem to make it differentiable; for example, see~\cite{TWZZ:2023}, where an efficient numerical method was developed from this perspective.
Alternatively, one may consider subspace correction methods applied to a suitable dual formulation, as discussed in~\cite{CTWY:2015,LP:2019}.
\end{remark}

\subsection{A nearly semicoercive problem}
As a next example, we consider a Poisson-type equation with a nonlinear mass term, which was also considered in~\cite{CHW:2020,Park:2024a}, given by
\begin{align*}
- \Delta u + \epsilon |u|^{s-2} u = f \quad &\textrm{ in } \Omega, \\
\frac{\partial u}{\partial \nu} = 0 \quad &\textrm{ on } \partial \Omega,
\end{align*}
where $s \in [2, \infty)$ when $d = 2$ and $s \in [2, 6]$ when $d = 3$, $f \in (H^1 (\Omega))^*$, and $\epsilon > 0$.
The above equation admits the following variational formulation~\cite[Theorem~7.1]{CHW:2020}:
\begin{equation*}
\min_{v \in H^{1} (\Omega)} \left\{ \frac{1}{2} \int_{\Omega} |\nabla v|^2 \,dx + \frac{\epsilon}{s} \int_{\Omega} |v|^s \,dx - \left< f, v \right> \right\}.
\end{equation*}
The finite element discretization of this variational formulation defined on $S_h (\Omega)$ is given by
\begin{equation}
\label{perturbed_FEM}
\min_{v \in S_h (\Omega)} \left\{ \frac{1}{2} \int_{\Omega} |\nabla v|^2 \,dx + \frac{\epsilon}{s} \int_{\Omega} |v|^s \,dx - \left< f, v \right> \right\}.
\end{equation}
We observe that, if we set
\begin{equation*}
V = S_h (\Omega),
\quad F_0 (v) = \frac{1}{2} \int_{\Omega} | \nabla v |^2 \,dx - \langle f, v \rangle,
\quad F_1 (v) = \frac{1}{s} \int_{\Omega} |v|^s \,dx
\end{equation*}
in the abstract nearly semicoercive problem~\eqref{model_nearly}, then we obtain~\eqref{perturbed_FEM}.

In the following, we prove that the convergence rate of the two-level additive Schwarz method, specifically \cref{Alg:PSC} equipped with the two-level space decomposition~\eqref{two_level} and the exact local problems, for solving~\eqref{perturbed_FEM}, achieves an $\epsilon$-independent estimate when $\epsilon$ is sufficiently small.
In the framework introduced in \cref{Sec:Nearly}, we set the seminorm $| \cdot |$ and norm $\| \cdot \|$ as $| \cdot | = | \cdot |_{H^1 (\Omega)}$ and $\| \cdot \| = \| \cdot \|_{H^1 (\Omega)}$, respectively.
Using a similar argument as in the previous example and the Sobolev inequality
\begin{equation*}
\| u \|_{L^2 (\Omega)} \lesssim \| u \|_{L^s (\Omega)} \lesssim \| u \|_{H^1 (\Omega)},
\end{equation*}
together with the inequality~(cf.~\cite[Lemma~2.1]{BL:1993})
\begin{equation*}
\| w \|_{L^s (\Omega)}^s
\lesssim d_{F_1} (w; v)
\lesssim \| w \|_{L^s (\Omega)}^2,
\quad v, v+w \in K_0,
\end{equation*}
we verify that \cref{Ass:smooth_nearly,Ass:uniform} hold with $p = s$ and $q = 2$.
\cref{Ass:triangle} can be verified by a similar argument as \cref{Ex:triangle_sLap}.
Moreover, \cref{Ass:kernel} is satisfied since $\operatorname{span} \{ 1 \} \subset V_0$.
Therefore, by \cref{Thm:nearly,Thm:nearly_sharp}, we conclude that the convergence rate of the two-level additive Schwarz method for solving~\eqref{perturbed_FEM} achieves an $\epsilon$-independent estimate.

\section{Concluding remarks}
\label{Sec:Conclusion}
In this paper, we presented a convergence analysis of subspace correction methods for semicoercive and nearly semicoercive convex optimization problems, generalizing the theory of singular~\cite{LWXZ:2008,WLXZ:2008} and nearly singular~\cite{LWXZ:2007,WZ:2014} linear problems.
The central message is that the elegant theoretical results developed for linear problems can be directly extended to convex optimization problems.
Given the wide applicability of the linear theory to various PDEs~\cite{FMW:2019,LWC:2009,XY:2015}, we anticipate that the convex theory introduced in this paper will similarly have broad applications to nonlinear PDEs.

We conclude this paper by discussing potential directions for future work related to the abstract theory. 
While this paper focused on smooth convex optimization problems, an important next step is to extend the framework to constrained or nonsmooth convex optimization problems~\cite{BK:2012,BTW:2003,Carstensen:1997}.
Such extensions are particularly important due to the prevalence of these problems in scientific and engineering applications; see, for example,~\cite{BF:2024,LP:2020,TWZZ:2023}.
Although the convergence theory of subspace correction methods for nonsmooth but coercive problems has been developed in~\cite{Park:2020}, extending this theory to the semicoercive setting presents challenges.
In particular, it was shown in~\cite{CTWY:2015,LP:2019} that domain decomposition methods for dual total variation minimization, a semicoercive convex problem with pointwise constraints, achieve only sublinear convergence rates, despite the fact that the associated energy functional satisfies \cref{Ass:smooth,Ass:sharp} with $p = q = 2$.

On the other hand, a recent work~\cite{Park:2024a} demonstrated that domain decomposition methods for certain semilinear elliptic problems achieve convergence rates that are independent of the nonlinearity of the problems.
We expect that there may be a connection between the result in~\cite{Park:2024a} and the nearly semicoercive theory presented in this paper, though further investigation is needed.

\appendix
\section{Inexact local problems}
\label{App:Local}
Subspace correction methods often incorporate inexact local problems.
Namely, each local energy functional $F_j$, $j \in [N]$, in \cref{Alg:PSC} may not be defined exactly as in~\eqref{local_exact}, but rather as an approximation.
From this perspective, the convergence analyses of subspace correction methods for convex optimization presented in~\cite{Park:2020,Park:2022a,PX:2024} were conducted allowing inexact local problems.
In this appendix, we demonstrate how the convergence analysis in this paper can be extended to accommodate inexact local problems.

\subsection{Semicoercive problems}
We first consider subspace correction methods for semicoercive problems discussed in \cref{Sec:Semicoercive}.
That is, in the model problem~\eqref{model}, we assume that $F$ is semicoercive with respect to a seminorm $| \cdot |$ in the sense of \cref{Prop:semicoercive}, and denote $\mathcal{N} = \ker F = \ker | \cdot |$.
We assume that each local energy functional $F_j$, $j \in [N],$ is not necessarily given by the exact one~\eqref{local_exact}, but rather by any functional satisfying the smoothness condition stated in \cref{Ass:smooth}, where $d_j$ is still defined as in~\eqref{d_j}.
Additionally, we require the further assumptions on $F_j$ summarized in \cref{Ass:local}~(cf.~\cite[Assumption~1]{PX:2024}).

\begin{assumption}[local problems]
\label{Ass:local}
For any $j \in [N]$ and $v \in V$, the local energy functional $F_j (\cdot ; v) \colon V_j \rightarrow \mathbb{R}$ satisfies the following:
\begin{enumerate}[(a)]
\item (convexity) The functional $F_j (\cdot; v) \colon V_j \rightarrow \mathbb{R}$ is G\^{a}teaux differentiable, semicoercive, and convex.
\item (consistency) We have
\begin{equation*}
F_j (0;v) = F(v),
\end{equation*}
and
\begin{equation*}
\langle F_j' (0;v), w_j \rangle = \langle F'(v), w_j \rangle
\quad \forall w_j \in V_j.
\end{equation*}
\item (stability) For some $\omega \in (0, 1] \cup (1, \rho)$, we have
\begin{equation}
\label{omega}
d_F (w_j; v) \leq \omega d_j (w_j ; v)
\quad \forall w_j \in V_j,
\end{equation}
where the constant $\rho$ is defined as
\begin{equation}
\label{rho}
    \rho = \min_{j \in [N]} \inf_{d_j (w_j; v) \neq 0} \frac{\langle d_j' (w_j; v), w_j \rangle}{d_j (w_j; v)}.
\end{equation}
\end{enumerate}
\end{assumption}

If the local energy functional $F_j$ is given by~\eqref{local_exact}, then it clearly satisfies \cref{Ass:local}.
One can verify without difficulty that \cref{Ass:local}(a, b) implies that the constant $\rho$ defined in~\eqref{rho} satisfies $\rho \geq 1$.
For completeness, we provide an example below, as also given in~\cite[Example~2]{PX:2024}.

\begin{example}
\label{Ex:local}
Suppose that the local energy functional $F_j$ is given by
\begin{equation*}
F_j (w_j; v) = F(v) + \langle F'(v), w_j \rangle + \frac{M}{s} \| w_j \|^s,
\quad v \in V, \text{ } w_j \in V_j,
\end{equation*}
for some $s > 1$ and $M > 0$.
It is clear that \cref{Ass:local}(a, b) holds.
Moreover, for any $v \in V$ and $w_j \in V_j \setminus \{0\}$, we have~(see~\cite{XR:1991})
\begin{equation*}
\frac{\langle d_j' (w_j; v), w_j \rangle}{d_j (w_j ; v)} = s.
\end{equation*}
This implies $\rho = s$.
\end{example}

The additional assumptions for local problems presented in \cref{Ass:local} are motivated by their role in ensuring a sufficient decrease property for the local problems.
More precisely, these assumptions guarantee that solving a local problem satisfying \cref{Ass:local} leads to a reduction in the energy $F$; see \cref{Lem:local_descent}.

\begin{lemma}
\label{Lem:local_descent}
Let $V$ be a reflexive Banach space, and let $F \colon V \rightarrow \mathbb{R}$ be a G\^{a}teaux differentiable, convex, and semicoercive convex functional with the kernel $\mathcal{N}$.
For $j \in [N]$ and $v \in V$, let
\begin{equation*}
\hat{w}_j \in \operatornamewithlimits{\arg\min}_{w_j \in V_j} F_j (w_j; v).
\end{equation*}
Under \cref{Ass:local}, we have
\begin{equation*}
F(v) - F(v + \hat{w}_j) \geq \left( 1 - \frac{\omega}{\rho} \right) \langle d_j' (\hat{w}_j; v), \hat{w}_j \rangle \geq 0.
\end{equation*}
\end{lemma}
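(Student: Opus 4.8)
The plan is to express the energy decrease $F(v)-F(v+\hat w_j)$ purely in terms of the Bregman-type quantities $d_F(\hat w_j;v)$ and $\langle d_j'(\hat w_j;v),\hat w_j\rangle$, and then to chain together the stability bound~\eqref{omega} and the definition~\eqref{rho} of $\rho$. The guiding idea is the familiar one from the analysis of inexact subspace correction: an (inexact) local solve must reduce the global energy by a fixed fraction of the ``local gain'' $\langle d_j'(\hat w_j;v),\hat w_j\rangle$.

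First I would extract the structural consequences of \cref{Ass:local}. The consistency part gives $F_j(0;v)=F(v)$ and $F_j'(0;v)=F'(v)|_{V_j}$, so, comparing with~\eqref{d_j}, the functional $d_j(\cdot;v)$ is exactly the Bregman distance of the convex functional $F_j(\cdot;v)$ based at $0\in V_j$; in particular $d_j(0;v)=0$, $d_j'(0;v)=0$, and $d_j(\cdot;v)\ge 0$ on $V_j$. The subgradient inequality for $d_j(\cdot;v)$ between $w_j$ and the base point then yields $\langle d_j'(w_j;v),w_j\rangle\ge d_j(w_j;v)\ge 0$ for all $w_j\in V_j$, which shows $\rho\ge 1$ and, via~\eqref{rho} (the degenerate case $d_j(w_j;v)=0$ being covered directly by the previous inequality), that $d_j(w_j;v)\le\rho^{-1}\langle d_j'(w_j;v),w_j\rangle$ for all $w_j\in V_j$. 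Next I would use that $F_j(\cdot;v)$ is G\^{a}teaux differentiable and convex on the subspace $V_j$: a minimizer $\hat w_j$ satisfies $\langle F_j'(\hat w_j;v),z_j\rangle=0$ for every $z_j\in V_j$, and combining this with consistency and~\eqref{d_j} gives $\langle d_j'(\hat w_j;v),\hat w_j\rangle=-\langle F'(v),\hat w_j\rangle$. Using the definition~\eqref{d_F} of $d_F$ I then obtain the identity
\[
F(v)-F(v+\hat w_j)=-\langle F'(v),\hat w_j\rangle-d_F(\hat w_j;v)=\langle d_j'(\hat w_j;v),\hat w_j\rangle-d_F(\hat w_j;v).
\]
To close the argument, I would bound $d_F(\hat w_j;v)\le\omega\,d_j(\hat w_j;v)\le(\omega/\rho)\langle d_j'(\hat w_j;v),\hat w_j\rangle$ using~\eqref{omega} and the inequality just established, substitute into the identity, and note that $\omega\in(0,1]\cup(1,\rho)$ together with $\rho\ge 1$ forces $\omega\le\rho$, hence $1-\omega/\rho\ge 0$; nonnegativity of the whole expression then follows from $\langle d_j'(\hat w_j;v),\hat w_j\rangle\ge 0$.

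I do not expect a real obstacle: once the quantities are set up correctly the proof is just bookkeeping with Bregman distances. The two places that call for a little care are (i) the verification that $d_j(\cdot;v)$ is a Bregman distance based at $0$ — this is what makes $d_j\ge 0$, $d_j'(0;v)=0$, and $\langle d_j'(w_j;v),w_j\rangle\ge d_j(w_j;v)$ available, and these in turn give $\rho\ge 1$ and handle the degenerate case $d_j(\hat w_j;v)=0$ in~\eqref{rho}; and (ii) keeping straight that $F_j'(\cdot;v)$ and $d_j'(\cdot;v)$ are functionals on $V_j$ while $F'(v)$ enters only through its restriction to $V_j$, so that the optimality condition for $\hat w_j$ and the chain rule for $d_j$ line up consistently.
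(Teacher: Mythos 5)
Your proof is correct and follows essentially the same route as the paper: use consistency to write the optimality condition $\langle F'(v),w_j\rangle+\langle d_j'(\hat w_j;v),w_j\rangle=0$, then chain $d_F(\hat w_j;v)\le\omega\,d_j(\hat w_j;v)\le(\omega/\rho)\langle d_j'(\hat w_j;v),\hat w_j\rangle$ via~\eqref{omega} and~\eqref{rho}. You add some extra bookkeeping (showing $d_j(\cdot;v)$ is a Bregman distance based at $0$, deriving $\rho\ge 1$, and treating the degenerate case $d_j(\hat w_j;v)=0$ separately), but these are details the paper handles informally; the substance of the argument is identical.
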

\begin{proof}
By \cref{Ass:local}(b), we have
\begin{equation*}
\hat{w}_j \in \operatornamewithlimits{\arg\min}_{w_j \in V_j} \left\{ \langle F'(v), w_j \rangle + d_j (w_j; v) \right\},
\end{equation*}
in which the optimality condition reads as
\begin{equation}
\label{hat_w_j_optimality}
\langle F'(v), w_j \rangle + \langle d_j' (\hat{w}_j; v), w_j \rangle = 0
\quad \forall w_j \in V_j.
\end{equation}
It follows that
\begin{equation*}
\begin{split}
F(v + \hat{w}_j)
&\stackrel{\eqref{omega}}{\leq} F(v) + \langle F'(v), \hat{w}_j \rangle + \omega d_j (\hat{w}_j; v) \\
&\stackrel{\eqref{rho}}{\leq} F(v) + \langle F'(v), \hat{w}_j \rangle + \frac{\omega}{\rho} \langle d_j' (\hat{w}_j; v), \hat{w}_j \rangle \\
&\stackrel{\eqref{hat_w_j_optimality}}{=} F(v) - \left(1 - \frac{\omega}{\rho} \right) \langle d_j' (\hat{w}_j; v), \hat{w}_j \rangle .
\end{split}
\end{equation*}
Finally, since $\rho \geq 1$ and $\omega \leq \rho$, we have
\begin{equation*}
\left( 1 - \frac{\omega}{\rho} \right) \langle d_j' (\hat{w}_j; v), \hat{w}_j \rangle \geq 0,
\end{equation*}
which completes the proof.
\end{proof}

Using \cref{Lem:local_descent}, we are able to derive a descent property of the parallel subspace correction method with inexact local problems, as summarized in \cref{Lem:descent_inexact}.
This result generalizes \cref{Lem:descent}.

\begin{lemma}
\label{Lem:descent_inexact}
Let $V$ be a reflexive Banach space, and let $F \colon V \rightarrow \mathbb{R}$ be a G\^{a}teaux differentiable, convex, and semicoercive convex functional with the kernel $\mathcal{N}$.
In \cref{Alg:PSC}, suppose that \cref{Ass:local} holds.
Then we have
\begin{equation*}
\resizebox{\hsize}{!}{%
$\displaystyle
F(u^{(n+1)}) \leq F(u^{(n)}) + \tau \theta \min_{w \in V} \left\{ \langle F'(u^{(n)}), w \rangle + \inf_{\phi \in \mathcal{N}} \inf_{\sum_{j=1}^N w_j = w + \phi} \sum_{j=1}^N d_j (w_j; u^{(n)}) \right\},
\text{ } n \geq 0,
$
}
\end{equation*}
where the constant $\theta$ is given by
\begin{equation*}
\theta = 
\begin{cases}
1, \quad &\text{ if } \omega \in [0, 1], \\
\frac{\rho - \omega}{\rho - 1}, \quad &\text{ if } \omega \in (1, \rho).
\end{cases}
\end{equation*}
\end{lemma}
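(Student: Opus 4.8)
The plan is to mimic the proof of \cref{Lem:descent}, but replace the use of the exact local optimality with the sufficient-decrease estimate from \cref{Lem:local_descent}. The starting point is the strengthened convexity condition~\eqref{convexity}, which as in~\eqref{Lem1:descent} gives
\begin{equation*}
F(u^{(n+1)}) \leq (1 - \tau N) F(u^{(n)}) + \tau \sum_{j=1}^N F(u^{(n)} + w_j^{(n+1)}),
\end{equation*}
so again everything reduces to controlling $\sum_{j=1}^N F(u^{(n)} + w_j^{(n+1)})$, where $w_j^{(n+1)} = \hat{w}_j$ is a minimizer of $F_j(\cdot; u^{(n)})$ over $V_j$. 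Here, however, $\hat{w}_j$ minimizes the \emph{inexact} local functional, so it is not a minimizer of $\langle F'(u^{(n)}), \cdot\rangle + d_j(\cdot; u^{(n)})$ and \cref{Lem:ASM} cannot be applied directly.

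The key step is to interpolate between the two regimes. I would write, for $\lambda \in [0,1]$ to be chosen, using convexity of $F$ along the segment from $u^{(n)}$ to $u^{(n)} + \hat{w}_j$ together with \cref{Lem:local_descent}:
\begin{equation*}
F(u^{(n)} + \hat{w}_j) \leq (1-\lambda) F(u^{(n)}) + \lambda F\!\left(u^{(n)} + \tfrac{1}{\lambda}\hat{w}_j\right) - (1-\lambda)\left(1 - \tfrac{\omega}{\rho}\right)\langle d_j'(\hat{w}_j; u^{(n)}), \hat{w}_j\rangle,
\end{equation*}
or more efficiently, combine the exact-case identity (the analogue of~\eqref{Lem2:descent}, which expresses $F(u^{(n)} + \hat{w}_j)$ via $\langle F'(u^{(n)}), \hat{w}_j\rangle + d_j(\hat{w}_j; u^{(n)})$ and then bounds the latter below by the $\min_{w\in V}$ expression using \cref{Lem:ASM}) with the inexact sufficient-decrease bound of \cref{Lem:local_descent}. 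Concretely: from \cref{Lem:local_descent} we have $F(u^{(n)}) - F(u^{(n)} + \hat{w}_j) \geq (1 - \omega/\rho)\langle d_j'(\hat{w}_j; u^{(n)}), \hat{w}_j\rangle$, while the optimality condition~\eqref{hat_w_j_optimality} and the definition~\eqref{rho} of $\rho$ give $\langle F'(u^{(n)}), \hat{w}_j\rangle + d_j(\hat{w}_j; u^{(n)}) \geq \langle F'(u^{(n)}), \hat{w}_j\rangle + \frac{1}{\rho}\langle d_j'(\hat{w}_j; u^{(n)}), \hat{w}_j\rangle = (1 - \frac{1}{\rho})\langle d_j'(\hat{w}_j; u^{(n)}), \hat{w}_j\rangle$ when $\omega > 1$ (and the exact case $\omega \leq 1$ is handled as in \cref{Lem:descent}). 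Writing $t_j := \langle d_j'(\hat{w}_j; u^{(n)}), \hat{w}_j\rangle \geq 0$, these two inequalities say $F(u^{(n)}) - F(u^{(n)}+\hat{w}_j) \geq (1 - \omega/\rho)\, t_j$ and that $\min_{w\in V}\{\cdots\} \leq \sum_j\big(\langle F'(u^{(n)}), \hat{w}_j\rangle + d_j(\hat{w}_j; u^{(n)})\big)$, with each summand $\geq (1 - 1/\rho) t_j$ (for $\omega > 1$; when $\rho=1$ the factor is zero and one falls back to the exact argument). Dividing, $F(u^{(n)}) - F(u^{(n)}+\hat{w}_j) \geq \frac{\rho - \omega}{\rho - 1}\big(\langle F'(u^{(n)}), \hat{w}_j\rangle + d_j(\hat{w}_j; u^{(n)})\big)$, i.e.\ $F(u^{(n)}+\hat{w}_j) - F(u^{(n)}) \leq \theta\big(\langle F'(u^{(n)}), \hat{w}_j\rangle + d_j(\hat{w}_j; u^{(n)})\big)$ with $\theta$ as stated — but one must be careful, since the right-hand side may be negative and $\theta \leq 1$, so this inequality needs the sign of $\langle F'(u^{(n)}), \hat{w}_j\rangle + d_j(\hat{w}_j; u^{(n)})$ tracked, which is nonpositive by \cref{Lem:local_descent} combined with~\eqref{hat_w_j_optimality} (it equals $-(1-1/\rho)t_j \leq 0$). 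Summing over $j$ and invoking \cref{Lem:ASM} (via the identities~\eqref{Lem2:ASM} and the minimization in~\eqref{Lem1:ASM}) to replace $\sum_j\big(\langle F'(u^{(n)}), \hat{w}_j\rangle + d_j(\hat{w}_j; u^{(n)})\big)$ by $\min_{w\in V}\{\langle F'(u^{(n)}), w\rangle + \inf_{\phi\in\mathcal{N}}\inf_{\sum w_j = w+\phi}\sum_j d_j(w_j; u^{(n)})\}$ — noting this min is also nonpositive, being $\leq 0$ at $w = 0$ — and substituting back into~\eqref{Lem1:descent} yields the claim.

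The main obstacle, as flagged above, is the bookkeeping of signs: because $\theta \leq 1$ and the quantity being multiplied by $\tau\theta$ is nonpositive, one must make sure the inequality goes in the right direction at every step, and in particular verify that the per-subspace bound $F(u^{(n)}+\hat w_j) - F(u^{(n)}) \leq \theta\,(\langle F'(u^{(n)}), \hat w_j\rangle + d_j(\hat w_j; u^{(n)}))$ genuinely follows from the two one-sided estimates (this uses $\langle F'(u^{(n)}), \hat w_j\rangle + d_j(\hat w_j; u^{(n)}) \leq 0$, which itself needs \cref{Ass:local}(c) through $\rho \geq 1$). The degenerate case $\rho = 1$ (forcing $\omega \leq 1$, hence $\theta = 1$) and the exact case should be pointed out as reducing to \cref{Lem:descent}. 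Everything else is a routine repetition of the computations in \cref{Lem:descent}.
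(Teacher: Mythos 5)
Your strategy --- establish a per-subspace bound $F(u^{(n)}+\hat w_j) - F(u^{(n)}) \leq \theta\bigl(\langle F'(u^{(n)}), \hat w_j\rangle + d_j(\hat w_j; u^{(n)})\bigr)$, sum over $j$, invoke \cref{Lem:ASM}, and plug into \eqref{Lem1:descent} --- genuinely differs from the paper's. The paper instead applies \eqref{omega} termwise, splits $\sum_j\bigl(\langle F'(u^{(n)}), w_j^{(n+1)}\rangle + \omega d_j\bigr) = \omega\sum_j\bigl(\langle F'(u^{(n)}), w_j^{(n+1)}\rangle + d_j\bigr) - (\omega-1)\sum_j\langle F'(u^{(n)}), w_j^{(n+1)}\rangle$, identifies the first sum with the $\min$ via \cref{Lem:ASM}, bounds the second via \cref{Lem:local_descent} by a multiple of $NF(u^{(n)}) - \sum_j F(u^{(n)}+w_j^{(n+1)})$, and solves the resulting self-referential inequality for $\sum_j F(u^{(n)}+w_j^{(n+1)})$. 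Both routes land on the same $\theta$, and your per-subspace claim is in fact true, so the strategy would work if established correctly. (As a minor aside, your opening concern that \cref{Lem:ASM} cannot be applied is unfounded: by \cref{Ass:local}(b), $\hat w_j$ does minimize $\langle F'(u^{(n)}),\cdot\rangle + d_j(\cdot;u^{(n)})$ with $d_j$ the inexact Bregman distance; what changes in the inexact case is only that $d_F\neq d_j$ on $V_j$, not the applicability of \cref{Lem:ASM}.)

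However, your derivation of the per-subspace bound has a genuine flaw, and it is exactly the step you flagged. Let $t_j := \langle d_j'(\hat w_j; u^{(n)}), \hat w_j\rangle \geq 0$ and $A_j := \langle F'(u^{(n)}), \hat{w}_j\rangle + d_j(\hat{w}_j; u^{(n)})$. Your estimate for $A_j$ is wrong on two counts: \eqref{rho} gives $d_j(\hat w_j; u^{(n)}) \leq t_j/\rho$, not the reverse inequality you wrote, and by \eqref{hat_w_j_optimality} the quantity $\langle F'(u^{(n)}),\hat w_j\rangle + t_j/\rho$ equals $-(1-1/\rho)\,t_j$, not $+(1-1/\rho)\,t_j$. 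The correct statement is the one-sided inequality $A_j \leq -(1-1/\rho)\,t_j$, not the equality you later assert. With the signs corrected, the ``dividing'' step collapses: from the two lower bounds $F(u^{(n)}) - F(u^{(n)}+\hat w_j) \geq (1-\omega/\rho)\,t_j$ and $-A_j \geq (1-1/\rho)\,t_j$ in the same nonnegative $t_j$, no comparison of the left-hand sides follows (two lower bounds do not yield a ratio bound). The bound you need is obtained by a direct computation instead: by \eqref{omega} and \eqref{hat_w_j_optimality}, $F(u^{(n)}+\hat w_j) - F(u^{(n)}) \leq -t_j + \omega\, d_j(\hat w_j; u^{(n)})$, and writing $d := d_j(\hat w_j; u^{(n)})$ one checks $(-t_j + \omega d) - \theta(-t_j + d) = \frac{\omega-1}{\rho-1}(\rho d - t_j) \leq 0$ by \eqref{rho}. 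This delivers the per-subspace bound using only \eqref{omega}, \eqref{rho}, and \eqref{hat_w_j_optimality} (\cref{Lem:local_descent} itself is not needed on this route), after which the rest of your argument goes through.
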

\begin{proof}
Take any $n \geq 0$.
By~\eqref{Lem1:descent}, it suffices to estimate $\sum_{j=1}^N F(u^{(n)} + w_j^{(n+1)})$.
As the case $\omega \in (0, 1]$ can be proven with the same argument as in \cref{Lem:descent}, we focus only on the case $\omega \in (1, \rho)$.
It follows that
\begin{equation}
\label{Lem1:descent_inexact}
\begin{split}
&\sum_{j=1}^N F(u^{(n)} + w_j^{(n+1)})
\stackrel{\eqref{omega}}{\leq} N F(u^{(n)}) + \sum_{j=1}^{N} \left( \langle F'(u^{(n)}), w_j^{(n+1)} \rangle + \omega d_j (w_j^{(n+1)}; u^{(n)}) \right) \\
&= N F(u^{(n)}) + \omega \min_{w \in V} \left\{ \langle F'(u^{(n)}), w \rangle + \inf_{\phi \in \mathcal{N}} \inf_{\sum_{j=1}^N w_j = w + \phi} \sum_{j=1}^N d_j (w_j; u^{(n)}) \right\} \\
&\quad - (\omega - 1) \sum_{j=1}^N \langle F'(u^{(n)}), w_j^{(n+1)} \rangle.
\end{split}
\end{equation}
Note that \cref{Lem:local_descent} implies
\begin{equation}
\label{Lem2:descent_inexact}
\begin{split}
- \langle F'(u^{(n)}), w_j^{(n+1)} \rangle
&\stackrel{\eqref{hat_w_j_optimality}}{=} \langle d_j' (w_j^{(n+1)}; u^{(n)}), w_j^{(n+1)} \rangle \\
&\leq \frac{\rho}{\rho - \omega} \left( F(u^{(n)}) - F(u^{(n)} + w_j^{(n+1)}) \right).
\end{split}
\end{equation}
By~\eqref{Lem1:descent_inexact} and~\eqref{Lem2:descent_inexact}, we get
\begin{multline}
\label{Lem3:descent_inexact}
\sum_{j=1}^N F(u^{(n)} + w_j^{(n+1)}) \\
\leq N F(u^{(n)}) + \frac{\rho - \omega}{\rho - 1} \min_{w \in V} \left\{ \langle F'(u^{(n)}, w \rangle + \inf_{\phi \in \mathcal{N}} \inf_{\sum_{j=1}^N w_j = w + \phi} \sum_{j=1}^N d_j (w_j; u^{(n)}) \right\}.
\end{multline}
Finally, combining~\eqref{Lem1:descent} and~\eqref{Lem3:descent_inexact} completes the proof.
\end{proof}

The only distinction in the descent property for the case of inexact local problems, as described in \cref{Lem:descent_inexact}, compared to the case of exact local problems in \cref{Lem:descent}, is that $\tau$ in \cref{Lem:descent} is replaced with $\tau \theta$ in \cref{Lem:descent_inexact}.
Consequently, by following the same arguments as those in \cref{Sec:Semicoercive}, we can establish convergence theorems analogous to \cref{Thm:conv,Thm:conv_sharp} for inexact local problems.
As the statements of these results are identical to those of \cref{Thm:conv,Thm:conv_sharp}, except for replacing $\tau$ with $\tau \theta$, we omit them here for brevity.

\subsection{Nearly semicoercive problems}
Next, we consider subspace correction methods for the nearly semicoercive problems discussed in \cref{Sec:Nearly}.
As outlined in \cref{Sec:Nearly}, the analysis for nearly semicoercive problems relies on the assumptions stated in \cref{Ass:smooth_nearly,Ass:kernel,Ass:triangle,Ass:uniform}.
Among these, the assumptions specifically related to the local problems are \cref{Ass:smooth_nearly,Ass:triangle}.
Since the analysis of nearly semicoercive problems builds upon the coercive theory (using the particular norm $\| \cdot \|_{\epsilon, q}$), we can derive analogous results to \cref{Thm:nearly,Thm:nearly_sharp}, provided that additional assumptions on the local problems---such as convexity, consistency, and stability, as described in \cref{Ass:local}---are satisfied.
For the sake of brevity, we omit the detailed derivations.

\section{Proofs of the convergence theorems}
\label{App:Proofs}
In this appendix, we provide proofs of the convergence theorems of the parallel subspace correction method for semicoercive problems discussed in this paper, namely, \cref{Thm:conv,Thm:conv_sharp}.
The proofs presented in this section use similar arguments as in~\cite{Park:2020,Park:2022a}.

We begin by presenting several elementary lemmas.
We note that \cref{Lem:minimum} also appeared in~\cite[Lemma~3.8]{Park:2022a}.

\begin{lemma}
\label{Lem:minimum}
Let $a, b > 0$, $q  > 1$, and $T > 0$.
The minimum of the function $g(t) = \frac{a}{q}t^q - bt$, $t \in [0, T]$, is given as follows:
\begin{equation*}
\min_{t \in [0, T]} g(t) =
\begin{cases}
    \frac{a}{q} T^q - b T < -b T \left( 1- \frac{1}{q} \right) & \text{ if } a T^{q-1} - b < 0, \\
    - b \left(1 - \frac{1}{q} \right) \left( \frac{b}{a} \right)^{\frac{1}{q-1}} & \text{ if } a T^{q-1} - b \geq 0.
\end{cases}
\end{equation*}
\end{lemma}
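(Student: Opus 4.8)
The plan is to treat this as a standard one-variable calculus optimization over a closed interval $[0,T]$, splitting into cases according to whether the unconstrained critical point lies inside or outside the interval. First I would compute $g'(t) = a t^{q-1} - b$, which is strictly increasing on $(0,\infty)$ since $a > 0$ and $q > 1$; hence $g$ is convex on $[0,T]$ and its minimum is attained either at the unique stationary point $t^* = (b/a)^{1/(q-1)}$, if $t^* \le T$, or at the endpoint $t = T$, if $t^* > T$. The condition $t^* \le T$ is equivalent to $b \le a T^{q-1}$, i.e., $aT^{q-1} - b \ge 0$, which matches the case split in the statement.

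In the case $aT^{q-1} - b \ge 0$, I would substitute $t = t^* = (b/a)^{1/(q-1)}$ into $g$. A short computation gives $g(t^*) = \frac{a}{q}(b/a)^{q/(q-1)} - b(b/a)^{1/(q-1)} = (b/a)^{1/(q-1)}\bigl(\frac{b}{q} - b\bigr) = -b\bigl(1 - \frac1q\bigr)(b/a)^{1/(q-1)}$, which is exactly the claimed value. In the case $aT^{q-1} - b < 0$, the minimum is $g(T) = \frac{a}{q}T^q - bT$, so it remains only to verify the strict inequality $\frac{a}{q}T^q - bT < -bT(1 - \frac1q)$. Rearranging, this is equivalent to $\frac{a}{q}T^q < bT - bT + \frac{bT}{q} = \frac{bT}{q}$, i.e., $aT^{q-1} < b$, which is precisely the hypothesis of this case (using $T > 0$ to divide). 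Thus the inequality follows directly.

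I do not expect any genuine obstacle here; the lemma is elementary and the only mild care needed is in the algebraic simplification of $g(t^*)$ (handling the exponents $q/(q-1)$ and $1/(q-1)$ correctly) and in noting that the endpoint $t = 0$ never gives the minimum since $g(0) = 0$ while $g$ takes negative values near $0$ (as $g'(0^+) = -b < 0$). I would state the convexity/monotonicity of $g'$ explicitly to justify that no other local minima exist on $[0,T]$, and then present the two cases as above. The whole argument is a few lines.
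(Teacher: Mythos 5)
Your proof is correct and complete: the case split on $aT^{q-1} - b$ corresponds exactly to whether the unique stationary point $t^* = (b/a)^{1/(q-1)}$ of the convex function $g$ lies in $[0,T]$, the evaluation of $g(t^*)$ is algebraically right, and the strict inequality in the first case reduces, after dividing by $T>0$, precisely to $aT^{q-1} < b$. The paper does not supply its own proof of this lemma (it simply cites Lemma~3.8 of~\cite{Park:2022a}), so there is nothing to compare against; the elementary calculus argument you give is the natural one and is what that reference uses as well.
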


The following lemma, also introduced in~\cite[Lemma~1.1]{GN:2017}, can be proven easily by invoking~\cite[Lemma~3.7]{Park:2022a}.

\begin{lemma}
\label{Lem:recurrence}
Let $\{ a_n \}$ be a sequence of positive real numbers that satisfies
\begin{equation*}
    a_n - a_{n+1} \geq C a_n^{\gamma}, \quad n \geq 0,
\end{equation*}
for some $C > 0$ and $\gamma > 1$.
Then with $\beta = \frac{1}{\gamma - 1}$, we have
\begin{equation*}
    a_n \leq \left( \frac{\beta}{C n + \beta a_0^{-1/\beta}} \right)^{\beta},
    \quad n \geq 0.
\end{equation*}
\end{lemma}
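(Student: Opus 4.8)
The plan is to reduce the given recurrence to the setting of \cite[Lemma~3.7]{Park:2022a} and then invoke that result directly. The recurrence $a_n - a_{n+1} \geq C a_n^{\gamma}$ with $\gamma > 1$ is the discrete analogue of the differential inequality $-a'(t) \geq C a(t)^{\gamma}$, whose solution decays like $t^{-1/(\gamma-1)}$; the claimed bound $a_n \leq (\beta/(Cn + \beta a_0^{-1/\beta}))^{\beta}$ with $\beta = 1/(\gamma-1)$ is exactly the discrete counterpart, tuned so that equality holds at $n=0$. So the skeleton of the argument is: first observe that $\{a_n\}$ is decreasing (immediate from $a_n - a_{n+1} \geq C a_n^{\gamma} > 0$, since the $a_n$ are positive), then set up an induction on $n$.

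First I would handle the base case $n = 0$: the right-hand side equals $(\beta / (\beta a_0^{-1/\beta}))^{\beta} = (a_0^{1/\beta})^{\beta} = a_0$, so the inequality holds with equality. For the inductive step, assuming $a_n \leq (\beta/(Cn + \beta a_0^{-1/\beta}))^{\beta}$, write $s_n = Cn + \beta a_0^{-1/\beta}$ so the hypothesis reads $a_n \leq (\beta/s_n)^{\beta}$ and the target reads $a_{n+1} \leq (\beta/(s_n + C))^{\beta}$. From the recurrence, $a_{n+1} \leq a_n - C a_n^{\gamma} = a_n(1 - C a_n^{\gamma - 1}) = a_n(1 - C a_n^{1/\beta})$. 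The function $t \mapsto t(1 - C t^{1/\beta})$ is increasing on the relevant range of small $t$, so one may substitute the upper bound $a_n \leq (\beta/s_n)^{\beta}$ to get $a_{n+1} \leq (\beta/s_n)^{\beta}(1 - C\beta/s_n)$; it then remains to check the elementary scalar inequality $(\beta/s_n)^{\beta}(1 - C\beta/s_n) \leq (\beta/(s_n+C))^{\beta}$, equivalently $(1 - C\beta/s_n) \leq (s_n/(s_n+C))^{\beta}$, which after writing $x = C/s_n$ becomes $1 - \beta x \leq (1+x)^{-\beta}$ for $x \geq 0$ — a standard convexity/Bernoulli-type estimate. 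This is precisely the computation packaged in \cite[Lemma~3.7]{Park:2022a}, so rather than reproving it I would simply cite that lemma after casting the recurrence in the form it requires.

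The main obstacle is the monotonicity issue in the inductive step: substituting the inductive upper bound for $a_n$ into the expression $a_n(1 - C a_n^{1/\beta})$ is only legitimate on the interval where $t \mapsto t(1 - Ct^{1/\beta})$ is nondecreasing, i.e.\ for $t$ up to roughly $(\beta/((\beta+1)C))^{\beta}$. One must therefore verify that all the iterates $a_n$ (and the comparison values $(\beta/s_n)^{\beta}$) stay within this range. This follows because $a_{n+1} \geq 0$ forces $C a_n^{1/\beta} \leq 1$, hence $a_n \leq C^{-\beta}$, but a slightly more careful bookkeeping — or, more cleanly, a direct appeal to \cite[Lemma~3.7]{Park:2022a}, where this monotonicity is already accounted for — is the cleanest route. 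Everything else is routine algebra, so the proof reduces to the single sentence: apply \cite[Lemma~3.7]{Park:2022a} with the stated choice of $\beta$.
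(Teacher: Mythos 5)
Your proposal is correct and matches the paper's own treatment, which is simply to cite \cite[Lemma~3.7]{Park:2022a}; the paper offers no further argument. Your expository scaffolding (induction, the scalar inequality $1 - \beta x \le (1+x)^{-\beta}$) is the right content, and your final reduction to the cited lemma is exactly what the paper does.

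One remark on the obstacle you flag. The monotonicity concern disappears entirely if you substitute $b_n = a_n^{-1/\beta}$ rather than carrying the upper bound through $t \mapsto t(1 - Ct^{1/\beta})$. From $0 < a_{n+1} \le a_n(1 - C a_n^{1/\beta})$ you get, since $t \mapsto t^{-1/\beta}$ is decreasing on $(0,\infty)$,
\begin{equation*}
b_{n+1} = a_{n+1}^{-1/\beta} \ge a_n^{-1/\beta}\bigl(1 - C a_n^{1/\beta}\bigr)^{-1/\beta} \ge a_n^{-1/\beta}\Bigl(1 + \frac{C}{\beta} a_n^{1/\beta}\Bigr) = b_n + \frac{C}{\beta},
\end{equation*}
using $(1-x)^{-1/\beta} \ge 1 + x/\beta$ for $x \in [0,1)$ (the positivity $1 - C a_n^{1/\beta} > 0$ is guaranteed because $a_{n+1}>0$). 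Telescoping gives $b_n \ge a_0^{-1/\beta} + Cn/\beta$ and hence the claimed bound, with no need to confine the iterates to the region where $t(1 - Ct^{1/\beta})$ is increasing. This is precisely the computation that \cite[Lemma~3.7]{Park:2022a} encapsulates, so your closing sentence — just invoke that lemma — is both correct and the paper's own route.

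Also a small correction: your bound $a_n \le C^{-\beta}$ (forced by $a_{n+1} > 0$) would not by itself place $a_n$ in the interval where $t \mapsto t(1 - Ct^{1/\beta})$ is nondecreasing; that requires the stronger $a_n \le (C\gamma)^{-\beta}$. So the "slightly more careful bookkeeping" you allude to is genuinely necessary if you pursue the direct-substitution route, but, as above, the $b_n$ change of variables renders it moot.
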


Thanks to \cref{Lem:descent}~(see \cref{Lem:descent_inexact} for the case of inexact local problems), for any $n \geq 0$, it suffices to estimate
\begin{equation*}
\min_{w \in V} \left\{ \langle F'(u^{(n)}), w \rangle + \inf_{\phi \in \mathcal{N}} \inf_{\sum_{j=1}^N w_j = w + \phi} \sum_{j=1}^N d_j (w_j; u^{(n)}) \right\}.
\end{equation*}
It follows that
\begin{equation}
\label{proof_core}
\begin{split}
\min_{w \in V} & \left\{ \langle F'(u^{(n)}), w \rangle + \inf_{\phi \in \mathcal{N}} \inf_{\sum_{j=1}^N w_j = w + \phi} \sum_{j=1}^N d_j (w_j; u^{(n)}) \right\} \\
&\leq \min_{u^{(n)} + w \in K_0} \left\{ \langle F'(u^{(n)}), w \rangle + \inf_{\phi \in \mathcal{N}} \inf_{\sum_{j=1}^N w_j = w + \phi} \sum_{j=1}^N d_j (w_j; u^{(n)}) \right\} \\
&\stackrel{\text{(i)}}{\leq} \min_{u^{(n)} + w \in K_0} \left\{ \langle F'(u^{(n)}), w \rangle + \frac{C_{K_0}}{q} | w |^q \right\} \\
&\stackrel{\text{(ii)}}{\leq} \min_{\alpha \in [0,1]} \left\{ \alpha \langle F'(u^{(n)}), u - u^{(n)} \rangle + \frac{\alpha^q C_{K_0}}{q} | u - u^{(n)} |^q \right\} \\
&\stackrel{\text{(iii)}}{\leq} \min_{\alpha \in [0,1]} \left\{ - \alpha \zeta_n + \frac{\alpha^q C_{K_0}}{q} | u - u^{(n)} |^q \right\},
\end{split}
\end{equation}
where (i) follows from \cref{Lem:stable}, (ii) uses the substitution $w = \alpha (u - u^{(n)})$ for $\alpha \in [0, 1]$, (iii) is due to the convexity of $F$, and $\zeta_n = F(u^{(n)}) - F(u)$.
Both \cref{Thm:conv,Thm:conv_sharp} can be proven by using~\eqref{proof_core}, as presented in the remainder of this appendix.

\subsection{Proof of \cref{Thm:conv}}
We proceed to estimate the last line of~\eqref{proof_core} as follows:
\begin{equation}
\label{proof_conv}
\begin{split}
\min_{\alpha \in [0,1]} \left\{ - \alpha \zeta_n + \frac{\alpha^q C_{K_0}}{q} | u - u^{(n)} |^q \right\}
&\stackrel{\eqref{R_0}}{\leq} \min_{\alpha \in [0,1]} \left\{ - \alpha \zeta_n + \frac{\alpha^q C_{K_0} R_0^q}{q} \right\} \\
&\leq \begin{cases}
- \left(1 - \frac{1}{q} \right) \zeta_n & \text{ if } \zeta_n > C_{K_0} R_0^q, \\
- \left(1 - \frac{1}{q} \right) \frac{\zeta_n^{\frac{q}{q-1}}}{(C_{K_0} R_0^q)^{\frac{1}{q-1}}} & \text{ if } \zeta_n \leq C_{K_0} R_0^q,
\end{cases}
\end{split}
\end{equation}
where the last inequality is due to \cref{Lem:minimum}.
Combining \cref{Lem:descent},~\eqref{proof_core}, and~\eqref{proof_conv}, we obtain
\begin{equation*}
\zeta_{n+1}
\leq \begin{cases}
\left( 1 - \tau \left(1 - \frac{1}{q} \right) \right) \zeta_n & \text{ if } \zeta_n > C_{K_0} R_0^q, \\
\zeta_n - \tau \left(1 - \frac{1}{q} \right) \frac{\zeta_n^{\frac{q}{q-1}}}{(C_{K_0} R_0^q)^{\frac{1}{q-1}}} & \text{ if } \zeta_n \leq C_{K_0} R_0^q.
\end{cases}
\end{equation*}
Note that, by \cref{Cor:descent}, the condition $\zeta_0 \leq C_{K_0} R_0^q$ ensures $\zeta_n \leq C_{K_0} R_0^q$.
Finally, invoking \cref{Lem:recurrence} completes the proof of \cref{Thm:conv}.

\subsection{Proof of \cref{Thm:conv_sharp}}
In the case of \cref{Thm:conv_sharp}, an alternative upper bound for the last line of~\eqref{proof_core} can be derived by invoking \cref{Ass:sharp}:
\begin{equation}
\label{proof_conv_sharp}
\min_{\alpha \in [0,1]} \left\{ - \alpha \zeta_n + \frac{\alpha^q C_{K_0}}{q} | u - u^{(n)} |^q \right\}
\leq \min_{\alpha \in [0,1]} \left\{ - \alpha \zeta_n + \frac{\alpha^q p^{\frac{q}{p}} C_{K_0}}{q \mu_{K_0}^{\frac{q}{p}}} \zeta_n^{\frac{q}{p}} \right\}.
\end{equation}

We first consider the case $p = q$.
It follows by \cref{Lem:minimum} that
\begin{equation}
\label{proof_conv_sharp_p=q}
\begin{split}
\min_{\alpha \in [0,1]} \left\{ - \alpha \zeta_n + \frac{\alpha^q p^{\frac{q}{p}} C_{K_0}}{q \mu_{K_0}^{\frac{q}{p}}} \zeta_n^{\frac{q}{p}} \right\}
&= \min_{\alpha \in [0,1]} \left\{ - \alpha \zeta_n + \frac{\alpha^q q C_{K_0}}{q \mu_{K_0}} \zeta_n \right\} \\
&\leq \zeta_n \left(1 - \frac{1}{q} \right) \min \left\{ 1 , \frac{\mu_{K_0}}{q C_{K_0}} \right\}^{\frac{1}{q-1}}.
\end{split}
\end{equation}
By combining \cref{Lem:descent},~\eqref{proof_core},~\eqref{proof_conv_sharp}, and~\eqref{proof_conv_sharp_p=q}, we obtain the desired result.

Next, we consider the case $p > q$.
By \cref{Lem:minimum}, we have
\begin{multline}
\label{proof_conv_sharp_pnq}
\min_{\alpha \in [0,1]} \left\{ - \alpha \zeta_n + \frac{\alpha^q p^{\frac{q}{p}} C_{K_0}}{q \mu_{K_0}^{\frac{q}{p}}} \zeta_n^{\frac{q}{p}} \right\} \\
\leq \begin{cases}
- \left(1 - \frac{1}{q} \right) \zeta_n & \text{ if } \zeta_n > \left( \frac{p}{\mu_{K_0}} \right)^{\frac{q}{p-q}} C_{K_0}^{\frac{p}{p-q}}, \\
- \left(1 - \frac{1}{q} \right) \left( \frac{\mu_{K_0}}{p} \right)^{\frac{q}{p(q-1)}} \frac{\zeta_n^{\frac{q(p-1)}{p(q-1)}}}{C_{K_0}^{\frac{1}{q-1}}} & \text{ if } \zeta_n \leq \left( \frac{p}{\mu_{K_0}} \right)^{\frac{q}{p-q}} C_{K_0}^{\frac{p}{p-q}},
\end{cases}
\end{multline}
Combining \cref{Lem:descent},~\eqref{proof_core},~\eqref{proof_conv_sharp}, and~\eqref{proof_conv_sharp_pnq}, we get
\begin{equation*}
\zeta_{n+1}
\leq \begin{cases}
\left( 1 - \tau \left(1 - \frac{1}{q} \right) \right) \zeta_n & \text{ if } \zeta_n > \left( \frac{p}{\mu_{K_0}} \right)^{\frac{q}{p-q}} C_{K_0}^{\frac{p}{p-q}}, \\
\zeta_n - \tau \left(1 - \frac{1}{q} \right) \left( \frac{\mu_{K_0}}{p} \right)^{\frac{q}{p(q-1)}} \frac{\zeta_n^{\frac{q(p-1)}{p(q-1)}}}{C_{K_0}^{\frac{1}{q-1}}} & \text{ if } \zeta_n \leq \left( \frac{p}{\mu_{K_0}} \right)^{\frac{q}{p-q}} C_{K_0}^{\frac{p}{p-q}}.
\end{cases}
\end{equation*}
Invoking \cref{Lem:recurrence} completes the proof of \cref{Thm:conv_sharp}.

\section{Triangle inequality-like properties of convex functionals}
\label{App:Triangle}
As presented in \cref{Ass:triangle}, the convergence analysis of nearly semicoercive problems introduced in this paper requires an assumption that each local energy functional satisfies a certain triangle inequality-like property.
To describe this triangle inequality-like property in detail, let $F \colon V \rightarrow \mathbb{R}$ be a G\^{a}teaux differentiable and convex functional defined on a Banach space $V$.
The property states that, for any bounded and convex subset $K$ of $V$, there exists a positive constant $C_{K, \mathrm{tri}}$ such that
\begin{equation}
\label{triangle-like}
d_F (w_1 + w_2; v) \leq C_{K, \mathrm{tri}} \left( d_F (w_1; v) + d_F (w_2; v) \right),
\quad v \in K, \text{ } w_1, w_2 \in V,
\end{equation}
where $d_F$ denotes the Bregman distance associated with $F$ given in~\eqref{d_F}.
In this appendix, we provide several examples of convex functionals that satisfy the triangle inequality-like property~\eqref{triangle-like}.

\begin{example}[quadratic functionals on Hilbert spaces]
\label{Ex:triangle_quadratic}
Let $H$ be a Hilbert space equipped with an inner product $( \cdot, \cdot )$.
We consider the quadratic functional previously given in~\eqref{semidefinite_energy}:
\begin{equation*}
F(v) = \frac{1}{2} ( Av, v ) - ( f, v ),
\quad v \in H,
\end{equation*}
where $A \colon H \rightarrow H$ is a continuous, symmetric and positive semidefinite linear operator, and $f \in H$.
The Bregman distance $d_F$ is given by
\begin{equation*}
d_F (w; v) = \frac{1}{2} ( A w, w ),
\quad v, w \in H.
\end{equation*}
Thanks to the Cauchy--Schwarz inequality, we can readily deduce that~\eqref{triangle-like} holds with $C_{K, \mathrm{tri}} = 2$. 
\end{example}

\begin{example}[smooth and strongly convex functionals]
Let $F$ be an $L$-smooth and $\mu$-strongly convex functional defined on a Banach space $V$, i.e.,
\begin{equation*}
\frac{\mu}{2} \| w \|^2 \leq d_F (w; v) \leq \frac{L}{2} \|w \|^2,
\quad v, w \in V.
\end{equation*}
for some $L, \mu > 0$.
Examples of such smooth and strongly convex functionals can be found in the literature, e.g.,~\cite{CP:2016,CHW:2020,TE:1998}.
For any $v, w_1, w_2 \in V$, it follows that
\begin{equation*}
d_F(w_1 + w_2; v) \leq \frac{L}{2} \| w_1 + w_2 \|^2
\leq L \left( \| w_1 \|^2 + \| w_2 \|^2 \right)
\leq \frac{2L}{\mu} \left( d_F (w_1; v) + d_F(w_2; v) \right),
\end{equation*}
which proves~\eqref{triangle-like}.
\end{example}

\begin{example}[$s$-Laplacian energy]
\label{Ex:triangle_sLap}
Let $\Omega$ be a bounded polyhedral domain in $\mathbb{R}^d$.
We consider the convex functional $F$, defined on $V = W^{1,s} (\Omega)$ with $s > 1$, which arises in variational formulations of $s$-Laplacian problems, as discussed in \cref{Sec:Applications}:
\begin{equation*}
F(v) = \frac{1}{s} \int_{\Omega} | \nabla v |^{s} \,dx,
\quad v \in V.
\end{equation*}
In~\cite[Lemma~3.3]{LP:2024}, it was proven that the Bregman distance $d_F(w; v)$ is equivalent, up to a multiplicative constant, to the squared quasi-norm $\| w \|_{(\nabla v)}^2$ introduced~\cite{BL:1993,LY:2001},  which is given by
\begin{equation*}
\| w \|_{(\nabla v)}^2 = \int_{\Omega} (| \nabla w | + | \nabla v |)^{s-2} | \nabla w |^2 \,dx,
\quad v, w \in V.
\end{equation*}
By leveraging this equivalence relation and the triangle-like property of the quasi-norm established in~\cite[Lemma~5.4]{LY:2001}, we can deduce that the functional $F$ satisfies the triangle-like property~\eqref{triangle-like}.
\end{example}

\bibliographystyle{siamplain}
\bibliography{refs_Schwarz_semi}
\end{document}